\newcommand{\N}{\mathbb{N}}
\newcommand{\E}{\mathbb{E}}
\newcommand{\calG}{\mathcal{G}}
\newcommand{\calA}{\mathcal{A}}
\newcommand{\normal}{\mathcal{N}}
\newcommand{\pdiffII}[3]{\ifstrequal{#2}{#3}
{\frac{\partial^2 #1}{\partial #2^2}}
{\frac{\partial^2 #1}{\partial #2 \partial #3}}
}
\newcommand{\diffII}[3]{\ifthenelse{\equal{#2}{#3}}
{\frac{d^2 #1}{d #2^2}}
{\frac{d^2 #1}{d #2 d #3}}
}
\newcommand{\eps}{\epsilon}
\let\Pr\relax
\DeclareMathOperator{\Pr}{Pr}
\DeclareMathOperator{\Var}{Var}
\DeclareMathOperator{\Cov}{Cov}
\DeclareMathOperator{\Ber}{Bernoulli}
\DeclareMathOperator{\Binom}{Binom}
\DeclareMathOperator{\HyperGeom}{HyperGeom}
\newcommand{\toP}{\stackrel{P}{\to}}
\newcommand{\eqD}{\stackrel{d}{=}}
\newcommand{\symdiff}{\Delta}
\newcommand{\noti}{\lnot i}
\begin{document}

\section{Introduction}

The ``planted bisection model'' is a random graph model with $2n$ vertices that
are divided into two classes with $n$ vertices each. Edges within the classes
are added to the graph independently with probability $p_n$ each, while edges
between the classes are added with probability $q_n$.
Following Bui et al,~\cite{BCLS:87} who studied a related model,
Dyer and Frieze~\cite{DF:89} introduced the planted bisection model
in order to study the average-case complexity of the
\textsc{Min-Bisection} problem, which asks for a bisection of a graph that cuts
the smallest possible number of edges. This problem is known to be
NP-complete in the worst case~\cite{Karp:72}, but on a random graph model with a
``planted'' small bisection one might hope that it is usually easy.
Indeed, Dyer and Frieze showed that if $p_n = p > q = q_n$ are fixed
as $n \to \infty$ then with high probability the bisection that separates the
two classes is the minimum bisection, and it can be found in expected $O(n^3)$
time.

These models were introduced slightly earlier in the statistics literature~\cite{HLL:83}
(under the name ``stochastic block model'')
in order to study the problem of community detection in random graphs.
Here, the two parts of the bisection are interpreted as latent ``communities'' in a
network, and the goal is to identify them from the observed graph structure.
If $p_n > q_n$,
the maximum a posteriori estimate of the true communities is exactly the same
as the minimum bisection (see the discussion leading to Lemma 4.1),
and so the community detection problem on a stochastic
block model is exactly the same as the \textsc{Min-Bisection}
problem on a planted bisection model; hence, we will use the statistical and computer science
terminologies interchangeably. We note, however, the statistics literature is slightly more
general, in the sense that it often
allows $q_n > p_n$, and sometimes relaxes the problem by allowing the detected communities
to contain some errors.

Our main contribution is a necessary and sufficient condition on
$p_n$ and $q_n$ for recoverability of the planted bisection.
When the bisection can be recovered, we provide an efficient algorithm
for doing so.

\section{Definitions and results}

\begin{definition}[Planted bisection model]
Given $n \in \N$ and $p, q \in [0, 1]$, we define the random $2n$-node
labelled graph $(G, \sigma) \sim \calG(2n, p, q)$ as follows:
first, choose a balanced labelling $\sigma$ uniformly at random
from $\{\tau \in \{1, -1\}^{V(G)}: \sum_u \tau_u = 0\}$.
Then, for every distinct pair $u, v \in V(G)$ independently,
add an edge between $u$ and $v$ with probability $p$ if $\sigma_u = \sigma_v$,
and with probability $q$ if $\sigma_u \ne \sigma_v$.
\end{definition}

The oldest and most fundamental question about planted partition models
is the label reconstruction problem: if we were given the graph $G$ but not
the labelling $\sigma$, could we reconstruct $\sigma$ (up to its sign) from
$G$? This problem is usually framed in the asymptotic regime, where
the number of nodes $n \to \infty$, and $p$ and $q$ are allowed to depend
on $n$.

\begin{definition}[Strong consistency]
 Given sequences $p_n$ and $q_n$ in $[0, 1]$, and given a map $\calA$
 from graphs to vertex labellings, we say that $\calA$ is
 \emph{strongly consistent} (or sometimes just \emph{consistent}) if
 \[
   \Pr_n(\calA(G) = \sigma \text{ or } \calA(G) = -\sigma) \to 1,
 \]
 where the probability $\Pr_n$ is taken with respect to
 $(G, \sigma) \sim \calG(2n, p_n, q_n)$.
\end{definition}

Depending on the application, it may also make sense to ask for a
labelling which is almost completely accurate, in the sense that
it correctly labels all but a vanishingly small fraction of nodes.
Amini et al.~\cite{ACBL:13} suggested the term ``weak consistency'' for this notion.

\begin{definition}[Weak consistency]
 Given $\sigma, \tau \in \{1, -1\}^{2n}$, define
 \[
  \Delta(\sigma, \tau) = 1 - \frac 1{2n} \left|\sum_{i=1}^{2n} \sigma_i \tau_i\right|.
 \]
  Given sequences $p_n$ and $q_n$ in $[0, 1]$, and given a map $\calA$
 from graphs to vertex labellings, we say that $\calA$ is
 \emph{weakly consistent} if
 \[
  \Delta(\sigma, \calA(G)) \toP 0,
 \]
 where ``$\toP$'' means convergence in probability, and
 the probability is taken with respect to
 $(G, \sigma) \sim \calG(2n, p_n, q_n)$.
\end{definition}

Our main result is a characterization of the sequences $p_n$ and $q_n$
for which consistent or weakly consistent estimators exist.
Note that the characterization of weak consistency was obtained previously
by Yun and Proutiere~\cite{YunProutiere:14}, but we include it here
for completeness.

\begin{definition}\label{def:P}
 Given $m$, $n$, $p$, and $q$, let
 \begin{align*}
   X &\sim \Binom(m, \max\{p,q\}) \\
   Y &\sim \Binom(n, \min\{p,q\}).
 \end{align*}
 We define
 \[
  P(m, n, p, q) = \Pr(Y \ge X).
 \]
 When $m = n$, we will abbreviate by $P(n, p, q) = P(n, n, p, q)$.
\end{definition}

\begin{theorem}[Characterization of consistency]\label{thm:consistency}
 Consider sequences $p_n$ and $q_n$ in $[0, 1]$.
 There exists a strongly consistent estimator for
 $\calG(2n, p_n, q_n)$ if and only if $P(n, p_n, q_n) = o(n^{-1})$.
 There exists a weakly consistent estimator for
 $\calG(2n, p_n, q_n)$ if and only if $P(n, p_n, q_n) \to 0$.
\end{theorem}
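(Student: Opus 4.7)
The plan is to interpret $P(n, p_n, q_n)$ as a statement about local neighborhoods. For a vertex $v$ with $\sigma_v = +1$, let $S_v$ and $O_v$ denote $v$'s number of neighbors in its own community and in the opposite one; conditionally on $\sigma$ these are independent $\Binom(n-1, p_n)$ and $\Binom(n, q_n)$ random variables, and assuming $p_n > q_n$, local majority fails at $v$ iff $O_v \ge S_v$, an event of probability $P(n-1, n, p_n, q_n) = (1 + o(1)) P(n, p_n, q_n)$. Call $v$ \emph{bad} when this happens; the expected number of bad vertices is $(2 + o(1)) n P$, and the whole proof organizes around this statistic.

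\textbf{Necessity.} The key observation is a likelihood-ratio computation: for any pair of opposite-label vertices $v, w$, the posterior probabilities of the true labeling $\sigma$ and of the swap labeling $\sigma^{(v,w)}$ given $G$ depend only on the edges incident to $\{v,w\}$ (the edge $\{v,w\}$ itself being invariant under the swap), and $\sigma^{(v,w)}$ has strictly higher posterior iff $S_v + S_w < O_v + O_w$. Any pair of bad vertices of opposite sign therefore produces a swap that beats the truth, so the MAP estimator---hence every estimator---misclassifies both. When $n P \to \infty$, a second-moment argument on the bad events (which are measurable with respect to the almost-disjoint edge neighborhoods of distinct vertices) shows that with probability $1 - o(1)$ each class contains bad vertices, precluding strong consistency. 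When only $P \not\to 0$, the genie lower bound gives $\Pr(\calA(G)_v \ne \sigma_v) \ge P$ for every $v$ and every estimator, so the expected fraction of errors is at least $P \not\to 0$; this prevents $\Delta(\sigma, \calA(G)) \to 0$ in probability and rules out weak consistency.

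\textbf{Sufficiency.} I would construct the three-stage algorithm mentioned in the abstract. Stage~1 (spectral) performs a regularized spectral clustering of $G$ to produce an initial labeling $\hat\sigma^{(1)}$ correct on a fraction $1 - \eps$ of the vertices. Stage~2 (replica) splits the edges of $G$ into two independent subgraphs with rescaled parameters, uses $\hat\sigma^{(1)}$ computed on one copy to re-vote each vertex via its neighborhood in the other, yielding $\hat\sigma^{(2)}$; this decouples the warm start from the votes, and standard concentration drives the error fraction to $o(1)$ as soon as $P \to 0$, establishing weak consistency. Stage~3 (cleanup) performs a single local-majority sweep against $\hat\sigma^{(2)}$: because $\hat\sigma^{(2)}$ agrees with $\sigma$ on all but $o(n)$ vertices, for each $v$ the sweep reproduces the genie's decision at $v$ up to a vanishing perturbation, and a union bound over $2n$ vertices gives exact recovery whenever $P = o(n^{-1})$.

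The hardest step is Stage~2. The edge split must preserve enough signal for the first copy to support Stage~1 and the second to support reliable per-vertex voting; in the sparse regime this entangles with Stage~1, since the raw adjacency spectrum is dominated by high-degree vertices and some truncation or non-backtracking regularization is needed before the principal eigenvector carries community information. On the necessity side, the main technical point is the second-moment estimate showing that bad vertices come in pairs of opposite signs with probability $1 - o(1)$ as soon as $n P \to \infty$; the Stage~3 cleanup analysis and the marginal genie bound are then comparatively routine.
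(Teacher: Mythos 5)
Your high-level outline matches the paper's strategy: a MAP swap argument for necessity and a three-stage spectral/replica/local-majority algorithm for sufficiency. But the two places where you wave your hands are exactly the places where the paper has to work, and as written both gaps are fatal.

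\textbf{Sufficiency.} You claim Stage~2 delivers ``error fraction $o(1)$'' and that a single local-majority sweep plus a union bound over $2n$ vertices then yields exact recovery when $P = o(n^{-1})$. This does not follow. With merely $o(n)$ mislabelled vertices entering Stage~3, a given vertex $v$ could have a nontrivial fraction of its neighborhood mislabelled, in which case the local-majority vote at $v$ is corrupted and the union bound has nothing to sum. What the paper actually proves (Proposition~\ref{prop:first-step}) is much stronger: after the replica step, the set of possibly-mislabelled vertices is contained in $V_\epsilon$, the set of nodes whose true majority is smaller than $\epsilon\sqrt{np\log n}$ or whose degree is abnormally large. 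This set is shown to have size $n^{o(1)}$ (Proposition~\ref{prop:Ve-size}) and, in the sparse regime, to be essentially an independent set in which no vertex has two neighbors (Propositions~\ref{prop:Ve-neighbors} and~\ref{prop:Ve-adjacent}). That structural control is what makes the hill-climb in Algorithm~\ref{alg:second} correct: a node's local-majority vote is perturbed by at most one edge into the error set, and any node so fragile that a one-edge perturbation flips it is itself in $V_\epsilon$ and therefore has no bad neighbors. Your proposal never identifies $V_\epsilon$ or anything like it, so your Stage~3 analysis is missing its main ingredient. (Your edge-splitting replica is a legitimate alternative to the paper's vertex-hold-out, but it would need exactly the same structural conclusion, not just an $o(n)$ error bound.)

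\textbf{Necessity.} Your swap argument is the paper's Lemma~\ref{lem:MAP-minority}, and the second-moment control of bad vertices is in the spirit of Lemma~\ref{lem:minority-covariance}. But two things are missing. First, the negation of $P = o(n^{-1})$ is only that $nP$ is bounded away from zero along a subsequence, not that $nP\to\infty$; you only treat the latter, and in the bounded case the second moment gives a constant (not $1-o(1)$) probability of a bad vertex, which is what the paper then propagates. Second, to defeat the MAP you need a bad $+$-vertex \emph{and} a bad $-$-vertex simultaneously, and a second-moment bound on the total count of bad vertices does not by itself place them in both classes with constant probability. The paper handles this with Harris's (FKG) inequality: the events $\{\exists\text{ bad }+\text{-vertex}\}$ and $\{\exists\text{ bad }-\text{-vertex}\}$ are both increasing in the cross-edges and decreasing in the within-class edges, hence nonnegatively correlated. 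You need this or some substitute. Finally, your ``genie lower bound'' for weak-consistency necessity is stated for a single vertex $v$, but in this model the prior is uniform over \emph{balanced} labellings, so conditioning on $\sigma_{-v}$ determines $\sigma_v$; the genie must withhold a matched pair of opposite-sign vertices, which changes the bound slightly (it is fixable, but not the one-line statement you gave).
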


In order to provide some intuition for Definition~\ref{def:P} and
its appearance in our characterization, we note the following graph-theoretic
interpretation of $P(n, p, q)$:

\begin{definition}\label{def:majority}
 Given a labelled graph $(G, \sigma) \sim \calG(2n, p, q)$ and a node
 $v \in V(G)$, we say that $v$ has a majority of size $k$ if either
 \[
  p > q \text{ and }
  \#\{u \sim v: \sigma_u = \sigma_v\} \ge \#\{u \sim v: \sigma_u \ne \sigma_v\} + k
 \]
 or
 \[
  p < q \text{ and }
  \#\{u \sim v: \sigma_u \ne \sigma_v\} \ge \#\{u \sim v: \sigma_u = \sigma_v\} + k.
 \]
 We say that $v$ has a majority if it has a majority of size one.
 If $v$ does not have a majority, we say that it has a minority.
\end{definition}

\begin{proposition}\label{prop:majorities}
 Fix sequences $p_n$ and $q_n$ in $[0, 1]$ and let $(G, \sigma) \sim
 \calG(n, p_n, q_n)$. Then
 \begin{itemize}
 \item $P(n, p_n, q_n) = o(n^{-1})$ if and only if
 a.a.s.\ every $v \in V(G)$ has a majority; and
 \item $P(n, p_n, q_n) \to 0$ if and only if
 a.a.s.\ at most $o(n)$ nodes in $V(G)$ fail to have a majority.
 \end{itemize}
\end{proposition}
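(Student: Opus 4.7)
The plan is to apply the first and second moment methods to $N$, the number of vertices that fail to have a majority. The quantity $P(n, p_n, q_n)$ will play the role (up to constants in the relevant asymptotic regimes) of the single-vertex minority probability.

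First, for a fixed vertex $v$, the numbers of its same-community and cross-community neighbors are independent binomials, so $\Pr(v \text{ has a minority})$ equals a value of $P$ with parameters close to $n$; by a routine coupling between $\Binom(n, p)$ and $\Binom(n \pm O(1), p)$, this has the same order as $P(n, p_n, q_n)$ in the relevant senses (both tend to $0$ together, and both are $o(n^{-1})$ together). Hence $\E[N]$ is of order $n P(n, p_n, q_n)$, and both forward directions follow from Markov: $P(n,p_n,q_n) = o(n^{-1})$ gives $\Pr(N \ge 1) \to 0$, while $P(n,p_n,q_n) = o(1)$ gives $\Pr(N \ge \varepsilon n) \to 0$ for every fixed $\varepsilon > 0$.

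The converse of the weak-consistency statement is also easy. If $P(n,p_n,q_n) \not\to 0$, then on a subsequence $\E[N] \ge cn$; since $N$ is bounded by the total number of vertices, applying Markov to (total) $- N$ gives $\Pr(N \ge cn/2)$ bounded away from $0$, contradicting a.a.s.\ concentration of $N$ near $0$. The converse of the strong-consistency statement is the main work. Suppose $P(n,p_n,q_n) \ne o(n^{-1})$; on a subsequence $\E[N] \ge c > 0$, and I would use the Paley--Zygmund inequality $\Pr(N \ge 1) \ge (\E N)^2/\E[N^2]$. Writing $N = \sum_v X_v$, the key structural observation is that for $u \ne v$, conditional on $\sigma$ and on the edge indicator $e_{uv}$, the events $\{X_u = 1\}$ and $\{X_v = 1\}$ are independent, because their remaining randomness lies on disjoint sets of edge variables. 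A direct computation then gives
\[
\Cov(X_u, X_v \mid \sigma) = \bigl(f_u(1) - f_u(0)\bigr)\bigl(f_v(1) - f_v(0)\bigr) \cdot p_{uv}(1 - p_{uv}),
\]
where $f_w(k) = \Pr(X_w = 1 \mid e_{uv} = k, \sigma)$ and $p_{uv} \in \{p_n, q_n\}$. Each factor $|f_w(1) - f_w(0)|$ is a point probability $\Pr(Z = k)$ of a drift-negative difference $Z$ of independent binomials, for some small integer $k$; by a standard local-limit estimate this is bounded by $O(\Pr(Z \ge 0)) = O(p_n^\ast)$, where $p_n^\ast := \Pr(X_v = 1)$. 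Granting this, $|\Cov(X_u, X_v)| = O((p_n^\ast)^2)$, so $\E[N^2] \le \E[N] + O((\E N)^2)$, and Paley--Zygmund yields $\Pr(N \ge 1)$ bounded below along the subsequence.

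The main obstacle is making the local-limit bound on $|f_w(1) - f_w(0)|$ uniform in $(p_n, q_n)$: one must control $\Pr(Z = k)$ for bounded $k$ in terms of $\Pr(Z \ge 0)$ where $Z$ is a drift-negative binomial difference. The heuristic is that, after exponentially tilting $Z$ to be mean-zero, point probabilities $\Pr(Z = k)$ at nearby integers $k$ are comparable; this is straightforward in the CLT ``bulk'' (where $Z$ is approximately Gaussian) and in the moderate/large-deviation regime (where a saddle-point/Chernoff analysis gives $\Pr(Z = k) = O(\Pr(Z = 0))$ for bounded $k$), but it must be combined carefully to cover all admissible $(p_n, q_n)$.
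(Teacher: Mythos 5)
Your proposal follows the same overall blueprint as the paper: first--moment (Markov) for the sufficiency directions, reverse--Markov for the converse of the weak statement, and Paley--Zygmund with a conditional--independence covariance computation for the converse of the strong statement. Your covariance identity $\Cov(X_u,X_v)=p_{uv}(1-p_{uv})(f_u(1)-f_u(0))(f_v(1)-f_v(0))$ is exactly the paper's $\Cov(A,B)\le\Var(\Pr(A\mid\xi))$ after expansion, and the ``routine coupling'' you invoke to relate $P(n-1,n,p,q)$ to $P(n,p,q)$ for the first--moment steps is indeed routine (the inequality $(1-p)\Pr(Y\ge X')\le\Pr(Y\ge X)\le\Pr(Y\ge X')$ suffices there). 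So far so good.

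The genuine gap is the claimed local--limit bound $|f_w(1)-f_w(0)|=\Pr(Z=k)=O(\Pr(Z\ge 0))=O(p_n^*)$, which you flag as ``the main obstacle'' but frame as a matter of carefully combining CLT and moderate--deviation regimes. In fact the bound is simply \emph{false} at the bottom of the admissible range, and no amount of care in the saddle--point analysis can rescue it. The paper points out the counterexample right after its Proposition \ref{prop:large-majority}: with $p=m^{-1}\log m$ and $q=0$, one has $\Pr(Y\ge X)=\Pr(X=0)\sim m^{-1}$ while $\Pr(Y\ge X-1)=\Pr(X\le 1)\sim m^{-1}\log m$, so $\Pr(Z=-1)/\Pr(Z\ge 0)\sim\log m$, a genuinely unbounded ratio. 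The same $\log^{C}n$ excess appears throughout the sparse regime $np=\Theta(\log n)$, which is exactly the critical regime for this problem. Thus with your stated bound the Paley--Zygmund denominator is off by a polylogarithmic factor and the argument does not close.

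The repair is not analytic sharpening but a structural case split, which is what the paper's Lemma \ref{lem:minority-covariance} does. When $p\ge n^{-1/2}$, Proposition \ref{prop:large-majority} with $\ell=1$ gives $|f_w(1)-f_w(0)|=O(n^{-1/6}p_n^*+n^{-2})$, and $p_{uv}(1-p_{uv})\le 1$ already gives $|\Cov|=O(n^{-1/3}(p_n^*)^2+n^{-4})$. When $p<n^{-1/2}$, one must accept the weaker bound $|f_w(1)-f_w(0)|=O(p_n^*\log^{C}n+n^{-2})$ from Proposition \ref{prop:large-majority-sparse}, but then use the factor $p_{uv}(1-p_{uv})\le p\le n^{-1/2}$ that you carry along but never exploit: this yields $|\Cov|\le n^{-1/2}\log^{2C}n\,(p_n^*)^2+O(n^{-4})=o((p_n^*)^2)+O(n^{-4})$. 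In both cases the covariance bound is $O((p_n^*)^2)$ plus an $O(n^{-4})$ additive term (which, after summing over $n^2$ pairs, contributes $O(n^{-2})\ll(\E N)^2$ and is harmless). So the fix is to carry the $p_{uv}$ factor through, split on $p\gtrless n^{-1/2}$, and accept a polylog loss in the single--vertex perturbation estimate in the sparse branch — not to make that perturbation estimate uniformly $O(1)$, which is impossible.
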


Proposition~\ref{prop:majorities} suggests some intuition
for Theorem~\ref{thm:consistency}: namely, that a node can be labelled
correctly if and only if it has a majority.
In fact, having a majority is necessary for correct labelling
(and we will use this to prove one direction of Theorem~\ref{thm:consistency});
however, it is not sufficient. For example, there are regimes in which
51\% of nodes have majorities, but only 50\% of them can be correctly labelled
(see~\cite{MoNeSl:13}).

We note that Theorem~\ref{thm:consistency} has certain
parallels with local-to-global threshold phenomena in random graphs. For example,
Erd\H{o}s and R\'enyi showed~\cite{ErdosRenyi:61} that for $\calG(n, p_n)$,
if $p_n$ is large enough so that with high probability every node has a neighbor
then the graph is connected with high probability. On the other hand,
every node having a neighbor is clearly necessary for the graph to be
connected. An analogous story holds for the existence of Hamiltonian cycles:
Koml\'os and Szemer\'edi~\cite{KomlosSzemeredi:83} showed that $\calG(n, p_n)$
has a Hamiltonian cycle with high probability if and only if with high probability
every node has degree at least two.

These results on connectedness and Hamiltonicity have a feature in common: in both
cases, an obviously necessary local condition turns out to also be sufficient (on
random graphs) for a global condition.
One can interpret Theorem~\ref{thm:consistency} similarly:
the minimum bisection in $\calG(n, p_n, q_n)$ equals the planted bisection with high
probability
if and only if with high probability every node
has more neighbors of its own label than those of the other label.

\subsection{The algorithm}
In order to prove the positive direction of Theorem~\ref{thm:consistency},
we provide an algorithm that recovers the planted bisection
with high probability whenever $P(n, p_n, q_n) = o(n^{-1})$.
Moreover, this algorithm runs in time $\tilde O(n^2 (p_n + q_n))$, where $\tilde O$ hides
polylogarithmic factors. That is, it runs in time that is almost linear in the number
of edges. In addition, we remark that the algorithm does not need to know $p_n$ and $q_n$.
For simplicity, we assume that we know whether $p_n > q_n$ or vice versa, but this can be checked
easily from the data (for example, by checking the sign of the second-largest-in-absolute-value
eigenvalue of the adjacency matrix; see Section~\ref{sec:spectral}).

Our algorithm comes in three steps, each of which is based on an idea that has already
appeared in the literature. Our first step is a spectral algorithm, along the lines of those
developed by Boppana~\cite{B:87}, McSherry~\cite{M:01}, and Coja-Oghlan~\cite{CO:10}.
Yun and Proutiere~\cite{YunProutiere:14} recently made some improvements to (a special case of) Coja-Oghlan's work,
showing that a spectral algorithm can find a bisection with $o(n)$ errors
if $n \frac{(p_n - q_n)^2}{p_n + q_n} \to \infty$; this is substantially weaker than McSherry's condition for strong consistency, which would
require converging to infinity with a rate of at least $\log n$.

The second stage of our algorithm is to apply a ``replica trick.'' We hold out a small subset $U$ of vertices
and run a spectral algorithm on the subgraph induced by $V \setminus U$. Then we label vertices in $U$ by examining
the edges between $U$ and $V \setminus U$. By repeating the process for many subsets $U$, we dramatically
reduce the number of errors made by the spectral algorithm. More importantly, we get extra information
about the structure of the errors; for example, we can show that the set of incorrectly-labelled vertices
is very poorly connected. Similar ideas are used by Condon and Karp~\cite{CK:01}, who used successive augmentation
to build an initial guess on a subset of vertices, and then used that guess to correctly classify the
remaining vertices. The authors~\cite{MoNeSl:14b} also used a similar idea in the $p_n, q_n = \Theta(n^{-1})$
regime, with a more complicated replica trick based on belief propagation.

The third step of our algorithm is a hill-climbing algorithm, or a sequence of local improvements.
We simply relabel vertices so that they agree with the majority of their neighbors.
An iterative version of this procedure was
considered in~\cite{CI:01}, and a randomized version (based on simulated annealing) was studied by
Jerrum and Sorkin~\cite{JS:98}. Our version has better performance guarantees because we
begin our hill-climbing just below the summit: as we will show, we need to relabel only a tiny fraction
of the vertices and each of those will be relabelled only once.

As noted above, none of the ingredients in our algorithm are novel on their own. However,
the way that we combine them is new (and also crucial to the correctness
of the resulting algorithm). For example, McSherry~\cite{M:01} used a spectral
algorithm with a ``clean-up'' stage, but his clean-up stage was different from our
second and third stages.

\subsection{Formulas in terms of $p_n$ and $q_n$}\label{sec:explicit}
Although Theorem~\ref{thm:consistency} is not particularly explicit
in terms of $p_n$ and $q_n$,
one can obtain various explicit characterizations
in particular regimes (for example, in order to better compare our results
with the existing literature).
We will focus our attention on the case where $p_n$ and $q_n$ are bounded
away from one; for concreteness, suppose $p_n, q_n \le 2/3$. Because of the
symmetry of the problem, this case suffices: 
indeed, replacing $G \sim \calG(n, p_n, q_n)$ by
its complement (the graph in which two vertices are connected if they are
not connected in $G$) corresponds to replacing $p_n$ by $1-p_n$ and $q_n$
by $1-q_n$. Hence, if we handle the case $p_n, q_n \le 2/3$ then we also
handle the case $p_n, q_n \ge 1/3$. There remains the case in which
$\min\{p_n, q_n\} \le 1/3$ and $2/3 \le \max\{p_n, q_n\}$, but this case is
trivial: $P(n, p_n, q_n)$ decreases exponentially fast in $n$, and even very
simple algorithms are known to be strongly consistent.

One can easily see that to obtain strong consistency, at least one of
$p_n$ or $q_n$ must be at least $n^{-1} \log n$ asymptotically. Indeed,
suppose $q_n \le p_n = n^{-1} \log n$ and let $X \sim \Binom(n, p_n)$, $Y \sim \Binom(n, q_n)$. Then $\Pr(X = 0) = \Theta(n^{-1})$, and so
certainly $P(n, p_n, q_n) = \Pr(Y \ge X) = \Omega(n^{-1})$,
which means that strong consistency is impossible for these parameters.
However, strong consistency is possible for some other parameters
in the range $\Theta(n^{-1} \log n)$. Using a Poisson approximation,
we can characterize explicitly which of these sequences allow for
strong consistency:
\begin{proposition}\label{prop:explicit-sparse}
    Let $p_n = a_n n^{-1} \log n$ and $q_n = b_n n^{-1} \log n$.
    If there is a constant $C$ such that $C^{-1} \le a_n, b_n \le C$ for all but
    finitely many $n$ then $P(n, p_n, q_n) = o(n^{-1})$ if and only if
\[
 (a_n + b_n - 2 \sqrt{a_n b_n} - 1) \log n + \frac 12 \log \log n \to
 \infty.
\]
\end{proposition}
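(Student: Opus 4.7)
The plan is to reduce to sharp two-sided asymptotics for $\Pr(Y \geq X)$ when $X,Y$ are independent Poisson random variables. Assume without loss of generality that $p_n \geq q_n$, and set $\mu_n = n p_n = a_n \log n$ and $\nu_n = n q_n = b_n \log n$. Since $n p_n^2 = O((\log n)^2/n) \to 0$, a pointwise comparison of binomial and Poisson probability mass functions (uniformly for $k = O(\log n)$) shows that replacing $X \sim \Binom(n,p_n)$ by $\tilde X \sim \Pois(\mu_n)$ and $Y \sim \Binom(n,q_n)$ by $\tilde Y \sim \Pois(\nu_n)$ alters $\Pr(Y \geq X)$ by a factor $1 + o(1)$. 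So it suffices to analyse $\Pr(\tilde Y \geq \tilde X)$.

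For that, I would start from
\begin{equation*}
\Pr(\tilde Y \geq \tilde X) \;=\; \sum_{k \geq 0} \Pr(\tilde X = k)\, \Pr(\tilde Y \geq k)
\end{equation*}
and apply Stirling's formula. Writing the summand as an exponential, its maximum lies at the saddle point $k^{*}_n = \sqrt{\mu_n \nu_n}$, where the exponent evaluates to $-(\sqrt{\mu_n}-\sqrt{\nu_n})^2 = -(\sqrt{a_n}-\sqrt{b_n})^2 \log n$. The second derivative of the log of the summand at $k^{*}_n$ is of order $1/\log n$, so a Laplace approximation over a window of width $O(\sqrt{\log n})$ around $k^{*}_n$ (with Chernoff bounds handling the tails) produces the sharp estimate
\begin{equation*}
\Pr(\tilde Y \geq \tilde X) \;=\; \Theta\bigl((\log n)^{-1/2}\bigr) \cdot n^{-(\sqrt{a_n}-\sqrt{b_n})^2},
\end{equation*}
with implicit constants bounded away from $0$ and $\infty$ because $a_n, b_n \in [C^{-1}, C]$.

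Taking logarithms and using $(\sqrt{a_n}-\sqrt{b_n})^2 = a_n + b_n - 2\sqrt{a_n b_n}$, the condition $P(n,p_n,q_n) = o(n^{-1})$ becomes
\begin{equation*}
(a_n + b_n - 2\sqrt{a_n b_n} - 1)\log n \;+\; \tfrac{1}{2}\log\log n \;\to\; \infty,
\end{equation*}
which is the stated criterion. The ``only if'' and ``if'' directions both follow from the matching two-sided bound above.

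The main obstacle is the sharpness of the prefactor: a bare Chernoff upper bound gives only $P(n,p_n,q_n) \leq n^{-(\sqrt{a_n}-\sqrt{b_n})^2 + o(1)}$, which misses the $\tfrac{1}{2}\log\log n$ term. Recovering the correct $(\log n)^{-1/2}$ correction requires the Laplace step described above, which is routine but depends on noting that the curvature of the relevant log-likelihood at $k^{*}_n$ scales like $1/\log n$ (not like $1/n$). Equivalently, one can realise $\tilde X, \tilde Y$ as sums of $\lfloor \log n \rfloor$ i.i.d.\ Poissons with bounded parameters and invoke a standard Bahadur--Rao theorem for i.i.d.\ sums; in either formulation the uniform bound $C^{-1} \leq a_n, b_n \leq C$ is used to keep the implicit constants in the prefactor bounded and so absorbable into the ``$\to \infty$'' condition.
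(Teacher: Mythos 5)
Your approach is close in spirit to the paper's, but uses a genuinely different decomposition: you sum over the value of $\tilde X$ and use $\Pr(\tilde Y \geq \tilde X) = \sum_k \Pr(\tilde X = k)\Pr(\tilde Y \geq k)$, whereas the paper conditions on $Z = X+Y$ and writes $\Pr(Y \geq X) = \sum_k \Pr(Z=k)\Pr(Y \geq X \mid Z=k)$. The paper's choice is cleaner because $\Pr(Y \geq X \mid Z = k)$ is a $\Binom(k,\eta)$ tail with $\eta = b/(a+b)$ bounded away from $1/2$, which decays geometrically, so the summand is easy to estimate uniformly; your decomposition leaves the factor $\Pr(\tilde Y \geq k)$, whose comparability to $\Pr(\tilde Y = k)$ depends delicately on where $k$ sits relative to $\nu_n$.

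That dependence is the source of a real gap. Your claimed two-sided estimate
\[
\Pr(\tilde Y \geq \tilde X) = \Theta\bigl((\log n)^{-1/2}\bigr)\, n^{-(\sqrt{a_n}-\sqrt{b_n})^2},
\]
with implicit constants controlled solely by $a_n, b_n \in [C^{-1}, C]$, is false when $a_n$ and $b_n$ are close. For instance, if $a_n = b_n$ the formula predicts $\Theta((\log n)^{-1/2})$, but by symmetry $\Pr(\tilde Y \geq \tilde X) = \tfrac12(1 + \Pr(\tilde Y = \tilde X)) \to \tfrac12$. The reason is that the saddle point $k^* = \sqrt{\mu_n\nu_n}$ lies strictly above $\nu_n$ only when $a_n/b_n$ is bounded away from $1$; near equality, $k^*$ approaches the mode of $\tilde Y$, the approximation $\Pr(\tilde Y \geq k^*) \asymp \Pr(\tilde Y = k^*)$ fails, and the Laplace analysis breaks down. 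Equivalently, your Bahadur--Rao reformulation requires $\E[\tilde Y - \tilde X]$ to be bounded away from zero (a genuine large-deviation event); when $a_n - b_n$ is small you are in a moderate-deviation or CLT regime where the Bahadur--Rao prefactor is wrong. The paper preempts this by first arguing directly that if $a_n - b_n \leq \epsilon$ then $P(n,p_n,q_n)$ is not $o(n^{-1})$, and separately that the target expression does not tend to $+\infty$, so the equivalence holds vacuously there; only then does it run the saddle-point analysis, with the lower bound $a_n - b_n > \epsilon$ guaranteeing $\eta$ bounded away from $1/2$. You need the same reduction before your estimate becomes valid, and you should state which constraint on $a_n, b_n$ actually controls your constants: it is $|a_n - b_n|$ bounded below, not merely $a_n, b_n \in [C^{-1}, C]$.
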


In a denser regime, it is tempting to approximate $\Binom(n,p_n)$ and $\Binom(n,q_n)$ by the normal random variables
$\normal(n p_n, n \sigma_p^2)$ and $\normal(n q_n, n \sigma_q^2)$, where
$\sigma_p = \sqrt{p(1-p)}$ and $\sigma_q = \sqrt{q(1-q)}$. That is,
\begin{align*}
\Pr(Y \geq X) &\approx \Pr(\normal(n p_n, n \sigma_p^2) \geq \normal(n q_n, n \sigma_q^2)) \\
&= \Pr(\sigma_p \normal(0,1) \geq \sqrt{n}(q_n - p_n) + \sigma_q \normal(0,1)) \\ &=
\Pr(\normal(0,1) \geq \sigma^{-1} \sqrt{n}(q_n - p_n)),
\end{align*}
where $\sigma = \sqrt{\sigma_p^2 + \sigma_q^2}$. The central limit theorem implies that
the normal approximation is correct in the bulk of the distribution if $np_n \to \infty$ and
$n q_n \to \infty$. However, we are interested in applying this approximation for the tail,
which requires a faster increase of $n p_n$ and a more delicate argument.

\begin{proposition}\label{prop:explicit-dense}
Suppose
$
p_n,q_n = \omega\left( n^{-1} \log^3 n \right)$
and $p_n, q_n \leq 2/3.$
Then the following conditions are equivalent
\begin{itemize}
\item
$P(n, p_n, q_n) = o(1/n)$
\item
$n \Pr\left(\normal(0,1) \geq \sigma_n^{-1} \sqrt{n}(p_n - q_n)\right) \to 0$
\item
$\frac{\sqrt{n} \sigma_n}{p_n-q_n} \exp(-\frac{n (p_n - q_n)^2}{2 \sigma_n^2}) \to 0$,
\end{itemize}
where $\sigma_n = \sqrt{p_n(1-p_n) + q_n(1-q_n)}$.
\end{proposition}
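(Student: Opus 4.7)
The plan is to establish (2) $\Leftrightarrow$ (3) via a pure Gaussian tail computation, and (1) $\Leftrightarrow$ (2) via a saddle-point / moderate-deviation analysis of $\Pr(Y \geq X)$. Throughout I assume without loss of generality $p_n > q_n$ (the cases $p_n = q_n$ being degenerate and $p_n < q_n$ being symmetric), and set $t_n = \sigma_n^{-1}\sqrt{n}(p_n-q_n)$.

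For (2) $\Leftrightarrow$ (3), observe the algebraic identity $\sqrt{n}\,\sigma_n/(p_n-q_n) = n/t_n$, so condition (3) reads $(n/t_n) e^{-t_n^2/2} \to 0$. Mills' ratio gives $\Pr(\normal(0,1)\geq t) = (1+o(1))(t\sqrt{2\pi})^{-1} e^{-t^2/2}$ as $t\to\infty$; if $t_n$ stayed bounded, both (2) and (3) would trivially fail, so we may assume $t_n\to\infty$, and then (2) becomes $n(t_n\sqrt{2\pi})^{-1} e^{-t_n^2/2}\to 0$, which is condition (3) up to the constant $\sqrt{2\pi}$.

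For (1) $\Leftrightarrow$ (2), decompose $X-Y = \sum_{i=1}^n D_i$ where $D_i$ are i.i.d.\ on $\{-1,0,1\}$ with $\Pr(D_i=1)=p_n(1-q_n)$, $\Pr(D_i=-1)=q_n(1-p_n)$, mean $p_n-q_n$, and variance $\sigma_n^2$. The goal is the sharp asymptotic
\[
P(n,p_n,q_n) \;=\; \Pr\Bigl(\textstyle\sum_i D_i \leq 0\Bigr) \;=\; (1+o(1))\,\Pr(\normal(0,1)\geq t_n),
\]
which immediately yields (1) $\Leftrightarrow$ (2). I would prove this by exponential tilting: solving $\Lambda'(-s)=0$ for the log-MGF $\Lambda$ of $D_1$ gives the tilt $s^* = \frac12\log(p_n(1-q_n)/(q_n(1-p_n)))$, under which the law of $D_i$ is symmetric on $\{-1,0,1\}$ and the Cram\'er rate function at $0$ evaluates to the Bhattacharyya-type quantity $I(0) = -2\log(\sqrt{p_nq_n} + \sqrt{(1-p_n)(1-q_n)})$. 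Taylor expanding in $(p_n-q_n)$ around the symmetric point gives $I(0) = \frac{(p_n-q_n)^2}{2\sigma_n^2}\bigl(1 + O((p_n-q_n)^2/\sigma_n^2)\bigr)$, so $nI(0) = t_n^2/2 + O(t_n^4/n)$, and $t_n^4/n = o(1)$ follows from (3) (which forces $t_n = O(\sqrt{\log n})$). A Bahadur-Ranga Rao saddle-point expansion then contributes the polynomial prefactor $(s^*\sqrt{2\pi n\Lambda''(-s^*)})^{-1}$; since $s^* = (p_n-q_n)/\sigma_n^2 \cdot (1+o(1))$ and $\Lambda''(-s^*) = \sigma_n^2(1+o(1))$, this prefactor equals $(t_n\sqrt{2\pi})^{-1}(1+o(1))$, matching the Mills-ratio asymptotic for $\Pr(\normal(0,1)\geq t_n)$.

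The main obstacle is controlling the Cram\'er series / Edgeworth correction in the saddle-point expansion uniformly as $p_n,q_n$ may tend to zero. The tilted $D_i$ have standardized third cumulant of order $1/\sigma_n$, so the Edgeworth correction to the Gaussian tail at deviation level $t_n$ is heuristically of order $t_n^3/(\sigma_n\sqrt{n})$. The hypothesis $p_n,q_n = \omega(n^{-1}\log^3 n)$ is calibrated so that, combined with $t_n = O(\sqrt{\log n})$, this correction stays $o(1)$; verifying it quantitatively, most likely via a careful application of the classical Bahadur-Ranga Rao theorem as formulated in Petrov's \emph{Sums of Independent Random Variables}, is the technical heart of the proof. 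I would split the analysis into the bulk contribution coming from a small neighborhood of the saddle, handled by a local CLT for the tilted walk, and the complementary contribution bounded by a routine Chernoff estimate.
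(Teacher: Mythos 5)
Your route — writing $Y-X = -\sum_i D_i$ for i.i.d.\ $D_i$ on $\{-1,0,1\}$, tilting to the saddle $s^*$, and invoking a Bahadur--Ranga Rao expansion — is genuinely different from the paper's. The paper instead proves a local CLT for $X$ and $Y$ separately, replaces the truncated double sum $\sum_{k,\ell}\Pr(X=k)\Pr(Y=\ell)\1{k\le\ell}$ by a Gaussian integral over a unit-area parallelogram tiling, and reads off $\Pr(\normal(0,1)\ge t_n)$ directly; the two-dimensional LCLT does all the work uniformly in $t_n$. Your one-dimensional reduction is cleaner in spirit, and the computations of the Cram\'er exponent $I(0) = -2\log\bigl(\sqrt{p q}+\sqrt{(1-p)(1-q)}\bigr)$ and the saddle $s^*$ are correct. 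But there is a concrete gap: condition (3) is a \emph{lower}-bound-type constraint on $t_n$ and does \emph{not} force $t_n = O(\sqrt{\log n})$. For instance $p_n = 1/2$, $q_n = 1/2 - n^{-1/4}$ satisfies the hypotheses and (3), yet has $t_n \asymp n^{1/4}$. Your estimate $nI(0) = t_n^2/2 + O(t_n^4/n)$ therefore fails in general, and indeed the sharp asymptotic $P(n,p_n,q_n) = (1+o(1))\Pr(\normal(0,1)\ge t_n)$ is simply false on the large-deviation scale. The repair is a case split: if $t_n \ge C\sqrt{\log n}$ for a suitable constant, the Chernoff bound $P \le e^{-nI(0)}$ together with $nI(0) \ge c\,t_n^2$ (which follows from $I(0) \ge 2 H^2(\Ber(p),\Ber(q)) \ge c(p-q)^2/\sigma_n^2$, using $p,q\le 2/3$) shows that (1) and (2) hold trivially; only in the moderate regime $t_n \le C\sqrt{\log n}$ is the sharp asymptotic needed, and there your Taylor expansion of $I(0)$ is valid.

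Two smaller points. The tilted $D_i$ are exactly symmetric on $\{-1,0,1\}$ (that is precisely what the choice of $s^*$ achieves), so the third cumulant of the tilted increment vanishes; the leading Edgeworth correction is governed by the fourth cumulant and is of order $1/(n\sigma_n^2)$, not $1/\sigma_n$, which is why the hypothesis $p_n,q_n = \omega(n^{-1}\log^3 n)$ suffices. And the uniform Bahadur--Ranga Rao expansion over a triangular array with $p_n, q_n$ possibly tending to $0$ is not an off-the-shelf citation of Petrov, whose statements fix the increment law; establishing that uniformity is essentially the same work as the paper's own local CLT lemma, and it is the step you correctly label the technical heart but do not carry out.
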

In particular, the third condition in Proposition~\ref{prop:explicit-dense} 
gives an explicit formula for checking whether a strongly consistent estimator
exists.

The formula for weak consistency is rather simpler:
\begin{proposition}\label{prop:weak-consistency}
 $P(n, p_n, q_n) \to 0$ if and only if
 $\frac{n(p_n - q_n)^2}{p_n + q_n} \to \infty$.
\end{proposition}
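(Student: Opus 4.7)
The plan is to analyze $D_n := X_n - Y_n$ where $X_n \sim \Binom(n, \max\{p_n,q_n\})$ and $Y_n \sim \Binom(n, \min\{p_n,q_n\})$; write $p = p_n$, $q = q_n$, WLOG with $p \ge q$. The sufficient direction is immediate from Chebyshev's inequality: since $E[D_n] = n(p-q)$ and $\Var(D_n) = n(p(1-p) + q(1-q)) \le n(p+q)$,
\[
 P(n,p_n,q_n) \;=\; \Pr(D_n \le 0) \;\le\; \frac{\Var(D_n)}{(E[D_n])^2} \;\le\; \frac{p+q}{n(p-q)^2},
\]
which tends to $0$ under the hypothesis $n(p_n-q_n)^2/(p_n+q_n) \to \infty$.

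For the necessary direction I would argue contrapositively: pass to a subsequence along which $n(p-q)^2/(p+q)$ stays bounded by some constant $C$, and produce a positive lower bound on $P(n,p_n,q_n)$. Using the complement symmetry noted in Section~\ref{sec:explicit}, reduce to $p, q \le 2/3$; the remaining regime where $\min\{p,q\} \le 1/3$ and $\max\{p,q\} \ge 2/3$ forces $n(p-q)^2/(p+q) \ge n/12 \to \infty$ and so cannot arise along our subsequence. Under the restriction $p, q \le 2/3$ one has $p(1-p) + q(1-q) \ge (p+q)/3$, so the standardized gap $\mu_n := n(p-q)/\sqrt{\Var(D_n)}$ is uniformly bounded by $\sqrt{3C}$.

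I would then split into two regimes. If $np \to \infty$ along the subsequence (equivalently $np(1-p), nq(1-q) \to \infty$), apply the Lindeberg CLT to $X_n$ and $Y_n$ separately; passing to a further subsequence with $\mu_n \to \mu_\infty \in [0, \sqrt{3C}]$, one gets $P(n,p_n,q_n) = \Pr(D_n \le 0) \to \Phi(-\mu_\infty) > 0$, contradicting $P(n,p_n,q_n) \to 0$. If instead $np = O(1)$ along the subsequence, then $\Pr(X_n = 0) = (1-p)^n$ is bounded below by a positive constant and $\Pr(D_n \le 0) \ge \Pr(X_n = 0)$ since $Y_n \ge 0$, giving the same contradiction. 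The only other logical possibility, $nq = O(1)$ with $np \to \infty$, is ruled out by the bound $(p-q)^2 \le C(p+q)/n$: a short calculation (solving a quadratic in $\sqrt{p}$) shows $np$ is forced to be bounded whenever $nq$ is.

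The main obstacle is keeping the case analysis organized between the CLT regime and the bounded-mean regime, and in particular using the subsequential bound on $\mu_n$ to exclude any mixed behavior of $np$ versus $nq$. The Chebyshev half is essentially one line, and each individual case in the second half is standard once the reduction to $p, q \le 2/3$ and the variance lower bound $p(1-p) + q(1-q) \ge (p+q)/3$ are in hand.
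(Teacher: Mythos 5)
Your Chebyshev and CLT outline matches the paper's one-line proof sketch (which is all the paper gives), and the Chebyshev half is correct as written. For the converse, the split between $np \to \infty$ (Lindeberg CLT for the triangular array, passing to a sub-subsequence where $\mu_n$ converges and concluding $P \to \Phi(-\mu_\infty) > 0$) and $np = O(1)$ (where $\Pr(X_n = 0) = (1-p)^n$ is bounded away from zero) is the right structure, and your observation that the bound $n(p-q)^2 \le C(p+q)$ forces $np$ and $nq$ to be bounded or unbounded simultaneously is also sound.

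The one genuine gap is the reduction to $p, q \le 2/3$ by complementation. While $P(n,p,q)$ is invariant under $(p,q) \mapsto (1-p,1-q)$, the quantity $n(p_n-q_n)^2/(p_n+q_n)$ is not: it becomes $n(p_n-q_n)^2/(2-p_n-q_n)$, and the subsequential bound you are assuming does not transfer to the complemented parameters when $p_n + q_n$ is close to $2$. This reflects a real issue: taking for instance $p_n = 1 - 1/n$ and $q_n = 1 - (\log n)/n$, one has $n(p_n-q_n)^2/(p_n+q_n) \to 0$ yet $P(n,p_n,q_n) \to 0$ (it is of order $n^{-1}e^{O(\sqrt{\log n})}$), so the ``if and only if'' as literally stated fails when $p_n, q_n \to 1$. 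The complement-invariant normalizer is $n(p_n-q_n)^2/(p_n(1-p_n)+q_n(1-q_n))$, which is comparable to the stated one exactly when $p_n, q_n \le 2/3$. So the right fix is not to try to reduce to that range by complementation, but to treat $p_n, q_n \le 2/3$ as a standing hypothesis, as in Proposition~\ref{prop:explicit-dense} and Section~\ref{sec:explicit}; with that hypothesis in place, your two-case argument goes through unchanged and the reduction step can simply be dropped.
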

One direction of Proposition~\ref{prop:weak-consistency} follows from Chebyshev's inequality,
while the other follows from the central limit theorem.

\subsection{Relation to prior work}

Over the years, various authors have
improved on the seminal work of Dyer and Frieze~\cite{DF:89}
by proving weaker sufficient conditions on the sequences $p_n$ and $q_n$
for which the planted bisection
can be recovered. (Various results also generalized the problem by allowing
more than two labels, but we will ignore this generalization here.)
For example, Jerrum and Sorkin~\cite{JS:98}
required $p_n - q_n = \Omega(n^{-1/6 + \epsilon})$, while Condon and Karp
improved this to $p_n - q_n = \Omega(n^{-1/2 + \epsilon})$.
McSherry~\cite{M:01} made a big step by showing that if
\[
\frac{p_n-q_n}{p_n} \geq C \sqrt{\frac{\log n}{p_n n}}
\]
for a large enough constant $C$ then spectral methods can exactly recover the labels.
This was significant because it allowed $p_n$ and $q_n$ to be as small as $\Theta(n^{-1} \log n)$,
which is order-wise the smallest possible.
A similar result for a slightly different random graph model had
been claimed earlier by Boppana~\cite{B:87}, but the proof was incomplete.
Carson and Impagliazzo~\cite{CI:01} showed that with slightly worse poly-logarithmic factors,
a simple hill-climbing algorithm also works.
Analogous results were later obtained by by Bickel and Chen~\cite{BC:09} using
modularity maximization (for which no efficient algorithm is known).

Until now, none of the sufficient conditions in the literature
were also necessary; in fact, necessary conditions on $p_n$ and $q_n$
have only rarely been discussed.
It is instructive to keep the example $p_n=1/2$, $q_n = 1/2-r_n$ in mind. In this case
McSherry's condition is the same as requiring that
$r_n \geq C \sqrt{n^{-1} \log n}$.
On the other hand, Carson and Impagliazzo~\cite{CI:01} pointed out that if $r_n \le c \sqrt{n^{-1} \log n}$ for
some small constant $c$ then the minimum bisection no longer coincides with the planted bisection (as far as we are aware, this was the only necessary
condition in the literature).
From a statistical point of view, this means that the true communities can no longer be reconstructed
perfectly. Our contribution closes the gap between McSherry's sufficient condition and
Carson-Impagliazzo's necessary condition. In the above case, for example,
Proposition~\ref{prop:explicit-dense} shows that the critical
constant is $C = c = 1$.

\subsection{Parallel independent work}

Abbe et al.~\cite{AbBaHa:14} independently
studied the same problem in the logarithmic sparsity regime. They consider
$p_n = (a \log n)/n$ and $q_n = (b \log n) / n$ for constants $a$ and $b$;
they show that $(a+b) - 2\sqrt{ab} > 1$ is
sufficient for strong consistency and that
$(a+b) - 2\sqrt{ab} \ge 1$ is necessary. Note that these are implied
by Proposition~\ref{prop:explicit-sparse}, which is more precise. 
Abbe et al.\ also consider a semidefinite programming algorithm
for recovering the labels; they show that it performs well under
slightly stronger assumptions.

\subsection{Other related work, and an open problem}

Consistency is not the only interesting notion that one can study
on the planted partition model. Earlier work by the
authors~\cite{MoNeSl:13,MoNeSl:14} and by Massouli\'e~\cite{Massoulie:13}
considered a much weaker notion of recovery: they only asked whether
one could find a labelling that was positively correlated with the
true labels.

There are also model-free notions of consistency. Kumar and
Kannan~\cite{KumarKannan:10} considered a deterministic spatial clustering
problem and showed that if every point is substantially closer to the
center of its own cluster than it is to the center of the other cluster
then one can exactly reconstruct the clusters. This is in much the same
spirit as Theorem~\ref{thm:consistency}.

Makarychev, Makarychev, and Vijayaraghavan~\cite{MaMaVi:12,MaMaVi:14}
proposed semi-random models for planted bisections. These models
allow for adversarial noise, and also allow edge distributions that
are not independent, but only invariant under permutations.
They then give approximation algorithms for \textsc{Min-Bisection},
which they prove to work under expansion conditions that hold
with high probability for their semi-random model.

We ask whether the techniques developed here could sharpen the results
obtained by Makarychev et al. For example, exact recovery under adversarial
noise is clearly impossible, but if the adversary is restricted to adding $o(n)$
edges, then maybe one can guarantee almost exact recovery.

\section{Binomial probabilities and graph structure}

In this section, we will prove Proposition~\ref{prop:majorities}, which relates
the binomial probabilities $P(n, p_n, q_n)$ to the structure of
random graphs $G \sim \calG(2n, p_n, q_n)$.

From now on, the letters $c$ and $C$ refer to positive
constants, whose value may change from line to line. We adopt the convention
that $C$ refers to a ``sufficiently large'' constant, so that any statement
involving $C$ will remain true if $C$ is replaced by a larger constant.
Similarly, $c$ refers to a ``sufficiently small'' constant.

\subsection{Binomial perturbation estimates}
We begin by stating some estimates on how binomial probabilities
respond to perturbations, which we will prove in Section~\ref{sec:binomial}.
For example, we will use the following proposition for two
main applications: when $n = m$ and
$\ell = (np)^{1/2} \log^{-1/2} n$, it can be used to
get large majorities ``for free,'' by implying that if
every node has a majority a.a.s., then in fact every node has
a majority of size $(np)^{1/2} \log^{-1/2} n$ a.a.s.
On the other hand, we will also apply Proposition~\ref{prop:large-majority}
with $m = n-1$ and $\ell = 1$, which will be useful (later in this section)
for showing that whether $u$
has a majority is almost independent of whether $v$ has a majority.

\begin{proposition}\label{prop:large-majority}
 Let $X \sim \Binom(m, p)$ and $Y \sim \Binom(n, q)$,
 where $mp \ge 64 \log m$ and $p \le 2/3$. For any $1 \le \ell \le \sqrt{mp \log m}$,
 \begin{align}
  \Pr(Y \ge X + \ell) &\ge
   \Pr(Y \ge X) e^{\big( -C \ell \sqrt{\frac{\log m}{mp}}\big)} - 2m^{-2}
  \label{eq:large-majority-pos} \\
  \Pr(Y \ge X - \ell) &\le
  \Pr(Y \ge X)
  e^{\big(
    C \ell \sqrt{\frac{\log m}{mp}}\big)
  } + 2m^{-2},\label{eq:large-majority-neg}
 \end{align}
 where $C > 0$ is a universal constant.
\end{proposition}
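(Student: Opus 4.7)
The plan is to isolate the role of $X$. Setting $\mu(k) := \Pr(X=k)$ and $f(j) := \Pr(Y \ge j)$, we have
\[
\Pr(Y \ge X + \ell) = \sum_k \mu(k-\ell)f(k), \qquad \Pr(Y \ge X - \ell) = \sum_k \mu(k+\ell)f(k),
\]
while $\Pr(Y \ge X) = \sum_k \mu(k)f(k)$. So the whole proposition reduces to a pointwise comparison of $\mu(k \mp \ell)$ with $\mu(k)$ on a range of $k$ that holds almost all of the mass of $X$. First, a Bernstein inequality together with the hypotheses $mp \ge 64\log m$ and $p \le 2/3$ produces a constant $C_0$ for which $\Pr(|X - mp| > C_0\sqrt{mp\log m}) \le 2m^{-2}$; call $G := \{k : |k - mp| \le C_0\sqrt{mp\log m}\}$ the good range.

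The main calculation is a local ratio estimate on $G$. Starting from $\mu(k) = \binom{m}{k}p^k(1-p)^{m-k}$,
\[
\log\frac{\mu(k+\ell)}{\mu(k)} = \sum_{j=1}^{\ell}\log\frac{(m-k-j+1)\,p}{(k+j)(1-p)}.
\]
For $k = mp + t \in G$ and $1 \le j \le \ell \le \sqrt{mp\log m}$, the hypothesis $mp \ge 64\log m$ (with $p \le 2/3$) keeps each of $(t+j)/(mp)$ and $(t+j)/(m(1-p))$ well below $1/4$, so a Taylor expansion $\log(1+x) = x + O(x^2)$ is safely valid and summation gives
\[
\log\frac{\mu(k \pm \ell)}{\mu(k)} = \mp\frac{t\ell}{mp(1-p)} - \frac{\ell^2}{2\,mp(1-p)} + O\!\left(\frac{\ell(t^2+\ell^2)}{(mp)^2}\right).
\]
Under our hypotheses every term on the right is $O(\ell\sqrt{\log m/(mp)})$: the first because $|t| \le C_0\sqrt{mp\log m}$, the second because $\ell \le \sqrt{mp\log m}$ is exactly the inequality $\ell^2/(mp) \le \ell\sqrt{\log m/(mp)}$, and the error term is smaller by a further factor $\sqrt{\log m/(mp)} \le 1/8$. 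Consequently, for every $k \in G$,
\[
e^{-C\ell\sqrt{\log m/(mp)}} \;\le\; \mu(k\pm\ell)/\mu(k) \;\le\; e^{C\ell\sqrt{\log m/(mp)}}.
\]

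With this in hand the two inequalities follow by splitting each sum into $G$ and its complement. For the lower bound,
\[
\Pr(Y \ge X + \ell) \ge \sum_{k\in G}\mu(k-\ell)f(k) \ge e^{-C\ell\sqrt{\log m/(mp)}}\sum_{k\in G}\mu(k)f(k) \ge e^{-C\ell\sqrt{\log m/(mp)}}\Pr(Y\ge X) - 2m^{-2};
\]
for the upper bound,
\[
\Pr(Y \ge X - \ell) \le e^{C\ell\sqrt{\log m/(mp)}}\Pr(Y\ge X) + \sum_{k \notin G}\mu(k+\ell),
\]
and the residual $\sum_{k \notin G}\mu(k+\ell) = \Pr(X-\ell \notin G)$ is bounded by $2m^{-2}$ by a second application of the Chernoff bound (the shift $\ell \le \sqrt{mp\log m}$ still leaves the bad window in the tail, provided $C_0$ is chosen large enough from the start). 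The main obstacle is the local ratio step: both the linear term $|t|\ell/(mp)$, with $|t|$ as large as $C_0\sqrt{mp\log m}$, and the quadratic-in-$\ell$ term $\ell^2/(mp)$ must fit inside the target $C\ell\sqrt{\log m/(mp)}$, and this is precisely what the two hypotheses of the proposition are calibrated to deliver.
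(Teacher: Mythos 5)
Your proof is correct and follows essentially the same strategy as the paper: decompose the shifted probability as a sum over values of $X$, truncate to the Bernstein window around $mp$ (absorbing the tail into the $2m^{-2}$ term), and bound the local ratio $\Pr(X=k\pm\ell)/\Pr(X=k)$ uniformly over that window by $e^{\pm C\ell\sqrt{\log m/(mp)}}$. The only cosmetic difference is that you establish the local ratio bound by Taylor-expanding $\log(1+x)$, whereas the paper's Lemma~\ref{lem:ratio} derives an equivalent bound by a direct telescoping estimate of the binomial coefficient ratio.
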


Note that the condition $mp \ge 64 \log m$ is not only a technical one
(although the constant $64$ is certainly not optimal).
For example, if $p = m^{-1} \log m$ and $q = 0$
then~\eqref{eq:large-majority-neg} fails to hold, because
$\Pr(Y \ge X) = \Pr(X = 0) \sim m^{-1}$ but
$\Pr(Y \ge X - 1) = \Pr(X \le 1) \sim m^{-1} \log m$.

Nevertheless, it is still possible to consider similar estimates
in the sparse case. Here is an analogue of~\eqref{eq:large-majority-neg}
that holds with $p = O(m^{-1} \log m)$.

\begin{proposition}\label{prop:large-majority-sparse}
 If $\frac 12 \log m \le mp \le 128 \log m$ and $1 \le \ell \le \log m$
 then
 \[
  \Pr(Y \ge X - \ell) \le \left(\frac{C \log m}{\ell}\right)^{C\ell}
  \Pr(Y \ge X),
 \]
 where $C > 0$ is a universal constant.
\end{proposition}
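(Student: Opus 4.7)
The plan is to reduce everything to a pointwise binomial inequality about $X$ alone, and then prove that inequality by the standard ratio identity. First I would condition on $Y$: writing
\[
\Pr(Y \ge X - \ell) = \sum_{k \ge 0} \Pr(Y = k)\,\Pr(X \le k+\ell), \qquad \Pr(Y \ge X) = \sum_{k \ge 0} \Pr(Y = k)\,\Pr(X \le k),
\]
one sees that it suffices to show
\[
\Pr(X \le k + \ell) \le \left(\frac{C \log m}{\ell}\right)^{C\ell} \Pr(X \le k) \qquad \text{for every } k \ge 0.
\]
This eliminates $Y$ from the problem entirely and turns the statement into a fact about a single $\Binom(m,p)$ random variable.

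The next step is the pointwise ratio estimate
\[
\frac{\Pr(X = k+j)}{\Pr(X = k)} = \prod_{i=1}^{j} \frac{(m-k-i+1)p}{(k+i)(1-p)} \le \frac{\bigl(mp/(1-p)\bigr)^j}{(k+1)(k+2)\cdots(k+j)} \le \frac{(256\log m)^j}{j!},
\]
valid for all $k \ge 0$ and $j \ge 1$. Here I use the hypothesis $mp \le 128 \log m$ (which forces $p \le 1/2$ for $m$ large enough, hence $mp/(1-p) \le 256 \log m$) together with the elementary inequality $(k+1)(k+2)\cdots(k+j) \ge j!$, which is the source of the uniformity in $k$. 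Combining with the trivial lower bound $\Pr(X \le k) \ge \Pr(X = k)$ gives
\[
\frac{\Pr(X \le k+\ell)}{\Pr(X \le k)} \le 1 + \sum_{j=1}^{\ell} \frac{(256\log m)^j}{j!}.
\]
Because $\ell \le \log m$, the summands are monotone increasing in $j$ on $[1,\ell]$, so the whole sum is dominated by $\ell\,(256\log m)^\ell/\ell!$, and Stirling bounds this by $\ell\,(256 e \log m/\ell)^\ell$. Using $\log m/\ell \ge 1$ the leading $\ell$ is absorbed into the exponent, producing the required bound $(C \log m/\ell)^{C\ell}$ for a sufficiently large universal $C$.

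There is no real obstacle in this approach; every step is an elementary estimate. The only thing worth stressing is that the ratio bound must hold uniformly in $k$, so the comparison of $\Pr(X \le k+\ell)$ to $\Pr(X \le k)$ has to pass through the single term $\Pr(X = k)$ in the denominator rather than through a Gaussian-type tail comparison. The worst case is $k = 0$, where $\Pr(X = 0) = (1-p)^m$ can be as small as $m^{-O(1)}$; fortunately the numerator $\Pr(X \le \ell)$ is not much larger than this in the sparse regime, and the quantitative loss is exactly what the factor $(C \log m/\ell)^{C\ell}$ records.
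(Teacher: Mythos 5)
Your proof is correct, and it takes a cleaner route than the paper's. The paper conditions on $X$, writing
\[
\Pr(Y \ge X - \ell) = \Pr(X \le \ell - 1) + \sum_{k \ge 0} \Pr(Y \ge k)\,\Pr(X = k + \ell),
\]
then truncates the sum at $k \le \sqrt m$ via Bernstein (so that its ratio lemma, which assumes $k = o(m)$ in order to make $\log\frac{m-k}{m-mp}$ negligible, applies), and finally bounds the boundary term $\Pr(X \le \ell - 1)$ by a second application of the same lemma together with $\Pr(X = 0) \le \Pr(Y \ge X)$. You instead condition on $Y$, reducing everything to the CDF inequality $\Pr(X \le k+\ell) \le (C\log m/\ell)^{C\ell}\Pr(X \le k)$, and prove the ratio estimate uniformly in $k$ by bounding $(m-k-i+1)p \le mp$ and $(k+1)\cdots(k+j) \ge j!$. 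This avoids both the Bernstein truncation and the separate treatment of the small-$X$ boundary term, since your $k = 0$ case is exactly the comparison of $\Pr(X \le \ell)$ with $\Pr(X = 0)$ that the paper has to handle by hand. The substance of both arguments is the same binomial ratio identity and a Stirling bound, but your decomposition makes the bookkeeping lighter. One small thing worth spelling out in a final write-up: the absorption of the prefactor $1 + \ell$ into $(C\log m/\ell)^{C\ell}$ relies on $\log\ell \le \ell$ and $\ell \log(\log m/\ell) \ge 0$, so that increasing $C$ suffices; it holds, but deserves a line.
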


\subsection{Majorities are uncorrelated}

The preceding propositions may be combined to 
show that the event that $u$ has a minority is essentially independent
of the event that $v$ has a minority.
First, we observe that removing one trial from a binomial random
variable doesn't change very much.

\begin{lemma}\label{lem:one-less}
  There is a universal constant $C > 0$ such that for all
  $m, n$ and all $p, q \le 2/3$,
  \[
  (1 - C m^{-1/3}) P(m-1, n, p, q) - 2m^{-2}
    \le P(m, n, p, q)
    \le (1 + C n^{-1/3}) P(m, n-1, p, q) + 2n^{-2}.
  \]
\end{lemma}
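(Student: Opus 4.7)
The plan is to reduce the lemma to binomial perturbation estimates via two exact identities, and then to apply Proposition~\ref{prop:large-majority} (in both its original and ``swapped'' forms, the latter obtained by the natural relabelling $X \leftrightarrow Y$ in its statement). Set $p^* = \max\{p, q\}$, $q^* = \min\{p, q\}$, and let $X \sim \Binom(m, p^*)$, $X' \sim \Binom(m-1, p^*)$, $Y \sim \Binom(n, q^*)$, $Y' \sim \Binom(n-1, q^*)$. Writing $X = X' + Z$ with $Z \sim \Ber(p^*)$ independent of $(X', Y)$ and conditioning on $Z$ gives
\[
  P(m, n, p, q) = P(m-1, n, p, q) - p^* \Pr(Y = X'),
\]
and writing $Y = Y' + W$ with $W \sim \Ber(q^*)$ independent of $(Y', X)$ gives
\[
  P(m, n, p, q) = P(m, n-1, p, q) + q^* \Pr(Y' = X - 1).
\]
Hence the lower bound in the lemma reduces to $p^* \Pr(Y = X') \le C m^{-1/3} P(m-1, n, p, q) + 2 m^{-2}$, and the upper bound to the analogous estimate $q^* \Pr(Y' = X - 1) \le C n^{-1/3} P(m, n-1, p, q) + 2 n^{-2}$.

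For the lower bound I would case-split on whether $(m-1) p^* \ge 64 \log(m-1)$. In the dense case, Proposition~\ref{prop:large-majority} applied to $(X', Y)$ with $\ell = 1$ gives $\Pr(Y \ge X' + 1) \ge P(m-1, n, p, q) \exp(-C \sqrt{\log m / ((m-1) p^*)}) - 2/(m-1)^2$, so the identity $\Pr(Y = X') = \Pr(Y \ge X') - \Pr(Y \ge X' + 1)$ together with $1 - e^{-t} \le t$ yields $\Pr(Y = X') \le C \sqrt{\log m /(m p^*)}\, P(m-1, n, p, q) + C m^{-2}$; multiplying by $p^* \le 2/3$ and using $\sqrt{p^* \log m /m} = O(m^{-1/3})$ for $m$ large completes the estimate. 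In the sparse case, $p^* = O(\log m / m) = O(m^{-1/3})$ and the trivial inequality $\Pr(Y = X') \le P(m-1, n, p, q)$ suffices. Small $m$ is absorbed by taking $C$ large enough that the lemma becomes vacuous.

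For the upper bound I would argue symmetrically, but using the swapped form of Proposition~\ref{prop:large-majority}, whose threshold condition is $(n-1) q^* \ge 64 \log(n-1)$ and whose error is $2/n^2$. In the dense swapped case, applying the swapped proposition to $(Y', X)$ with $\ell = 1$ and $\ell = 2$ sandwiches
\[
  \Pr(X = Y' + 1) = \Pr(X \ge Y' + 1) - \Pr(X \ge Y' + 2) \le C \sqrt{\log n/(n q^*)}\, \Pr(X \ge Y') + 2/n^2,
\]
and multiplying by $q^*$ turns the prefactor into $O(n^{-1/3})$. When $(n-1) q^*$ is itself of order $\log n$, Proposition~\ref{prop:large-majority-sparse} plays the analogous role, and when $q^*$ is even smaller a direct Poisson-style computation for $Y'$ finishes the job.

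The main obstacle is the dense swapped case when $P$ is small: the estimate just produced is naturally phrased in terms of $\Pr(X \ge Y')$, which can be close to $1$ even when $P(m, n-1, p, q)$ is tiny, so one does not immediately obtain a bound proportional to $P$. My plan to close this gap is a dichotomy on $P$. When $P \ge 1/2$, the trivial bound $\Pr(X \ge Y') \le 1 \le 2P$ suffices. When $P < 1/2$, a short Chernoff argument comparing the medians of $X$ and $Y'$ forces $m p^* \ge (n-1) q^*/2 \ge 32 \log n$ (otherwise $Y' > X$ would hold with probability $\ge 1 - O(1/n)$, contradicting $P < 1/2$); hence $m p^* \ge 32 \log m$ (in the case $m \le n$; the case $m > n$ is already handled by applying the unswapped proposition directly with error $2/m^2 \le 2/n^2$). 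The \emph{unswapped} Proposition~\ref{prop:large-majority} then applies to $(X, Y')$ and yields $q^* \Pr(Y' = X - 1) \le C q^* \sqrt{\log m /(m p^*)}\, P + 2 q^*/m^2$; the first term is $\le C n^{-1/3} P$ thanks to $m p^* \ge (n-1) q^*/2$, and the residual $2 q^*/m^2$ is absorbed into $C n^{-1/3} P + 2/n^2$ using the lower bounds on $m$ and $(n-1) q^*$, with $C$ enlarged as needed.
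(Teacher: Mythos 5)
Your setup and lower-bound argument coincide with the paper's proof: the paper likewise writes $X = X' + \xi_X$ with $\xi_X \sim \Ber(p)$, conditions on $\xi_X$, applies Proposition~\ref{prop:large-majority} with $\ell = 1$, and splits on whether $(m-1)p \ge 64\log(m-1)$, so that half is fine. For the upper bound the paper simply says ``the other inequality is essentially identical,'' and your more careful treatment has correctly put its finger on why that dismissal is too quick: Proposition~\ref{prop:large-majority} is asymmetric in its two arguments --- its hypothesis and additive error are governed by $(m,p)$ --- so mirroring the lower-bound argument produces an error of the form $O\bigl(\sqrt{\log m / m}\bigr)\,\Pr(Y' \ge X) + 2 q^* m^{-2}$, with $m$ where the lemma demands $n$. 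For the paper's actual uses, always $m \in \{n-1, n\}$, this distinction evaporates (there $\sqrt{\log m/m} = O(n^{-1/3})$ and $2 q^* m^{-2} \le 2 n^{-2}$), and the naive mirror closes the bound; the fully general $m,n$ claim in the statement is not really addressed by the paper.

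Your dichotomy for the general case, however, does not close the gap it sets out to fix. In the branch $P < 1/2$, $m \le n$, you apply the unswapped Proposition~\ref{prop:large-majority} to $(X, Y')$ and are left with a residual $2 q^*/m^2$, which you assert is absorbed into $C n^{-1/3} P + 2 n^{-2}$ ``using the lower bounds on $m$ and $(n-1)q^*$.'' But take $m \asymp \sqrt n$, $p^* = 2/3$, $(n-1)q^* \asymp \log n$: then $m p^* \gg (n-1) q^*/2$, yet $P = \Pr(Y' \ge X) = e^{-\Theta(\sqrt n)}$, so $C n^{-1/3} P$ is negligible while $2 q^*/m^2 \asymp \log n / n^2 \gg 2 n^{-2}$; the residual is not absorbed. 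The lemma itself still holds there (the true quantity $q^* \Pr(Y' = X - 1)$ is also exponentially small), but the additive $2/m^2$ error in Proposition~\ref{prop:large-majority} is too coarse to see it, so a different estimate is needed. A smaller issue in the same branch: your Chernoff step only yields $m p^* \ge 32 \log m$, which falls below the $64 \log m$ hypothesis of Proposition~\ref{prop:large-majority}, so the intermediate range $32\log m \le m p^* < 64\log m$ would still need Proposition~\ref{prop:large-majority-sparse} and a separate calculation to tame its polylogarithmic multiplicative factor down to $1 + O(n^{-1/3})$.
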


\begin{proof}
  Assume without loss of generality that $p \ge q$.
  Let $X' \sim \Binom(m-1, p)$, $Y' \sim \Binom(n-1, q)$,
  $\xi_X \sim \Ber(p)$ and $\xi_Y \sim \Ber(q)$ be independent,
  and then take $X = X' + \xi_X$ and $Y = Y' + \xi_Y$. In terms
  of these variables, the left-hand inequality above may be written 
  as
  \[
  (1- C m^{-1/3}) \Pr(Y \ge X') - 2m^{-2} \le \Pr(Y \ge X)
  \]
  We will focus on this inequality (since the other inequality
  is essentially identical). Now,
 \begin{align}
  \Pr(Y \ge X) &= \Pr(\xi_X = 0, Y \ge X') + \Pr(\xi_X = 1, Y \ge X' + 1) \notag \\
  &= (1-p) \Pr(Y \ge X') + p \Pr(Y \ge X' + 1).
  \label{eq:one-less-1}
 \end{align}
 If we assume that $(m-1)p \ge 64 \log (m-1)$
 then~\eqref{eq:large-majority-pos} implies that
 \begin{align*}
  p \Pr(Y \ge X' + 1)
  &\ge p\left(1 - C \sqrt\frac{\log m}{mp}\right) \Pr(Y \ge X') - 2m^{-2}\\
  &\ge \left(p - C \sqrt\frac{\log m}{m}\right) \Pr(Y \ge X') - 2m^{-2}.
 \end{align*}
 Plugging this into~\eqref{eq:one-less-1} yields
 \[
 (1 - C m^{-1/3}) \Pr(Y \ge X') - 2m^{-2} \le \Pr(Y \ge X),
 \]
 which implies the claim. On the other hand, if $(m-1)p \le 64 \log (m-1)$ then
 directly from~\eqref{eq:one-less-1} we have
 \[
  \Pr(Y \ge X) \ge (1-p) \Pr(Y \ge X') \ge (1 - C m^{-1/3}) \Pr(Y \ge X').
  \qedhere
 \]
\end{proof}

Next, we show that $\{u \text{ has a minority}\}$ and
$\{v \text{ has a minority}\}$ are essentially uncorrelated.
We recall that if $A$ and $B$ are events then
$\Cov(A, B) = \Pr(A \cap B) - \Pr(A) \Pr(B)$.

\begin{lemma}\label{lem:minority-covariance}
Fix nodes $u$ and $v$. Let $A$ and $B$
be the events that $u$ and $v$ respectively have minorities.
If $p, q \le 2/3$ then
\[
  |\Cov(A, B)| \le C n^{-1/3} \Pr(A) \Pr(B) + C n^{-4}.
\]
\end{lemma}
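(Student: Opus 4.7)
The plan is to exploit a conditional independence: writing $E$ for the event $\{\sigma_u = \sigma_v\}$ and $\epsilon$ for the indicator of the edge $\{u, v\}$, I would first check that $A$ and $B$ are conditionally independent given $\mathbf{1}_E$ and $\epsilon$. This is because, on $E$, the label count on $V \setminus \{u, v\}$ is deterministic, and the edges from $u$ to $V \setminus \{u, v\}$ are disjoint from the edges from $v$ to $V \setminus \{u, v\}$; so the same- and different-label neighbor counts $(N_u^+, N_u^-)$ and $(N_v^+, N_v^-)$ (excluding the contribution of $\{u,v\}$) are independent conditional on $\mathbf{1}_E$ and $\epsilon$, and $A, B$ are functions of the $u$-pair together with $\epsilon$ and the $v$-pair together with $\epsilon$, respectively. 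The same reasoning applies on $E^c$.

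Given this, the $u \leftrightarrow v$ symmetry gives $\Pr(A \mid \mathbf{1}_E, \epsilon) = \Pr(B \mid \mathbf{1}_E, \epsilon) =: f(\mathbf{1}_E, \epsilon)$, so $\Cov(A, B) = \Var(f)$. By the law of total variance (with $\pi = \Pr(E)$),
\[
\Var(f) = \pi \Var(f \mid E) + (1 - \pi)\Var(f \mid E^c) + \pi(1-\pi)\bigl(\E[f \mid E] - \E[f \mid E^c]\bigr)^2.
\]
On $E$, $\epsilon \sim \Ber(p)$ and only shifts the threshold defining $A$ by $1$, so $\Var(f \mid E) = p(1-p)(\alpha_0 - \alpha_1)^2$ where $\alpha_j = \Pr(\Binom(n, q) \ge \Binom(n-2, p) + j)$; an analogous identity holds on $E^c$, with $q(1-q)$ in place of $p(1-p)$ and $\Binom(n-1, p), \Binom(n-1, q)$ in place of $\Binom(n-2, p), \Binom(n, q)$.

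The third step is to bound each of the three pieces. For the within-$E$ term I would split by regime: if $np \ge 64 \log n$, I invoke Proposition~\ref{prop:large-majority} with $\ell = 1$ to get $(\alpha_0 - \alpha_1)^2 \le C \alpha_0^2 \log n / (np) + C n^{-4}$, and hence $p(1-p)(\alpha_0 - \alpha_1)^2 \le C \alpha_0^2 \log n / n + C n^{-4}$; if instead $np < 64 \log n$ then $p \le C n^{-1} \log n = o(n^{-1/3})$, and the crude bound $(\alpha_0 - \alpha_1)^2 \le \alpha_0^2$ suffices. The within-$E^c$ term is handled identically. For the between term, Lemma~\ref{lem:one-less} controls the effect of perturbing a binomial sample size by one, yielding $|\E[f \mid E] - \E[f \mid E^c]| \le C n^{-1/3} \Pr(A) + C n^{-2}$, whose square is $C n^{-2/3} \Pr(A)^2 + C n^{-4}$. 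A final application of Lemma~\ref{lem:one-less} lets me replace each $\alpha_0$ by $\Pr(A)$ up to a factor $1 + O(n^{-1/3})$, so summing the three contributions produces $|\Cov(A, B)| \le C n^{-1/3} \Pr(A)^2 + C n^{-4} = C n^{-1/3} \Pr(A) \Pr(B) + C n^{-4}$, the last equality by the $u \leftrightarrow v$ symmetry.

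The main obstacle is the unified bookkeeping across the sparse and dense regimes of $np$, and the conversion between the conditional probabilities $\Pr(A \mid E), \Pr(A \mid E^c)$ and the unconditional $\Pr(A)$: each step could in principle cost more than the target $n^{-1/3}$ factor if handled naively. Once the conditional independence is in place, however, no essentially new estimate is required beyond Propositions~\ref{prop:large-majority}--\ref{prop:large-majority-sparse} and Lemma~\ref{lem:one-less}.
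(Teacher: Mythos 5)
Your argument is correct and uses the same central idea as the paper: the events $A$ and $B$ are conditionally independent once you condition on the edge indicator (plus whatever label information is needed), so $\Cov(A,B)$ reduces to a variance of a conditional probability, which is then bounded via the perturbation estimates (Proposition~\ref{prop:large-majority}, Proposition~\ref{prop:large-majority-sparse}) and the sample-size-perturbation bound (Lemma~\ref{lem:one-less}). The one real difference is organizational. The paper fixes the labels $\sigma_u=+$, $\sigma_v=-$ at the outset, conditions only on the edge indicator $\xi$, shows $\Cov(A,B\mid\sigma_u,\sigma_v)=\Var(\Pr(A\mid\xi))$, bounds that, and then remarks that ``the other cases are very similar,'' leaving the averaging over label configurations implicit. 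You instead treat $\mathbf{1}_E$ (whether $\sigma_u=\sigma_v$) as part of the conditioning, observe that $(A,B)$ are conditionally independent given $(\mathbf{1}_E,\epsilon)$, and decompose $\Var(f)$ by the law of total variance over $\mathbf{1}_E$. This buys you an explicit treatment of the ``between'' term $\pi(1-\pi)(\E[f\mid E]-\E[f\mid E^c])^2$ via Lemma~\ref{lem:one-less}, which the paper waves off, and it unifies the four $(\sigma_u,\sigma_v)$ cases into two. One small caution in the sparse regime for the within-$E^c$ term: there the edge shifts the threshold favourably, so $\beta_1\ge\beta_0$ rather than $\le$, and the crude bound $(\beta_1-\beta_0)^2\le\beta_0^2$ does not apply directly; you need Proposition~\ref{prop:large-majority-sparse} to control $\beta_1/\beta_0$ by a polylogarithmic factor, which is then killed by $q\le p=O(n^{-1}\log n)$. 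You have anticipated this by flagging that Proposition~\ref{prop:large-majority-sparse} may be needed, so this is a matter of writing it out rather than a gap.
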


\begin{proof}
 Assume that $p > q$ and that $\sigma_u = +$ and $\sigma_v = -$
 (the other cases are very similar).
 Let $\xi$ be the indicator that $u \sim v$, and let $A$ and $B$
 be the events that $u$ and $v$ respectively have minorities.
 Note that $A$ and $B$ are conditionally independent given $\xi$,
 which means that
 \begin{align*}
  \Cov(A, B)
  &= \Cov(\Pr(A \mid \xi), \Pr(B \mid \xi)) \\
  &\le \sqrt{\Var(\Pr(A \mid \xi)) \Var(\Pr(B \mid \xi))} \\
  &= \Var(\Pr(A \mid \xi)),
 \end{align*}
 where the last equality holds because $A$ and $B$ have the same
 distribution given $\xi$.

 Define
 $\alpha = P(n-1, n, p, q) = \Pr(\text{$u$ has a minority}) =
 \Pr(\text{$v$ has a minority})$. By our assumption that
 $\sigma_u \ne \sigma_v$ and $p > q$, we have
 $\Pr(A \mid \xi = 0) \le \Pr(A \mid \xi = 1)$.
 On the other hand,
 \[
   \Pr(A \mid \xi = 0) = P(n-1, n-1, p, q) \ge (1 - C n^{-1/3}) \alpha - 2n^{-2}.
 \]
 by Lemma~\ref{lem:one-less}.

 Next, we consider $\Pr(A \mid \xi = 1)$. Note that
 \begin{align*}
  \Pr(A \mid \xi = 1)
  &= \Pr(1 + \Binom(n-1, q) \ge \Binom(n-1, p)) \notag \\
  &\le \Pr(1 + \Binom(n, q) \ge \Binom(n-1, p)).
 \end{align*}
 By applying either~\eqref{eq:large-majority-neg}
 or Proposition~\ref{prop:large-majority-sparse}
 to the right hand side above, we have
 \[
   \Pr(A \mid \xi = 1) \le \begin{cases}
     (1 + C n^{-1/6}) \alpha + 2 n^{-2} & \text{$p \ge n^{-1/2}$} \\
     \alpha \log^C n + 2 n^{-2} & \text{otherwise}.
   \end{cases}
 \]
 (To get the second case, we are either applying~\eqref{eq:large-majority-neg}
 for $64 \log n \le np \le n^{1/2}$ or we are applying
 Proposition~\ref{prop:large-majority-sparse}.)
 In the first case, the random variable $\Pr(A \mid \xi)$
 is supported on an interval of width at most
 $C n^{-1/6} \alpha + C n^{-2}$ and so its variance is at most
 $C n^{-1/3} \alpha^2 + C n^{-4}$.
 In the second case, $\Pr(\xi = 1) = q \le p \le n^{-1/2}$,
 and so
 \begin{align*}
   \Var(\Pr(A \mid \xi))
   &\le \E (\Pr(A \mid \xi) - \alpha)^2 \\
   &\le \Pr(\xi = 0) C \alpha^ 2n^{-2/3} + \Pr(\xi = 1) C \alpha^2 \log^{2C} n + Cn^{-4},
 \end{align*}
 which is bounded by $C \alpha^2 n^{-1/3} + Cn^{-4}$.
\end{proof}

\subsection{Graph structure}
 Finally, we will use our preceding estimates to prove
 Proposition~\ref{prop:majorities}. Most of the proof essentially follows
 by straightforward first moment arguments. The most complicated part
 is showing that $P(n, p_n, q_n) = \Omega(n^{-1})$ implies that with constant probability there exists a
 node with a minority. This uses a fairly standard second
 moment argument,
 the main technical part of which is contained in
 Lemma~\ref{lem:minority-covariance}.

\begin{proof}[Proof of Proposition~\ref{prop:majorities}]
  Fix a node $v \in V(G)$ and suppose without loss of generality
  that $\sigma_v = +$. For notational convenience, we will also suppose
  that $p > q$; an essentially identical proof works for $p < q$.
  Let $X$ and $Y$ denote the number of $+$- and $-$-labelled neighbors
  of $v$.
  Then
  \begin{align*}
   X &\sim \Binom(n-1, p_n) \\
   Y &\sim \Binom(n, q_n).
  \end{align*}
  Suppose first that $P(n, p_n, q_n) = o(1)$. Then
  \[
   \Pr(\text{$v$ has a minority}) = \Pr(Y \ge X) = P(n-1, n, p_n, q_n) = o(1)
  \]
  by Lemma~\ref{lem:one-less}.
  Summing over $v \in V(G)$, we have
  \[
   \E(\text{\# of nodes with a minority}) = o(n),
  \]
  and so Markov's inequality implies that a.a.s.\ all but $o(n)$ nodes
  have a majority.

  The case where $P(n, p_n, q_n) = o(n^{-1})$ is very similar, except that
  we conclude with
  $\E(\text{\# of nodes with a minority}) = o(1)$,
  which implies that a.a.s.\ every node has a majority.

  For the rest of the proof, we will assume that $p_n, q_n \le 2/3$. As
  we explained in Section~\ref{sec:explicit}, this case suffices: if
  $p_n, q_n \ge 1/3$ then we may apply the result with $p_n$
  and $q_n$ replaced by $1-p_n$ and $1 - q_n$; if $q_n \le 1/3$ and
  $p_n \ge 2/3$ then $P(n, p_n, q_n) = o(n^{-1})$ and we have already
  given that part of the proof.

  Suppose that the number of nodes without a majority is not $o(n)$ a.a.s.
  Then there is some $\epsilon > 0$ such that for infinitely many $n$,
  the probability of having $\epsilon n$ nodes with a minority is at least
  $\epsilon$. Thus, the expected number of nodes with a minority is at least
  $\epsilon^2 n$ for infinitely many $n$, which in turn implies that
  $P(n-1, n, p_n, q_n) = \Pr(Y \ge X) \ge \epsilon^2$ for infinitely many $n$. By
  Lemma~\ref{lem:one-less}, $P(n, p_n, q_n) \not \to 0$.

  It remains to prove that all nodes have a majority a.a.s.\ only if
  $P(n, p_n, q_n) = o(n^{-1})$. This requires a second moment argument:
  let $\xi_u$ be the indicator that $u$ has a minority
  and let $N = \sum_u \xi_u$ be the number of nodes with a minority. If
  $\alpha = \Pr(\text{$u$ has a minority})$ (which is the same for all $u$) then
  \begin{align*}
   \Var(N)
   &= \sum_u \Var(\xi_u) + \sum_{u \ne v} \Cov(\xi_u, \xi_v) \\
   &\le n \alpha + C n^2 \alpha^2 n^{-1/3} + C n^{-2},
  \end{align*}
  where the last line follows from Lemma~\ref{lem:minority-covariance}.
  In particular, we may bound $\Var(N) \le C \max\{\E N, (\E N)^2, n^{-2}\}$. Now, if
  $P(n, p_n, q_n)$ is not $o(n^{-1})$ then there is some $\epsilon > 0$
  and infinitely many $N$ for which $\E N \ge \epsilon$. By
  the Paley-Zygmund inequality and our bound on $\Var(N)$,
  there is some $\delta > 0$ such that
  for infinitely many $n$, $\Pr(N \ge \delta) \ge \delta$.
  Since $\{N > 0\} = \{\exists u \text{ with a minority}\}$, this implies that the event of having
  only majorities is not asymptotically almost sure.
\end{proof}

\section{Sufficient condition for strong consistency}\label{sec:sufficient}

The rough idea behind our strongly consistent labelling algorithm is to
first run a weakly consistent algorithm and then try to improve it.
The natural way to improve an almost-accurate labelling $\tau$
is to search for nodes $u$ that have a minority with respect to $\tau$
and flip their signs. In fact, if the errors in $\tau$ were independent
of the neighbors of $u$ then this would work quite well:
assuming that $u$ has a decently large majority (which it will, for
most $u$, by Proposition~\ref{prop:large-majority}), then having a
labelling $\tau$ with few errors is like observing each neighbor of $u$
with a tiny amount of noise. This tiny amount of noise is very unlikely
to flip $u$'s neighborhood from a majority to a minority. Therefore,
choosing $u$'s sign to give it a majority is a reasonable approach.

There are two important problems with the argument outlined in the
previous paragraph: it requires the errors in $\tau$
to be independent, and it is only guaranteed to work
for those $u$ that have a sizeable majority (i.e., almost, but not quite,
all the nodes in $G$). Nevertheless, this procedure is a good starting
point and it motivates the first clean-up stage of our
algorithm (Algorithm~\ref{alg:first}). By removing
$u$ from the graph before looking for the almost-accurate labelling $\tau$,
we ensure the required independence properties
(as a result, note that we will be dealing with multiple labellings $\tau$,
depending on which nodes we removed before running our almost-accurate labelling
algorithm). And although the final
labelling we obtain is not guaranteed to be entirely correct, we show
that it has very few (i.e., at most $n^\epsilon$) errors whereas
the initial labelling was only guaranteed to have $o(n)$ errors.

In order to finally produce the correct labelling, we
return to the earlier idea: flipping the label of every node
that has a minority. We analyze this procedure by noting that
after the previous step of the algorithm, the errors were confined to
a very particular set of nodes (namely, those without a very strong majority).
We show that this set of nodes is small and poorly connected, which
means that every node in the graph is guaranteed to only have a few neighbors
in this bad set. In particular, even nodes with relatively weak majorities
cannot be flipped by labelling errors in the bad set.
We analyze this procedure in Section~\ref{sec:second-step}.

\subsection{The initial guess}\label{sec:spectral}

As stated in the introduction,
there exist algorithms for a.a.s.\ correctly labelling all but
$o(n)$ nodes. Assuming that $p_n + q_n = \Omega(n^{-1} \log n)$, such
an algorithm is easy to describe, and we include it for completeness;
indeed, the algorithm we give is essentially
folklore, although a nice treatment is given in~\cite{NadakuditiNewman:12}.
A slightly more complex algorithm that doesn't assume
$p_n + q_n = \Omega(n^{-1} \log n)$ can be found in~\cite{YunProutiere:14}.

Note that the conditional expectation
of the adjacency matrix given the labels is $\frac{p_n + q_n}{2} 1 1^T + \frac{p_n - q_n}{2} \sigma \sigma^T$,
where $\sigma \in \{\pm 1\}^{2n}$ is the true vector of class labels.
Now, let $A$ be the adjacency matrix of $G$. Then $\sigma$ is the second eigenvector of $\E [A \mid \sigma]$,
and its eigenvalue is $\frac{p_n - q_n}{2}$. In particular, if we had access to $\E [A \mid \sigma]$ then
we could recover the labels exactly, simply by looking at its second eigenvector.
Instead, we have access only to $A$. However, if $A$ and $\E[A \mid \sigma]$ are close then
we can recover the labels by rounding the second eigenvector of $A$.

Conditioned on $\sigma$, $A - \E[A \mid \sigma]$ is a
symmetric matrix whose upper triangular part consists of independent entries, and so we can use results from random matrix
theory~\cite{Vu:07,Seginer:00} to bound its norm:
\begin{theorem}\label{thm:norm}
 If $p_n + q_n = \Omega(n^{-1} \log n)$ then there is a constant $C$ such that
 \[
  \|A - \E[A \mid \sigma]\| \le C \sqrt{n (p_n + q_n)}
 \]
 a.a.s.\ as $n \to \infty$, where $\|\cdot\|$ denotes the spectral
 norm.
\end{theorem}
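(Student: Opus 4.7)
The plan is to condition on $\sigma$, which renders $M := A - \E[A \mid \sigma]$ a $2n \times 2n$ symmetric matrix whose entries above the diagonal are independent and mean-zero. Unpacking the definition: if $\sigma_u = \sigma_v$ then $M_{uv} \in \{-p_n, 1-p_n\}$ with variance $p_n(1-p_n)$, and if $\sigma_u \neq \sigma_v$ then $M_{uv} \in \{-q_n, 1-q_n\}$ with variance $q_n(1-q_n)$. In particular $|M_{uv}| \le 1$ uniformly and $\Var(M_{uv}) \le p_n \vee q_n \le p_n + q_n =: \sigma_n^2$, while the diagonal vanishes because $G$ has no self-loops. With this bookkeeping, the statement reduces to a standard random-matrix spectral-norm estimate for symmetric matrices with independent entries, uniform sup-norm $K = 1$, and uniform variance bound $\sigma_n^2$.

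I would invoke Vu's bound from~\cite{Vu:07}, which for such matrices yields, with high probability,
\[
  \|M\| \le (2 + o(1)) \sigma_n \sqrt{2n} + O\!\bigl( (K \sigma_n)^{1/2} n^{1/4} \log n \bigr),
\]
whenever $n \sigma_n^2$ is at least a sufficiently large multiple of $\log n$. Under the hypothesis $n(p_n + q_n) = \Omega(\log n)$, the first term dominates and the right-hand side collapses to $O(\sqrt{n(p_n+q_n)})$, which is what we want. As a back-up route, Seginer's inequality~\cite{Seginer:00} bounds $\E\|M\|$ by a constant times $\E \max_i \|M_i\|_2$; each row squared-norm has mean of order $n \sigma_n^2$ and, being a sum of bounded independent random variables, concentrates by Bernstein's inequality, so a union bound over $2n$ rows yields $\max_i \|M_i\|_2 = O(\sqrt{n(p_n+q_n)})$ a.a.s. under the same hypothesis.

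The only real obstacle is the tightness of the sparsity regime. A vanilla application of matrix Bernstein gives the suboptimal rate $O(\sqrt{n(p_n+q_n)\log n})$, losing exactly the factor $\sqrt{\log n}$ that one cannot afford here, since $p_n + q_n$ may be as small as $n^{-1}\log n$. Removing this factor is precisely what the more refined trace-method argument of~\cite{Vu:07} (and, in a different guise, the Feige--Ofek discrepancy analysis) accomplishes, and the assumption $n(p_n+q_n) = \Omega(\log n)$ is the exact threshold at which those methods deliver the optimal $O(\sqrt{n(p_n+q_n)})$ bound. Once the appropriate theorem is cited, no further computation is required.
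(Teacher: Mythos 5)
Your high-level plan is the same as the paper's: condition on $\sigma$, reduce to a spectral-norm bound for a symmetric matrix $M$ with independent, mean-zero, uniformly bounded entries and variance proxy $\sigma_n^2 = p_n + q_n$, and cite random matrix theory (the paper itself offers no more than a pointer to~\cite{Vu:07,Seginer:00}). The problem is in how you invoke Vu. His Theorem~1.4 is proved under the hypothesis $\sigma \geq C' n^{-1/2} K \log^2 n$, i.e.\ $n\sigma_n^2 / K^2 = \Omega(\log^4 n)$, not $\Omega(\log n)$ as you claim. Moreover, even granting the conclusion, the two terms in $2\sigma_n\sqrt{2n} + O\bigl((K\sigma_n)^{1/2} n^{1/4} \log n\bigr)$ balance exactly at $n\sigma_n^2 = \Theta(\log^4 n)$; for $n(p_n+q_n)$ between $\log n$ and $\log^4 n$ the second term dominates and gives only $O\bigl((n(p_n+q_n))^{1/4}\log n\bigr)$, which misses the target $\sqrt{n(p_n+q_n)}$ by a polylogarithmic factor. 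That range is exactly the one the paper most needs, since $P(n,p_n,q_n)=o(n^{-1})$ can hold with $n(p_n+q_n)$ as small as $\Theta(\log n)$.

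The Seginer backup as written also does not close the gap. Seginer's theorem is stated for i.i.d.\ entries, whereas here the law of $M_{uv}$ depends on whether $\sigma_u=\sigma_v$, so one would first have to decompose $M$ into the three constant-variance blocks and argue block by block. More importantly, it controls only $\E\|M\|$; after your Bernstein-plus-union-bound step on the row norms you would still need a concentration inequality for $\|M\|$ around its expectation (bounded differences or Talagrand, giving a $+O(K\sqrt{\log n})$ fluctuation) to turn the expectation bound into an a.a.s.\ statement. You correctly name the tool that genuinely reaches the $n(p_n+q_n)=\Omega(\log n)$ threshold---a Feige--Ofek-style high-degree-vertex removal plus discrepancy argument (or an SBM-specific reference in that tradition)---but you do not carry it out, and it is not a corollary of the two theorems you actually cite.
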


Assuming Theorem~\ref{thm:norm}, note that if
$|p_n - q_n|/\sqrt{n (p_n + q_n)} \to \infty$ then $\|A - \E[A \mid \sigma]\|$ is order-wise
smaller than the second eigenvalue of $A$. By the Davis-Kahan theorem, it is possible to
recover $\sigma$ up to an error of size $o(1) \|\sigma\|$. This implies that we can recover the labels
of all but $o(n)$ vertices.

\subsection{The replica step}\label{sec:first-step}

Let {\tt BBPartition} be an algorithm that is guaranteed to a.a.s.\ label all but $o(n)$ nodes correctly;
we will use it as a black box.
Note that we may assume that {\tt BBPartition} produces an exactly
balanced labelling. If not, then if its output has more $+$ labels
than $-$ labels, say, we can randomly choose some $+$-labelled vertices
and relabel them. The new labelling is balanced, and it is
still guaranteed to have at most $o(n)$ mistakes.

\begin{figure}
\begin{algorithm}[H]
 \SetKwInOut{Input}{input}
 \SetKwInOut{Output}{output}
 \LinesNumbered

 \Input{graph $G$, parameter $\epsilon > 0$}
 \Output{a partition $W_+, W_-$ of $V(G)$}
 \BlankLine

 $W_+ \leftarrow \emptyset$\;
 $W_- \leftarrow \emptyset$\;
 choose $m \in \N$ so $(1-2/m) \epsilon - 80 m^{-1/2} \ge \epsilon/2$\;
 partition $V(G)$ randomly into $U_1, \dots, U_m$\;
 $U_+, U_- \leftarrow \mathtt{BBPartitition}(G)$\;
 \label{alg:first-initial-partition}
 \For{$i \leftarrow 1$ \KwTo $m$} {
  $U_{i,+}, U_{i,-} \leftarrow \mathtt{BBPartition}(G \setminus U_i)$\;
    \If{$|U_{i,+} \symdiff U_+| \ge n/2$ \label{alg:first-before-align}}
     {swap $U_{i,+}$ and $U_{i,-}$\; \label{alg:first-align} }
  \For{$v \in U_i$}{
    \uIf{$p > q$ and $\#\{u \in U_{i,+}: u \sim v\} > \#\{u \in U_{i,-}: u \sim v\}$}{
    \label{alg:first-label}
      $W_+ \leftarrow W_+ \cup \{v\}$\;
    }
    \uElseIf{$p < q$ and $\#\{u \in U_{i,+}: u \sim v\} < \#\{u \in U_{i,-}: u \sim v\}$}{
      $W_+ \leftarrow W_+ \cup \{v\}$\;
    }
    \Else{
      $W_- \leftarrow W_- \cup \{v\}$\;
    }
    \label{alg:first-label-end}
  }
 }
\caption{Algorithm for initial accuracy boost}
\label{alg:first}
\end{algorithm}
\end{figure}

For the remainder of Section~\ref{sec:sufficient}, we will assume
that $p \ge q$ in order to lighten our notation. The case $p < q$
is very similar, except that expressions like $\Pr(Y \ge X - \ell)$ should
be replaced by $\Pr(X \ge Y - \ell)$. We will also assume that $p \le 2/3$;
as discussed in Section~\ref{sec:explicit}, all interesting cases may be
reduced to this one.

We define $V_\epsilon$ to be a set of ``bad'' nodes
that our first step is not required to label correctly.
\begin{definition}
 Let $V_\epsilon$ be the elements of $V$ that have a majority of
 size less than $\epsilon \sqrt{n p \log n}$, or that have more than
 $100np$ neighbors.
\end{definition}

\begin{proposition}\label{prop:first-step}
  For any $\epsilon > 0$,
 Algorithm~\ref{alg:first} a.a.s.\ correctly labels every node
 in $V \setminus V_\epsilon$.
\end{proposition}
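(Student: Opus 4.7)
The plan centers on a simple independence property: since $U_{i,\pm} = \mathtt{BBPartition}(G \setminus U_i)$, its error set $E_i$ is a function only of $G|_{V \setminus U_i}$ and $\sigma|_{V \setminus U_i}$. Conditional on $\sigma$, the edges from any $v \in U_i$ to $V \setminus U_i$ are therefore independent of $E_i$; this is what makes the replica trick work, since we may analyze the label counts of $v$'s neighbors under $U_{i,\pm}$ as if $U_{i,\pm}$ were a deterministic labelling on $V \setminus U_i$ whose error set is merely constrained in size.

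First I would verify the alignment. Each invocation of {\tt BBPartition} is a.a.s.\ $o(n)$-accurate, and since there are only $m+1$ invocations with $m$ constant, a union bound ensures all of them are simultaneously good. If $U_{i,+}$ agrees in sign with $U_+$, then $|U_{i,+} \symdiff U_+| \approx |U_+ \cap U_i| + o(n) \approx n/m + o(n) < n/2$; if they have opposite signs, essentially all of $V \setminus U_i$ contributes, giving $\approx 2n(1 - 1/m) > n/2$. So line~\ref{alg:first-before-align} cleanly distinguishes the two cases, and after line~\ref{alg:first-align} the aligned labelling $U_{i,\pm}$ has error set $E_i \subseteq V \setminus U_i$ of size $o(n)$.

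Second, for $v \in V \setminus V_\epsilon$ with $v \in U_i$, I would show that $v$'s true majority survives restriction to $V \setminus U_i$. Let $X$ and $Y$ be the number of same-side and opposite-side neighbors of $v$; by definition of $V_\epsilon$, $X - Y \ge \epsilon \sqrt{np \log n}$ and $X + Y \le 100 np$. The restrictions $X_i, Y_i$ to $V \setminus U_i$ are obtained by subtracting hypergeometric random variables with means $\approx X/m$, $Y/m$ and variances $O(np/m)$. A hypergeometric tail bound combined with a union bound over $v$ yields $X_i - Y_i \ge (1 - 2/m)(X - Y) - 80\,m^{-1/2}\sqrt{np\log n}$, and the algorithm's choice of $m$ satisfying $(1 - 2/m)\epsilon - 80 m^{-1/2} \ge \epsilon/2$ then forces $X_i - Y_i \ge \epsilon \sqrt{np \log n}/2$.

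Third, I would bound the noise introduced by $E_i$. By the independence observation, conditional on $E_i$ and $\sigma$, $|N(v) \cap E_i|$ is stochastically dominated by $\Binom(|E_i|, p)$, whose mean is $o(np)$; a Chernoff bound together with a union bound over $v$ gives $|N(v) \cap E_i| < \epsilon \sqrt{np \log n}/8$ uniformly. Since each such error flips the observed count by $2$, the vote taken in lines~\ref{alg:first-label}--\ref{alg:first-label-end} differs from $X_i - Y_i$ by at most $2|N(v) \cap E_i| \le \epsilon \sqrt{np \log n}/4$, and combining with the previous bound the vote is strictly positive, so $v$ is labelled correctly. The main obstacle will be calibrating the $o(n)$ BBPartition guarantee against the $o(1/n)$ per-vertex tail probability required by the union bound; this is tightest in the critical regime $np = \Theta(\log n)$ and is likely where a more quantitative form of the spectral bound from Section~\ref{sec:spectral}, or the perturbation estimate of Proposition~\ref{prop:large-majority}, is needed.
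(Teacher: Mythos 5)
Your first two steps match the paper closely: the independence observation (edges from $v \in U_i$ to $V \setminus U_i$ are independent of the errors in $\mathtt{BBPartition}(G \setminus U_i)$) is exactly the engine of the argument, and your treatment of the alignment and of the survival of $v$'s majority under restriction to $V \setminus U_i$ agrees with the paper's Bernstein-based bound.

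The third step, however, has a real gap. You bound the \emph{total} number of mislabelled neighbors, $|N(v)\cap E_i|$, by a binomial with mean $|E_i|\,p = o(np)$, and try to conclude $|N(v)\cap E_i| \lesssim \epsilon\sqrt{np\log n}$. But $o(np)$ is in general far larger than $\sqrt{np\log n}$: in a dense regime such as $p = \Theta(1)$ with $|E_i| = n/\log\log n$ (which is perfectly consistent with the $o(n)$ guarantee on {\tt BBPartition}), the mean of $|N(v)\cap E_i|$ is $\Theta(n/\log\log n)$, while $\sqrt{np\log n} = \Theta(\sqrt{n\log n})$. No concentration inequality can push a random variable far below its own mean, so the bound you need simply does not hold. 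The location of the tightest case is therefore not $np = \Theta(\log n)$ as you suggest --- there your argument actually has a chance, since $|E_i| p = o(\log n)$ while $\sqrt{np\log n} = \Theta(\log n)$ --- but rather the dense end of the range.

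What the paper does instead is to split the mislabelled neighbors into $X_+$ (wrong labels that look like $+$) and $X_-$ (wrong labels that look like $-$), and to control the \emph{difference} $|X_+ - X_-|$ rather than the sum. The shift in the observed vote equals $2(X_+ - X_-)$, and the point is that $X_+$ and $X_-$ nearly cancel: because {\tt BBPartition} returns a balanced labelling, the number of errors on the two sides, $E_+$ and $E_-$, differ by only $O(\sqrt{n\log\log n})$, from which one gets $|\E X_+ - \E X_-| \le O(\sqrt{np}\log\log n) + o(1)\,|k_{+,\noti} - k_{-,\noti}|$. Separately, each of $X_\pm$ concentrates within $O(\sqrt{E_\pm\,p\log n}) = o(\sqrt{np\log n})$ of its mean (Bernstein for hypergeometrics). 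Both contributions are comfortably smaller than $\frac12|k_{+,\noti} - k_{-,\noti}|$, which is what the vote comparison actually requires. This cancellation is the key idea that your proposal is missing, and without it the dense case cannot be closed.
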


Before proving Proposition~\ref{prop:first-step}, we deal with a minor
technical point.
The following lemma shows that we can apply {\tt BBPartition} to subgraphs
of $G \sim \calG(2n, p_n, q_n)$, and it will still have the required guarantees.

\begin{lemma}
 If $P(n, p_n, q_n) = o(n^{-1})$ then for any $\alpha > 0$,
 $P(\lfloor \alpha n \rfloor, p_n, q_n) \to 0$.
\end{lemma}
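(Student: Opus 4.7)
I plan to deduce the claim from two elementary facts about the map $N \mapsto P(N, p, q)$: that it is non-increasing in $N$, and that it is submultiplicative in the sense $P(N, p, q)^k \le P(kN, p, q)$ for every positive integer $k$. Both follow from couplings; a convenience is that the definition of $P$ already absorbs the asymmetry between $p$ and $q$ via the $\max$ and $\min$, so no separate case analysis is needed, and I set $p^* = \max(p, q)$ and $q^* = \min(p, q)$ below.

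For monotonicity, realize $X \sim \Binom(N+1, p^*)$ and $Y \sim \Binom(N+1, q^*)$ as $X = X_N + \xi_X$ and $Y = Y_N + \xi_Y$, where $X_N \sim \Binom(N, p^*)$ and $Y_N \sim \Binom(N, q^*)$ are independent and where, for a single uniform random variable $U$ independent of $(X_N, Y_N)$, $\xi_X = \mathbbm{1}\{U \le p^*\}$ and $\xi_Y = \mathbbm{1}\{U \le q^*\}$. Because $q^* \le p^*$, this forces $\xi_X - \xi_Y \in \{0, 1\}$, so
\[
\{Y \ge X\} \;=\; \{Y_N - X_N \ge \xi_X - \xi_Y\} \;\subseteq\; \{Y_N \ge X_N\},
\]
and taking probabilities gives $P(N+1, p, q) \le P(N, p, q)$.

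For submultiplicativity, take $k$ independent pairs $(X_1, Y_1), \dots, (X_k, Y_k)$, each consisting of independent $X_j \sim \Binom(N, p^*)$ and $Y_j \sim \Binom(N, q^*)$. Then $\sum_j X_j \sim \Binom(kN, p^*)$ and $\sum_j Y_j \sim \Binom(kN, q^*)$ are independent, so
\[
P(kN, p, q) \;=\; \Pr\Bigl(\sum_{j=1}^k (Y_j - X_j) \ge 0\Bigr) \;\ge\; \Pr(Y_j \ge X_j \text{ for every } j) \;=\; P(N, p, q)^k,
\]
where the last equality uses independence across $j$.

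Putting these together: if $\alpha \ge 1$ then $\lfloor \alpha n \rfloor \ge n$ for all large $n$ and monotonicity alone gives $P(\lfloor \alpha n \rfloor, p_n, q_n) \le P(n, p_n, q_n) = o(n^{-1})$. If $\alpha < 1$, choose $k = \lceil 1/\alpha \rceil + 1$ so that $k \lfloor \alpha n \rfloor \ge n$ for all sufficiently large $n$; then submultiplicativity followed by monotonicity yields
\[
P(\lfloor \alpha n \rfloor, p_n, q_n)^k \;\le\; P(k \lfloor \alpha n \rfloor, p_n, q_n) \;\le\; P(n, p_n, q_n) \;=\; o(n^{-1}),
\]
and extracting $k$-th roots gives $P(\lfloor \alpha n \rfloor, p_n, q_n) = o(n^{-1/k}) \to 0$. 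I do not foresee any serious obstacle; the only mildly delicate point is using a \emph{common} uniform variable $U$ to define $\xi_X$ and $\xi_Y$, which is what forces $\xi_X \ge \xi_Y$ almost surely and drives the monotonicity step.
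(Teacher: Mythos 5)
Your submultiplicativity step is correct and is exactly the first of the two facts the paper uses. The monotonicity step, however, has a genuine gap, and it is instructive to see why the paper does \emph{not} prove monotonicity but settles for a weaker two-sided comparison.

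The problem is with the coupling you use to prove $P(N+1, p, q) \le P(N, p, q)$. In Definition~\ref{def:P} the random variables $X \sim \Binom(m, p^*)$ and $Y \sim \Binom(n, q^*)$ are \emph{independent} (they count edges to disjoint sets of potential neighbours, and the whole paper treats them that way; otherwise $\Pr(Y \ge X)$ would not even be well defined). In your construction you set $\xi_X = \mathbbm{1}\{U \le p^*\}$ and $\xi_Y = \mathbbm{1}\{U \le q^*\}$ with a \emph{common} uniform $U$, so $\xi_X$ and $\xi_Y$ are comonotone rather than independent. The pair $(X, Y) = (X_N + \xi_X, Y_N + \xi_Y)$ therefore has the correct marginals but is \emph{not} an independent pair, and the quantity $\Pr(Y \ge X)$ you bound is not $P(N+1, p, q)$. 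Worse, the discrepancy goes the wrong way for your purposes: with independent increments the random variable $\xi_X - \xi_Y$ is a mean-preserving spread (supported on $\{-1,0,1\}$) of your comonotone increment (supported on $\{0,1\}$), and since $t \mapsto \Pr(Y_N - X_N \ge t)$ is typically convex near $t=0$ (the mode of $Y_N - X_N$ lies to the left of $0$ when $p^* > q^*$), the true $P(N+1, p, q)$ generically exceeds your coupled $\Pr(Y \ge X)$. So the inclusion $\{Y \ge X\} \subseteq \{Y_N \ge X_N\}$ that you prove bounds the wrong quantity.

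Monotonicity of $N \mapsto P(N, p, q)$ is in fact true, but proving it is not a one-line coupling: one route is to use an independent-increment decomposition to reduce it to $(1-p^*) q^* \Pr(D_N = -1) \le p^*(1-q^*)\Pr(D_N = 0)$ where $D_N = Y_N - X_N$, and then exploit log-concavity of $D_N$ together with an exponential tilting that symmetrizes $D_N$. The paper avoids this entirely by proving only $P(n_1, p, q) \ge \tfrac12 P(n_1 + n_2, p, q)$: conditioned on $Y_1 + Y_2 \ge X_1 + X_2$, at least one of $\{Y_1 \ge X_1\}$, $\{Y_2 \ge X_2\}$ occurs, and a symmetry argument gives the factor $\tfrac12$. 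That constant loss is harmless after taking $k$-th roots, and it sidesteps the independence subtlety that trips up your coupling. You should either adopt that weaker inequality (your final deduction only needs a constant-factor comparison) or supply a correct proof of monotonicity.
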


\begin{proof}
This follows from two simple properties of the function $P$.
First, we have $P(n_1 + n_2, p, q) \ge P(n_1, p, q) P(n_2, p, q)$ for
any $n_1, n_2, p$, and $q$. Indeed, if $X_i \sim \Binom(n_i, p)$
and $Y_i \sim \Binom(n_i, q)$ are independent then
\begin{align*}
 P(n_1 + n_2, p, q)
 &= \Pr(X_1 + X_2 \le Y_1 + Y_2) \\
 &\ge \Pr(X_1 \le Y_1) \Pr(X_2 \le Y_2) \\
 &= P(n_1, p, q) P(n_2, p, q).
\end{align*}

A similar coupling argument shows that for any $n_2 \ge 0$,
$P(n_1, p, q) \ge \frac 12 P(n_1 + n_2, p, q)$. Indeed, conditioned on
$X_1 + X_2 \le Y_1 + Y_2$, the probability of $X_1 \le Y_1$ is at least $\frac 12$.
Hence,
\begin{align*}
P(n_1, p, q)
&= \Pr(X_1 \le Y_1) \\
&\ge \Pr(X_1 \le Y_1 \mid X_1 + X_2 \le Y_1 + Y_2) \Pr(X_1 + X_2 \le Y_1 + Y_2) \\
&\ge \frac 12 P(n_1 + n_2, p, q).
\end{align*}

Now, choose an integer $k$ so that $\alpha \ge 1/k$. Then
\[
 P(n, p, q) \ge \frac 12 P(2k \lfloor n/k\rfloor, p, q)
 \ge \frac 12 P(\lfloor n/k \rfloor, p, q)^{2k}
 \ge \frac 14 P(\lfloor \alpha n \rfloor, p, q)^{2k}.
\]
Since $k$ and $\alpha$ are constant as $n \to \infty$, this completes the proof.
\end{proof}

\begin{proof}[Proof of Proposition~\ref{prop:first-step}]
 First, we may assume without loss of generality that
 the partition $U_+, U_-$ that was produced in
 line~\ref{alg:first-initial-partition} is positively correlated with the true labelling
 $\sigma$.
 By our assumption on \texttt{BBPartition}, at line~\ref{alg:first-before-align} $U_{i,+}$ either agrees with $V_+ \setminus U_i$ or $V_- \setminus U_i$,
 up to an error of $o(n)$. After the relabelling in
 line~\ref{alg:first-align}, then, a.a.s.\ $U_{i,+}$
 agrees with $V_+ \setminus U_i$ up to an error of $o(n)$.
 Since $m$ is a constant independent of $n$, this property a.a.s.\ holds
 for every $i$ simultaneously.

 Now, consider a node $v \not \in V_\epsilon$ and suppose
 without loss of generality that $\sigma_v = +$. Conditioned on $v \in U_i$,
 every other node is added to $U_i$ independently with probability $1/m$.
 Hence, conditioned on $v$ having $k_+$ $+$-labelled neighbors and $k_-$
 $-$-labelled neighbors,
 it has $\Binom(k_+, 1/m)$ $+$-labelled neighbors in $U_i$
 and $\Binom(k_-, 1/m)$ $-$-labelled neighbors in $U_i$.
 Let $k_{+,i}$ denote the number of $+$-labelled neighbors that $v$
 has in $U_i$ and let $k_{+,\noti} = k_+ - k_{+,i}$
 be the number of $+$-labelled neighbors that $v$ has in
 $V \setminus U_i$ (and similarly for $-$).

 By Bernstein's inequality, with probability at least $1-2n^{-2}$,
 \begin{align}
  \label{eq:same-neighbors-1}
  k_{+,i} &\in k_+/m \pm 4 \sqrt{k_+ m^{-1} \log k_+} \\
  \label{eq:same-neighbors-2}
  k_{-,i} &\in k_-/m \pm 4 \sqrt{k_- m^{-1} \log k_-}.
 \end{align}
 Recall that $v \not \in V_\epsilon$ implies that $k_+ \le 100np$,
 $k_- \le 100np$ and
 \[k_+ - k_- \ge \epsilon\sqrt{np\log n}.\]
 Hence,~\eqref{eq:same-neighbors-1} and~\eqref{eq:same-neighbors-2} imply that
 \begin{align*}
   k_{+,\noti} - k_{-,\noti}
   &\ge (1-2/m) \epsilon\sqrt{np \log n} - 4 \sqrt{k_+m^{-1} \log k_+} - 4 \sqrt{k_- m^{-1} \log k_-} \\
  &\ge (1-2/m) \epsilon\sqrt{np \log n} - 80 m^{-1/2} \sqrt{np \log n} \\
  &\ge \frac{\epsilon}{2} \sqrt{np \log n},
 \end{align*}
 where the last inequality follows from the definition of $m$.
 Taking a union bound over the events leading to~\eqref{eq:same-neighbors-1},
 we see that a.a.s., for every $v \not \in V_\epsilon$ with $\sigma_v = +$,
 if $v \in U_i$ then
 \begin{equation}\label{eq:first-step-large-mag}
   (k_{+,\noti} - k_{-,\noti})
  \ge \frac{\epsilon}{2} \sqrt{np \log n}.
 \end{equation}
 In other words, every $v \not \in V_\epsilon$ still has a strong majority,
 even if we consider only edges between $v$ and the complement of $U_i$.

 Let $X_-$ be the number of $+$-valued neighbors of $v$ that were incorrectly
 labelled as $-$ in line~\ref{alg:first-align}
 (i.e. $X_- = |\{u : u \sim v, \sigma_u = +, u \in U_{i,-}\}|$), and let $X_+$ be the number
 of $-$-valued neighbors that were incorrectly labelled as $+$.
 Note that the quantities considered in line~\ref{alg:first-label}
 of Algorithm~\ref{alg:first} may be expressed in terms of
 $k$ and $X$ as
 \begin{align*}
   \#\{u \in U_{i,+}: u \sim v\} &= k_{+,\noti} - X_- + X_+ \\
   \#\{u \in U_{i,-}: u \sim v\} &= k_{-,\noti} + X_- - X_+.
 \end{align*}
 Hence, the inequality $|X_+ - X_-| < \frac 12 |k_{+,\noti} - k_{i,\noti}|$
 will imply that $v$ is correctly labelled in
 lines~\ref{alg:first-label}--\ref{alg:first-label-end}.
 For the rest of the proof, our goal will be to show that a.a.s.\ the
 above inequality holds for all $v \not \in V_\epsilon$.

 Let $E_- = \# \{u \in U_{i,-} : \sigma_u = +\}$
 (i.e., the total number of $+$-labelled vertices
 that were mislabelled in line~\ref{alg:first-align}) and let
 $E_+ = \# \{u \in U_{i,+} : \sigma_u = -\}$.
 Note that the neighbors of $v$ are independent of $U_{i,-}$, and so
 conditioned on $k_{+,\noti}$ and $k_{-,\noti}$,
 \begin{align*}
  X_- &\eqD \HyperGeom(|V_+ \setminus U_i|, k_{+,\noti}, E_-) \\
  X_+ &\eqD \HyperGeom(|V_- \setminus U_i|, k_{-,\noti}, E_+),
 \end{align*}
 where $V_+$ and $V_-$ are the set of $u$ with $\sigma_u = +$
 and $\sigma_u = -$, respectively.
 Now condition on $k_{+,\noti}$ and $k_{-,\noti}$,
 and on the following a.a.s.\ events:
 \begin{gather*}
   \forall i \quad |V_+ \setminus U_i| \in n(1-1/m) \pm \sqrt{n} \log \log n \\
   \forall i \quad |V_- \setminus U_i| \in n(1-1/m) \pm \sqrt{n} \log \log n \\
   |E_- - E_+| \le \sqrt {n \log \log n}.
 \end{gather*}
 Under the above events, and recalling that $k_+ \le 100np$,
 \begin{align*}
  |\E X_- - \E X_+|
  &= \left|
  E_- \frac{k_{+,\noti}}{|V_+ \setminus U_i|}
  - E_+ \frac{k_{-,\noti}}{|V_- \setminus U_i|}
  \right| \\
  &\le \left|
  E_- \frac{k_{+,\noti}}{n(1-1/m)}
  - E_+ \frac{k_{-,\noti}}{n(1-1/m)}
  \right|
  + O(n^{1/2} p \log \log n) \\
  &\le O(n^{-1}) |E_- - E_+| k_+
     + O(n^{-1}) E_+ \big|k_{+,\noti} - k_{-,\noti}\big|
     + O(\sqrt{np} \log \log n) \\
  &\le O(\sqrt{np} \log \log n)
     + o(1) \big|k_{+,\noti} - k_{-,\noti}\big|,
 \end{align*}
 Going back to~\eqref{eq:first-step-large-mag},
 we see that a.a.s.\ for all $v \not \in V_\epsilon$,
 \[
   |\E X_- - \E X_+| \le \frac{1}{8} |k_{+,\noti} - k_{-,\noti}|.
 \]

 Next, we consider the deviations of $X_-$ and $X_+$ around their means.
 By Bernstein's inequality for hypergeometric variables, there is a constant
 $C$ such that with probability $1-n^{-2}$, $X_-$ is within
 \[
  C \sqrt{E_- \frac{k_{+,\noti}}{|V_+ \setminus U_i|} \log E_-}
  \le C' \sqrt{E_- p \log n}
 \]
 of its expectation. Since $E_- = o(n)$, we can take $n$ large enough
 so that $X_-$ is within $\frac{\epsilon}{16}\sqrt{np \log n}$ of its expectation
 with probability $1 - n^{-2}$. Arguing similarly for $X_+$ we have
 \begin{align*}
  |X_- - X_+|
  &\le |\E X_- - \E X_+| + |X_- - \E X_-| + |X_+ - \E X_+| \\
  &\le \frac{1}{8} |k_{+,\noti} - k_{-,\noti}| + \frac{\epsilon}{8} \sqrt{np \log n}
 \end{align*}
 with probability $1-2n^{-2}$. Taking a union bound over $v \not \in V_\epsilon$ (recall that $X$ and $k$ both depend on $v$), we see that the above
 inequality holds a.a.s.\ for all $v \not \in V_\epsilon$ simultaneously.
 By~\eqref{eq:first-step-large-mag}, a.a.s.\ for all $v \in V_\epsilon$,
 \[
   |X_- - X_+| \le \frac 38 |k_{+,\noti} - k_{-,\noti}|,
 \]
 which completes the proof.
\end{proof}

\subsection{The hill-climbing step}\label{sec:second-step}

After running Algorithm~\ref{alg:first}, we are left with a graph
in which only nodes belonging to $V_\epsilon$ could possibly be mis-labelled.
Fortunately, very few nodes belong to $V_\epsilon$, and those that do are poorly
connected to the rest of the graph. This is the content of the next
two propositions.

\begin{proposition}\label{prop:Ve-size}
 For every $\delta > 0$ there exists an $\epsilon > 0$ such that
 if $P(n, p, q) = o(n^{-1})$ then
 $|V_\epsilon| \le n^\delta$ a.a.s.
\end{proposition}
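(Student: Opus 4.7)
The plan is to split $V_\epsilon$ into two pieces and bound each by a first-moment argument: a degree-bad set $D = \{v : \deg(v) > 100np\}$ and a majority-bad set $M = V_\epsilon \setminus D$, so that $M$ consists of nodes with majority less than $\ell := \epsilon\sqrt{np\log n}$ and degree at most $100np$. A useful preliminary observation is that the hypothesis $P(n,p,q) = o(n^{-1})$ forces $np \geq \tfrac{1}{2}\log n$ for all large $n$: otherwise $\Pr(\Binom(n,p)=0) = (1-p)^n = \Omega(n^{-1/2})$ would already exceed $P(n,p,q)$. This ensures we are always in the range of validity of one of the two perturbation estimates from Section~3.

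For the degree part $D$, assuming (WLOG) $p \geq q$, each $\deg(v)$ is stochastically dominated by $\Binom(2n-1, p)$ with mean at most $2np$. A standard Chernoff upper tail of the form $\Pr(\Binom(N,p)\geq t\mu) \leq (e/t)^{t\mu}$ with $t = 50$ gives $\Pr(\deg v > 100np) \leq n^{-10}$ (say), once $np \geq \tfrac{1}{2}\log n$. A union bound over the $2n$ vertices makes $D = \emptyset$ a.a.s.

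For the majority part $M$, fix a node $v$ with $\sigma_v = +$, let $X \sim \Binom(n-1, p)$ count its $+$-neighbors and $Y \sim \Binom(n, q)$ count its $-$-neighbors. The event that $v$ has majority less than $\ell$ is $\{Y \geq X - \ell + 1\}$, and Lemma~\ref{lem:one-less} gives $\Pr(Y \geq X) = P(n-1, n, p, q) = o(n^{-1})$. In the dense regime $np \geq 64\log n$, Proposition~\ref{prop:large-majority} yields
\[
\Pr(Y \geq X - \ell + 1) \leq \Pr(Y \geq X)\,\exp\!\Bigl(C\ell\sqrt{\tfrac{\log n}{np}}\Bigr) + 2n^{-2} \leq o(n^{-1})\cdot n^{C\epsilon} + O(n^{-2}).
\]
In the sparse regime $\tfrac12\log n \leq np \leq 128\log n$ we have $\ell \leq \log n$ for $\epsilon$ small, and Proposition~\ref{prop:large-majority-sparse} gives
\[
\Pr(Y \geq X - \ell + 1) \leq \bigl(C\log n/\ell\bigr)^{C\ell}\Pr(Y \geq X) = n^{O(\epsilon\log(1/\epsilon))}\cdot o(n^{-1}).
\]
Choosing $\epsilon$ small enough that both $C\epsilon$ and $C\epsilon\log(1/\epsilon)$ are at most $\delta/2$ makes the one-vertex probability $o(n^{-1+\delta/2})$, so $\E|M| = o(n^{\delta/2})$, and Markov's inequality gives $|M| \leq n^\delta$ a.a.s. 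Combined with $D = \emptyset$ a.a.s., this yields $|V_\epsilon| \leq n^\delta$ a.a.s.

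The main technical obstacle is the case split at the regime boundary: Proposition~\ref{prop:large-majority} is effective only for $np \gg \log n$, while Proposition~\ref{prop:large-majority-sparse} is effective only for $np \asymp \log n$, so one must check that $\ell = \epsilon\sqrt{np\log n}$ lies in the usable range of whichever estimate one invokes (in particular that $\ell \leq \log n$ in the sparse case). Once that bookkeeping is done, both estimates have the same qualitative effect: they convert the $o(n^{-1})$ baseline on $\Pr(Y \geq X)$ into an $o(n^{-1+c(\epsilon)})$ bound on $\Pr(Y \geq X - \ell)$ with $c(\epsilon) \to 0$ as $\epsilon \to 0$, which is exactly the slack a first-moment argument can absorb.
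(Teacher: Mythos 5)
Your proposal is correct and follows essentially the same route as the paper's proof: bound the degree-heavy vertices by a tail bound, bound the small-majority vertices via the two perturbation estimates (Proposition~\ref{prop:large-majority} in the dense regime, Proposition~\ref{prop:large-majority-sparse} in the sparse regime), and apply Markov after choosing $\epsilon$ small enough that the exponent $c(\epsilon)$ stays below $\delta$. The only cosmetic difference is that you split $V_\epsilon$ into an explicit disjoint union $D \cup M$ and show $D=\emptyset$ a.a.s., whereas the paper simply conditions on the a.a.s.\ event that every vertex has at most $100np$ neighbors; the case split at the regime boundary, the choice $\ell = \epsilon\sqrt{np\log n}$, and the observation that $\ell \lesssim \log n$ in the sparse range all match the paper.
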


\begin{proof}
 Consider a single $v \in V$. By Bernstein's inequality
 the probability that $v$ has $100np$ neighbors is less than $n^{-2}$
 (using $np \ge \log n$, which follows from $P(n, p, q) = o(n^{-1})$).
 Hence, a.a.s.\ every $v$ has at most $100np$ neighbors.

 It remains to show that a.a.s.\ at most $n^\delta$ vertices
 fail to have a majority of size $\epsilon \sqrt{np \log n}$.
 Now, if $np \ge 64 \log n$ then Proposition~\ref{prop:large-majority}
 with $\ell = \epsilon \sqrt{np \log n}$ implies that
 if $Y \sim \Binom(n, q)$ and $X \sim \Binom(n-1, p)$ then
 \[
  \Pr(Y \ge X - \epsilon \sqrt{np \log n}) \le 2n^{-2} + O(n^{-1 + C \epsilon}).
 \]
 In particular, if $C\epsilon < \delta$ then the right hand size is
 $o(n^{-1 + \delta})$. By Markov's inequality, this implies that a.a.s.\ at most
 $n^{\delta}$ nodes fail to have a majority of size
 $\epsilon \sqrt{np \log n}$.

 In the sparse case (i.e. $\frac 12 \log n \le np \le 128 \log n$),
 Proposition~\ref{prop:large-majority-sparse} with
 $\ell = \epsilon \sqrt{np \log n} = \Theta(\epsilon \log n)$ yields
 \[
  \Pr(Y \ge X - \epsilon \sqrt{np \log n})
  \le (2C/\epsilon)^{C \epsilon \log n} n^{-1}.
 \]
 Since $(2/\epsilon)^{\epsilon} \to 1$ as $\epsilon \to 0$, we may choose
 $\epsilon$ so that $(2C/\epsilon)^{C \epsilon \log n} \le n^{\delta/2}$.
 By Markov's inequality, we see that at most $n^\delta$ nodes
 fail to have a majority of size $\epsilon \sqrt{np \log n}$.
\end{proof}

\begin{proposition}\label{prop:Ve-neighbors}
  Suppose that $P(n, p, q) = o(n^{-1})$ and $np \le n^{1/4}$.
  For sufficiently small $\epsilon$, a.a.s.\ no node has two or more neighbors in $V_\epsilon$.
\end{proposition}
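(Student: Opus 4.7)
The plan is a first moment argument. Let $N'$ count ordered triples $(w,u,v)$ of distinct vertices such that $w \sim u$, $w \sim v$, and $u, v \in V_\epsilon$. Since $\{N' = 0\}$ is contained in the event that no vertex has two neighbors in $V_\epsilon$, it suffices by Markov's inequality to show $\E N' \to 0$.

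For a fixed triple $(w,u,v)$, I would condition on the statuses of the three potential edges $\{u,v\}$, $\{u,w\}$, $\{v,w\}$ inside the triple. Once these are fixed, the events $A_u := \{u \in V_\epsilon\}$ and $A_v := \{v \in V_\epsilon\}$ are determined by disjoint collections of remaining edges (those from $u$ to $V \setminus \{u,v,w\}$ and those from $v$ to $V \setminus \{u,v,w\}$), so they become conditionally independent. Conditioning on at most two forced edges at $u$ merely shifts the majority threshold in the definition of $V_\epsilon$ by at most $2$. By Proposition~\ref{prop:large-majority} in the dense regime $np \ge 64 \log n$ (where the shift factor $e^{C\sqrt{\log n/(np)}}$ is $O(1)$) or Proposition~\ref{prop:large-majority-sparse} in the sparse regime $np \asymp \log n$ (where the shift factor is polylogarithmic), this changes $\Pr(A_u)$ by at most a polylogarithmic factor plus an additive $O(n^{-2})$. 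Combined with the bound $\Pr(A_u) \le n^{-1+\eta}$ that is implicit in the proof of Proposition~\ref{prop:Ve-size} (valid for any $\eta > 0$ provided $\epsilon$ is small enough), I obtain
\[
\Pr(A_u \mid \text{conditioning}) \le n^{-1+2\eta}
\]
and the analogous bound for $A_v$.

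Putting the pieces together yields
\[
\Pr(w \sim u,\, w \sim v,\, A_u,\, A_v) \le p^2 \cdot n^{-2+4\eta}.
\]
Summing over at most $n^3$ ordered triples and using $np \le n^{1/4}$, so that $p^2 \le n^{-3/2}$, I get $\E N' \le n^{1+4\eta} \, p^2 \le n^{-1/2 + 4\eta}$, which tends to $0$ as long as $\eta < 1/8$. The degree side of $V_\epsilon$ contributes at most $n^{-2}$ per vertex by Bernstein's inequality, so it is swallowed by the majority side throughout.

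The main obstacle I anticipate is the conditioning bookkeeping: carefully checking that after fixing the three intra-triple edges, $A_u$ and $A_v$ genuinely depend on disjoint edge sets so they factor, and applying Propositions~\ref{prop:large-majority} and~\ref{prop:large-majority-sparse} with the correct sign and size of the threshold shift. The split between the dense and sparse regimes requires the two propositions to be invoked separately, but in both cases the shift by at most $2$ costs only a polylogarithmic factor, which is harmlessly absorbed into $n^{\eta}$.
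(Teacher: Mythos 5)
Your argument is correct, and it reaches the same $n^{-\Theta(1)}$ expected count, but it is organized differently from the paper's proof, so it is worth comparing the two. The paper works with unordered pairs $\{u,v\}$ rather than triples. To bound $\Pr(u,v \in V_\epsilon)$ it uses a ``threshold doubling'' trick: if $v$ has a majority of size $2\epsilon\sqrt{np\log n}$ on the edges \emph{excluding} the potential edge $\{u,v\}$, then $v \notin V_\epsilon$ regardless of that edge; this event depends on edges disjoint from those determining $u \in V_\epsilon$, so the two events factor without any conditional perturbation lemma. That gives $\Pr(u,v\in V_\epsilon)\le n^{-7/4}$. It then conditions on $u,v\in V_\epsilon$ and the a.a.s.\ degree bound $\deg \le 100np \le 100n^{1/4}$, and observes that given the \emph{counts} of $\pm$-neighbors, the neighbor \emph{locations} are uniform, so the common-neighbor probability is $O(n^{-1/2})$; a union bound over $n^2$ pairs finishes. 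Your route instead sums over ordered triples $(w,u,v)$, conditions on all three intra-triple edges to decouple $A_u$ and $A_v$, absorbs the $\pm 2$ shift in the majority threshold via Propositions~\ref{prop:large-majority} and~\ref{prop:large-majority-sparse}, and closes with $p^2 \le n^{-3/2}$. Both are valid; your version is conceptually uniform (one first-moment computation) at the cost of tracking the edge-conditioning and re-invoking the perturbation estimates, while the paper's version avoids conditional perturbation entirely by choosing the threshold so that one extra edge cannot flip the classification, and handles the common neighbor as a separate clean exchangeability step. One small point you should make explicit if you write this up: for the conditional independence of $A_u$ and $A_v$ you should also condition on the labelling $\sigma$ (so that the remaining edges are genuinely independent Bernoullis), and verify that your bound $\Pr(A_u \mid \sigma, \text{intra-triple edges}) \le n^{-1+2\eta}$ holds uniformly over $\sigma$ and the $\le 8$ intra-triple configurations; this follows from Lemma~\ref{lem:one-less} together with the perturbation estimates, but it is the step most in need of care.
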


\begin{proof}
  Fix $u, v \in V$; let $X \sim \Binom(n-1, p)$
  and $Y \sim \Binom(n,q)$. As in the proof of
 Proposition~\ref{prop:Ve-size}, a.a.s.\ every $v \in V$ has at most
 $100np$ neighbors; for the rest of the proof, we condition on this
 event. Moreover, we may choose $\epsilon$
 small enough so that $\Pr(Y \ge X - \epsilon \sqrt{np \log n}) \le n^{-7/8}$.
 In particular, that means that $\Pr(u \in V_\epsilon) \le n^{-7/8}$. Now
 condition on the neighbors of $u$. If $v$ has a majority of
 $2\epsilon \sqrt{np \log n}$ on all edges except for $u$, then it lies outside
 of $V_\epsilon$ regardless of whether it neighbors $u$. But this event
 is independent of whether $u \in V_\epsilon$, and if $\epsilon$ is sufficiently
 small then it has probability
 at least $1-n^{-7/8}$. Hence, $\Pr(u, v \in V_\epsilon) \le n^{-7/4}$.

 Now condition on the event that $u, v \in V_\epsilon$.
 Recall that $u$ and $v$ each
 have at most $100np \le 100n^{1/4}$ neighbors in $V_-$ and at most
 $100n^{1/4}$ neighbors in $V_+$. Conditioned on the number of neighbors in $V_-$
 and $V_+$, the neighbors of $u$ and $v$ are independent and uniformly distributed.
 Hence, the probability that they have a common neighbor is $O(n^{-3/4 - 3/4 + 1})
 = O(n^{-1/2})$. Combining this with the previous paragraph, we have
 \[
  \Pr(u, v \in V_\epsilon \text{ and they have a common neighbor})
  = O(n^{-9/4}).
 \]
 Taking a union bound over $n^2$ choices of $u$ and $v$ completes the proof.
\end{proof}

\begin{proposition}\label{prop:Ve-adjacent}
 Suppose that $np \le n^{1/4}$. For sufficiently small $\epsilon$,
 a.a.s.\ no two nodes in $V_\epsilon$ are adjacent.
\end{proposition}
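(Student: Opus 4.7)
The plan is a first moment argument. I want to show that the expected number of adjacent pairs $(u,v)$ with both endpoints in $V_\epsilon$ is $o(1)$, and then invoke Markov. Fix an ordered pair $u \ne v$; the goal reduces to proving
\[
  \Pr(u \sim v,\ u \in V_\epsilon,\ v \in V_\epsilon) = o(n^{-2}),
\]
uniformly in $u,v$, after which a union bound over the $O(n^2)$ pairs closes the argument.

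The key point is an independence/decoupling step analogous to the one used in Proposition~\ref{prop:Ve-neighbors}. I would define $B_u$ to be the event that, counting only edges from $u$ to $V \setminus \{v\}$, $u$ either has at least $100np$ neighbors or fails to enjoy a majority of size $2\epsilon\sqrt{np\log n}$; define $B_v$ symmetrically. Because removing or adding a single edge shifts the majority by at most one, the threshold choice $2\epsilon$ (versus $\epsilon$) guarantees $\{u \in V_\epsilon\} \subseteq B_u$ and $\{v \in V_\epsilon\} \subseteq B_v$ for all sufficiently large $n$. Crucially, $B_u$ depends only on edges from $u$ to $V \setminus \{u,v\}$, the event $B_v$ depends only on edges from $v$ to $V \setminus \{u,v\}$, and $\{u \sim v\}$ depends only on the edge $uv$; conditional on the label vector $\sigma$ these three sets of edges are disjoint, so the three events are mutually independent.

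Consequently
\[
  \Pr(u \sim v,\ u \in V_\epsilon,\ v \in V_\epsilon)
  \le \Pr(u \sim v)\, \Pr(B_u)\, \Pr(B_v) \le p\, \Pr(B_u)\, \Pr(B_v).
\]
For $\Pr(B_u)$ I would repeat the computation from the proof of Proposition~\ref{prop:Ve-neighbors}: Proposition~\ref{prop:large-majority} in the regime $np \ge 64 \log n$, and Proposition~\ref{prop:large-majority-sparse} in the sparse regime $\tfrac12 \log n \le np \le 128 \log n$, both give $\Pr(B_u) \le n^{-7/8}$ provided $\epsilon$ is chosen sufficiently small (the $100np$-neighbor piece contributes negligibly by Bernstein). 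The same bound holds for $\Pr(B_v)$.

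Combining these estimates with the hypothesis $np \le n^{1/4}$, i.e.\ $p \le n^{-3/4}$, the union bound gives
\[
  \sum_{u \ne v} \Pr(u \sim v,\ u \in V_\epsilon,\ v \in V_\epsilon)
  \le n^2 \cdot p \cdot n^{-7/4} \le n^{5/4 - 7/4} = n^{-1/2} = o(1),
\]
so a.a.s.\ no adjacent pair lies inside $V_\epsilon$. The only subtle point—more a bookkeeping obstacle than a conceptual one—is the decoupling: one must be careful that the slightly inflated threshold $2\epsilon\sqrt{np\log n}$ in the definition of $B_u$ still fits inside the ranges of $\ell$ required by Propositions~\ref{prop:large-majority} and~\ref{prop:large-majority-sparse}, and that the $100np$-neighbor clause in the definition of $V_\epsilon$ is harmless (which follows from Bernstein, exactly as in the first paragraph of the proof of Proposition~\ref{prop:Ve-size}).
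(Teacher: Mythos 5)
Your proof is correct and follows essentially the same route as the paper: a union bound over pairs $(u,v)$, with the per-pair probability bounded by the product of $p$, $\Pr(u \in V_\epsilon)$, and $\Pr(v \in V_\epsilon)$, each of the latter two controlled via Propositions~\ref{prop:large-majority} and~\ref{prop:large-majority-sparse}. Your explicit decoupling via $B_u$, $B_v$ (with the $2\epsilon$ slack) is the same device the paper uses in the proof of Proposition~\ref{prop:Ve-neighbors} and then implicitly invokes here with ``as in the previous proof''; you have merely spelled it out.
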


\begin{proof}
 Fix $u, v \in V$. The probability that they are adjacent
 is at most $p \le n^{-3/4}$.
 As in the previous proof, if $\epsilon$ is small enough
 then $\Pr(u \in V_\epsilon \mid u \sim v)$
 and $\Pr(v \in V_\epsilon \mid u \sim v, u \in V_\epsilon)$
 are both at most $n^{-7/8}$.
 Multiplying these conditional probabilities, we have
 \[
  \Pr(u, v \in V_\epsilon \text{ and } u \sim v)
  = O(n^{-5/2}),
 \]
 and we conclude by taking a union bound over $u$ and $v$.
\end{proof}

\begin{algorithm}
 \SetKwInOut{Input}{input}
 \SetKwInOut{Output}{output}
 \SetKwFor{Repeat}{repeat}{do}{end}
 \LinesNumbered

 \Input{graph $G$, an initial partition $U_+, U_-$ of $V(G)$}
 \Output{a partition $W_+, W_-$ of $V(G)$}
 \BlankLine

  $W_+ \leftarrow \{v \in V(G): \text{$v$ has more neighbors in $U_+$ than in $U_-$}\}$\;
  $W_- \leftarrow V(G) \setminus W_+$\;

\caption{Algorithm for final labelling}
\label{alg:second}
\end{algorithm}

\begin{proposition}\label{prop:alg-works}
 Suppose that we initialize Algorithm~\ref{alg:second} with a partition whose
 errors are restricted to $V_\epsilon$, and suppose that $P(n, p_n, q_n) = o(n^{-1})$.
 Then a.a.s., Algorithm~\ref{alg:second} returns the true partition.
\end{proposition}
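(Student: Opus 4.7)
My plan is to reduce the correctness of Algorithm~\ref{alg:second} on a single vertex $v$ to the inequality $M_v > 2 N_v$, where $M_v$ denotes the majority of $v$ under the true labelling $\sigma$ and $N_v$ is the number of neighbors of $v$ lying in $V_\epsilon$. This inequality suffices because every neighbor of $v$ on which the input partition $U_+,U_-$ disagrees with $\sigma$ must lie in $V_\epsilon$, and flipping a single neighbor's label changes the signed $(U_+,U_-)$-neighbor difference by exactly $2$; so the worst-case perturbation to that difference is $2N_v$, which must be strictly smaller than $M_v$ for $v$ to end up on the correct side.

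To obtain a lower bound on $M_v$ that holds uniformly in $v$, I would apply Proposition~\ref{prop:large-majority} with $m = n-1$ and $\ell$ proportional to $\sqrt{np/\log n}$: combined with Lemma~\ref{lem:one-less} (to pass from $P(n-1,n,p,q)$ back to $P(n,p_n,q_n)$), this yields $\Pr(M_v \le c\sqrt{np/\log n}) = O(P(n,p_n,q_n)) = o(n^{-1})$ for a small constant $c$, so a union bound gives $M_v \ge c\sqrt{np/\log n}$ a.a.s.\ for every $v$. I would separately record that Proposition~\ref{prop:majorities} already guarantees $M_v \ge 1$ a.a.s.\ for every $v$.

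The argument then splits by density. In the dense regime $np \ge n^{1/4}$, the universal bound above reads $M_v \ge c n^{1/8}/\sqrt{\log n}$, while Proposition~\ref{prop:Ve-size} lets us pick $\epsilon$ small enough that $|V_\epsilon| \le n^\delta$ a.a.s.\ for, say, $\delta = 1/16$; since trivially $N_v \le |V_\epsilon|$, the inequality $M_v > 2N_v$ follows for large $n$. In the sparse regime $np \le n^{1/4}$, I would instead use Propositions~\ref{prop:Ve-neighbors} and~\ref{prop:Ve-adjacent}: for $v \in V_\epsilon$, Proposition~\ref{prop:Ve-adjacent} forces $N_v = 0$ a.a.s., so the minimal bound $M_v \ge 1$ is already enough; for $v \notin V_\epsilon$, Proposition~\ref{prop:Ve-neighbors} forces $N_v \le 1$ a.a.s., while the definition of $V_\epsilon$ supplies $M_v \ge \epsilon\sqrt{np\log n} \ge \epsilon\sqrt{c}\log n$, which exceeds $2$ for large $n$ (using that $P(n,p_n,q_n) = o(n^{-1})$ forces $np \ge c\log n$).

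The main obstacle is the dense case applied to vertices \emph{inside} $V_\epsilon$: the definition of $V_\epsilon$ only forces $M_v \ge 1$, while such a vertex might have as many as $|V_\epsilon| \le n^\delta$ neighbors in $V_\epsilon$, so Propositions~\ref{prop:Ve-neighbors} and~\ref{prop:Ve-adjacent} (being stated only for $np \le n^{1/4}$) offer no help. The fix is the uniform lower bound $M_v \ge c\sqrt{np/\log n}$ extracted from Proposition~\ref{prop:large-majority}, which dominates $n^\delta$ throughout the dense regime and applies even to vertices of $V_\epsilon$, so no further structural result about $V_\epsilon$ beyond its size is needed.
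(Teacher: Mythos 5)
Your proof is correct and matches the paper's argument in substance: both split by density at $np = n^{1/4}$, use Proposition~\ref{prop:large-majority} plus Proposition~\ref{prop:Ve-size} in the dense case, and use Propositions~\ref{prop:Ve-neighbors} and~\ref{prop:Ve-adjacent} in the sparse case. The only cosmetic difference is that in the sparse regime the paper introduces an auxiliary set $V' = \{v : M_v < 3\} \subset V_\epsilon$ and splits on $V'$ versus $V'^c$, whereas you split directly on $V_\epsilon$ versus $V_\epsilon^c$; both versions invoke the same two propositions and yield the same conclusion.
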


\begin{proof}
  We consider two cases: the dense regime $n^{1/4} \le np \le 2n/3$, and the sparse
 regime $\frac 12 \log n \le np n^{1/4}$.

 In the dense regime, note that by Proposition~\ref{prop:large-majority},
 a.a.s.\ every node has a majority of $\Omega(\sqrt{np/\log n}) \ge \Omega(n^{1/9})$.
 On the other hand, if $\epsilon$ is sufficiently small then
 (by Proposition~\ref{prop:Ve-size}) $|V_\epsilon| \le n^{1/10}$, which implies that
 every node in $V_+$ will have most of its neighbors in $U_+$.
 Therefore, $W_+ = V_+$ in Algorithm~\ref{alg:second}.

 In the sparse regime, let $V'$ be the set
 of nodes with a majority of less than three; note that $V' \subset V_\epsilon$.
 By Proposition~\ref{prop:Ve-neighbors},
 a.a.s.\ every node has at most one neighbor in $V_\epsilon$, which
 implies that every node in $V_+ \setminus V'$ has most of its neighbors in $U_+$;
 hence every node outside of $V'$ will be correctly labelled.
 On the other hand,
 Proposition~\ref{prop:Ve-adjacent} shows that nodes in $V'$ are also correctly
 labelled, since none of them have any neighbors in $V_\epsilon$ (recalling
 that $V' \subset V_\epsilon$).
\end{proof}

\section{Necessary condition for strong consistency}

\label{sec:necessary-strong}

A classical fact in Bayesian statistics says that if we are asked to produce
a configuration $\hat \sigma$ from the graph $G$, then the algorithm
with the highest probability of success is the \emph{maximum
a posteriori} estimator, $\hat \sigma$, which is defined to be
any $\tau \in \{-1, 1\}^{V(G)}$ satisfying $\sum_u \tau_u = 0$
that maximizes $\Pr(G \mid \sigma = \tau)$.
(To see that this is the estimator with the highest probability of success,
note that every $\tau$ that maximizes $\Pr(G \mid \sigma = \tau)$ 
also maximizes $\Pr(\sigma = \tau \mid G)$; clearly, a $\tau$ that
maximizes the latter quantity is an optimal estimate.)
In order to prove that $P(n, p_n, q_n) = o(n^{-1})$ is necessary for
strong consistency, we relate the success probability of
$\hat \sigma$ to the existence of nodes with minorities.
Note that we say $v$ has a majority with respect to
$\tau$ if (assuming $p > q$) $\tau$ gives the same label to $v$ as
it does to most of $v$'s neighbors.

\begin{lemma}\label{lem:MAP-minority}
If there is a unique maximal $\hat \sigma$ then
with respect to $\hat \sigma$, there cannot be both a $+$-labelled node
with a minority and a $-$-labelled node with a minority.
\end{lemma}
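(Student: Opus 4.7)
The plan is to reduce MAP estimation to the combinatorial problem of maximising the number of within-class edges, and then, assuming both kinds of minorities exist, to exhibit an alternative balanced labelling with at least as many within-class edges, contradicting uniqueness.

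First I would simplify the form of the posterior likelihood. For any balanced $\tau$, the total number of same-label vertex pairs $n(n-1)$ and different-label pairs $n^2$ do not depend on $\tau$. Letting $e_s(\tau)$ denote the number of edges whose endpoints share a $\tau$-label and $E$ the total number of edges, collecting terms gives
\[
\log \Pr(G \mid \sigma = \tau) \;=\; e_s(\tau)\,\log \frac{p(1-q)}{q(1-p)} \;+\; \text{const}.
\]
By the symmetry $p \leftrightarrow q$, $\sigma \leftrightarrow -\sigma$ it suffices to treat $p > q$; the coefficient is then strictly positive, so $\hat\sigma$ is exactly the balanced labelling maximising $e_s$.

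Next, suppose for contradiction that $\hat\sigma$ is the unique maximiser and that there is both a node $u$ with $\hat\sigma_u = +$ having a minority and a node $v$ with $\hat\sigma_v = -$ having a minority. Define $\tau$ by flipping the labels of $u$ and $v$ and leaving all other labels untouched; then $\tau$ is balanced and $\tau \ne \pm\hat\sigma$, so uniqueness of $\hat\sigma$ forces $e_s(\tau) < e_s(\hat\sigma)$, and the objective is to show the reverse. Since only edges incident to $\{u, v\}$ can change same/diff status, a careful case analysis—splitting into $N(u) \setminus \{v\}$, $N(v) \setminus \{u\}$, and the edge $uv$ itself (which is cross-label in both labellings and hence contributes nothing)—yields
\[
e_s(\tau) - e_s(\hat\sigma) \;=\; \bigl(d_-(u) - d_+(u)\bigr) + \bigl(d_+(v) - d_-(v)\bigr) - 2\,\mathbf{1}[u \sim v],
\]
where $d_{\pm}(w)$ denotes the number of neighbours of $w$ labelled $\pm$ under $\hat\sigma$. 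The minority hypotheses give $d_-(u) \ge d_+(u)$ and $d_+(v) \ge d_-(v)$, so both bracketed terms are non-negative. When $u$ and $v$ are not adjacent this already forces $e_s(\tau) \ge e_s(\hat\sigma)$ and contradicts uniqueness.

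The main technical obstacle is the adjacent case $u \sim v$, where the $-2$ correction from the shared edge can a priori make the computed difference negative. To close this subcase I would exploit the fact that the edge $uv$ forces $v$ into $d_-(u)$ and $u$ into $d_+(v)$, so that the minority inequalities, once those forced contributions are separated out, provide exactly the extra slack needed to absorb the $-2$; if the slack is insufficient, a different local move—such as swapping $u$ with a non-adjacent $-$-labelled vertex, or pairing the two minorities with distinct partners—should still produce a competing maximiser. This adjacent subcase is where I expect the bulk of the technical work to lie.
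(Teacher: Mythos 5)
Your approach is the same as the paper's: swap the two opposite-labelled minority nodes to form $\tau'$ and compare same-label edge counts. The formula you derive,
\[
e_s(\tau') - e_s(\hat\sigma) \;=\; \bigl(d_-(u)-d_+(u)\bigr) + \bigl(d_+(v)-d_-(v)\bigr) - 2\,\mathbf{1}[u\sim v],
\]
is exactly the computation underlying the paper's assertion that $|E(G)\cap A_{\tau'}| \ge |E(G)\cap A_{\hat\sigma}|$. The paper treats the adjacent case only via the parenthetical "equality is possible $\ldots$ if $u$ and $v$ are neighbors," which implicitly takes \emph{minority} to mean the strict inequality $d_-(u) > d_+(u)$. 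Under that reading each bracket is $\ge 1$, the $-2$ is absorbed at once, and the adjacent case closes with no further work. Your first idea (separating out the forced contribution of the edge $uv$) is the right instinct but unnecessarily elaborate, and your second idea (repartnering $u$ with a non-adjacent vertex) is not needed: strictness alone supplies the slack.

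That said, your caution is warranted, and in fact you have identified a genuine soft spot in the paper. Under the paper's own Definition~\ref{def:majority}, a minority means "not a majority of size one," which \emph{includes} ties $d_- = d_+$. With ties, both brackets can vanish, the swap \emph{decreases} $e_s$ by $2$, and the stated lemma is then false: on the path $w\text{--}u\text{--}v\text{--}x$ the unique MAP bisection is $\{u,w\}\,|\,\{v,x\}$ yet both $u$ and $v$ are tied, hence both are minorities of opposite sign. So either the lemma tacitly requires strict minorities (as the parenthetical suggests, and as suffices for the downstream application), or the adjacent-tie subcase needs a separate argument that neither your sketch nor the paper provides. Your plan is essentially complete once you commit to the strict reading; what you flagged as "the bulk of the technical work" is actually a one-line observation, but the reason it looks hard is that, taken at face value, the definition of minority in the paper does not deliver the inequality the proof needs.
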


\begin{proof}
For convenience, we will assume that $p > q$. The same
proof works for $p < q$, but one needs to remember that the definition
of ``majority'' and ``minority'' swap in that case
(Definition~\ref{def:majority}).

The probability of $G$ conditioned on the labelling $\tau$
may be written explicitly: if $A_\tau$ is the set of unordered pairs
$u \ne v$ with $\tau_u = \tau_v$ and $B_\tau$ is the set of unordered pairs
$u \ne v$ with $\tau_u \ne \tau_v$ then
\begin{align}
 \Pr(G \mid \sigma = \tau)
 &= p^{|E(G) \cap A_\tau|}
   q^{|E(G) \cap B_\tau|}
   (1-p)^{|A_\tau \setminus E(G)|}
   (1-q)^{|B_\tau \setminus E(G)|} \notag \\
 &= (1-p)^{|A_\tau|} (1-q)^{|B_\tau|}
  \left(\frac{p}{1-p}\right)^{|E(G) \cap A_\tau|}
  \left(\frac{q}{1-q}\right)^{|E(G) \cap B_\tau|}.
  \label{eq:G-given-tau}
\end{align}
Consider a labelling $\tau$.
Suppose that there exist nodes $u$ and $v$ with $\tau_u = +$
and $\tau_v = -$, and such that both $u$ and $v$ have minorities with respect
to $\tau$. We will show that $\tau$ cannot be the unique
maximizer of $\Pr(G \mid \sigma = \tau)$, which will establish the lemma.

Consider the labelling $\tau'$ that is identical to $\tau$
except that $\tau'_u = -$ and $\tau'_v = +$. The fact that $u$ and $v$
both had minorities with respect to $\tau$ implies that
\begin{align*}
 |E(G) \cap A_{\tau'}| &\ge |E(G) \cap A_\tau| \\
 |E(G) \cap B_{\tau'}| &\ge |E(G) \cap B_\tau|
\end{align*}
(note that equality is possible in the inequalities above if
$u$ and $v$ are neighbors). On the other hand, the number of $+$
and $-$ labels are the same for $\tau$ and $\tau'$; hence
$|A_\tau| = |A_{\tau'}|$ and $|B_\tau| = |B_{\tau'}|$.
Looking back at~\eqref{eq:G-given-tau}, therefore, we have
\[
 \Pr(G \mid \sigma = \tau) \le \Pr(G \mid \sigma = \tau').
\]
Hence, $\tau$ cannot be the unique maximizer of $\Pr(G \mid \sigma = \tau)$.
\end{proof}

In order to argue that $P(n, p_n, q_n) = o(n^{-1})$ is necessary
for strong consistency, we need to show that if
$P(n, p_n, q_n)$ is not $o(n^{-1})$ then $(G, \sigma) \sim \calG(2n, p_n, q_n)$
has a non-vanishing chance of containing nodes of both labels with minorities.

Suppose that $P(n, p_n, q_n)$ is not $o(n^{-1})$. By
Proposition~\ref{prop:majorities}, there is some $\epsilon > 0$ such
that for infinitely many $n$,
$\Pr(\exists u: \text{$u$ has a minority}) \ge \epsilon$. Since $+$-labelled
nodes and $-$-labelled nodes are symmetric, there are infinitely many $n$
such that
\begin{align*}
\Pr(\exists u: \sigma_u = + \text{ and $u$ has a minority}) &\ge \epsilon / 2 \\
\Pr(\exists v: \sigma_v = - \text{ and $u$ has a minority}) &\ge \epsilon / 2.
\end{align*}
By Harris's inequality~\cite{Harris:60},
the two events above are non-negatively correlated because both
of them are monotonic events with the same directions: both are monotonic
increasing in the edges between $+$-labelled and
$-$-labelled nodes and monotonic decreasing in the other edges.
Hence, there are infinitely many $n$ for which
\[
\Pr(\exists u, v: \sigma_u = +, \sigma_v = -, \text{ $u$ and $v$ have minorities})
\ge \epsilon^2 / 4.
\]

\section{Binomial approximations}\label{sec:binomial}

In this section, we collect various technical, but not particularly
enlightening, estimates for binomial variables.
Specifically, we prove Propositions~\ref{prop:explicit-sparse}
and~\ref{prop:explicit-dense}, which give explicit characterizations
of the condition $P(n, p_n, q_n) = o(n^{-1})$ in the sparse and dense case respectively,
and Proposition~\ref{prop:large-majority} and~\ref{prop:large-majority-sparse},
which give perturbative estimates for binomial probabilities.
Our main tools are
Bernstein's inequality, Stirling's approximation and Taylor expansion.

\subsection{Characterization of sparse strong consistency}

For simplicity, in this section we write $a = a_n, b = b_n$ and $c = a + b$.
If there is a constant $C > 0$ such that $C^{-1} f \le g \le C f$ then we
write $f \asymp g$. We recall that $a, b = \Theta(1)$ and that $p n = a \log n$
and $q n = b \log n$.
Let $X \sim \Binom(n, p)$ and $Y \sim \Binom(n, q)$.

We begin with a Poisson approximation to binomials.
\begin{lemma}\label{lem:poisson}
  If $Z = X + Y$ then for every $k \le 10 c \log n$,
  \[
      \Pr(Z = k) = (1 + o(1)) n^{-c} \frac{(c \log n)^k}{k!},
  \]
  where the sequence implicit in the $o(1)$ notation is independent of $n$ and $k$.
\end{lemma}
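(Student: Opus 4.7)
The plan is to write $\Pr(Z=k)=\sum_{j=0}^{k}\Pr(X=j)\Pr(Y=k-j)$ and approximate each binomial pointwise by its Poisson counterpart with error $1+o(1)$ that is uniform in the relevant range of indices. Once we have $\Pr(X=j)=(1+o(1))n^{-a}(a\log n)^{j}/j!$ and $\Pr(Y=k-j)=(1+o(1))n^{-b}(b\log n)^{k-j}/(k-j)!$, the binomial theorem collapses the convolution into the desired Poisson mass $n^{-c}(c\log n)^{k}/k!$.

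More concretely, for any $j\le 10c\log n$ I would control the two factors in $\Pr(X=j)=\binom{n}{j}p^{j}(1-p)^{n-j}$ separately. For the binomial coefficient, write
\[
\binom{n}{j}=\frac{n^{j}}{j!}\prod_{i=0}^{j-1}\Bigl(1-\frac{i}{n}\Bigr),
\]
and note that $\sum_{i=0}^{j-1}i/n=O(j^{2}/n)=O(\log^{2}n/n)=o(1)$, so $\binom{n}{j}=(1+o(1))n^{j}/j!$ uniformly. For the tail factor, use
\[
(n-j)\log(1-p)=-(n-j)p-(n-j)p^{2}/2-\cdots,
\]
and observe that $(n-j)p=a\log n-jp$ with $jp=O(\log^{2}n/n)=o(1)$, and $(n-j)p^{2}/2\le np^{2}/2=O(\log^{2}n/n)=o(1)$, so $(1-p)^{n-j}=n^{-a}(1+o(1))$ uniformly. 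Multiplying gives $\Pr(X=j)=(1+o(1))(a\log n)^{j}n^{-a}/j!$, and the same argument applied to $Y$ gives the analogous estimate, uniformly in $k-j\le k\le 10c\log n$.

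Assembling the convolution,
\[
\Pr(Z=k)=(1+o(1))\,n^{-c}\sum_{j=0}^{k}\frac{(a\log n)^{j}}{j!}\frac{(b\log n)^{k-j}}{(k-j)!}=(1+o(1))\,n^{-c}\,\frac{(c\log n)^{k}}{k!},
\]
where the last equality is the binomial theorem applied to $(a+b)^{k}(\log n)^{k}$. The $(1+o(1))$ factor can be pulled outside the sum because the same bound controls every summand simultaneously.

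The only real point to watch is the uniformity of the error: I must make sure that the Stirling and Taylor estimates above depend on $j$ and $k-j$ only through the bound $\le 10c\log n$, so that a single $o(1)$ captures every summand. Since the error bounds I get above are in fact $O(\log^{2}n/n)$ regardless of the specific $j$, this uniformity is automatic, and that is why the hypothesis $k\le 10c\log n$ (rather than something depending on $n$ in a more subtle way) is enough. The rest is bookkeeping.
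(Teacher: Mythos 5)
Your proposal is correct and matches the paper's (sketched) proof exactly: both approximate $\Pr(X=j)$ and $\Pr(Y=k-j)$ by Poisson masses with a uniform $(1+o(1))$ error and then collapse the convolution via the binomial theorem. You have simply spelled out the ``direct computation'' that the paper leaves implicit, correctly identifying that the uniformity hinges on the $O(\log^2 n/n)$ bounds depending on $j$ only through $j\le 10c\log n$.
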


\begin{proof}[Proof (sketch)]
  By a direct computation, if $k \le 10 c \log n$ then
  \begin{align*}
    \Pr(X = k) = (1 + o(1)) n^{-a} \frac{(a \log n)^k}{k!} \\
    \Pr(Y = k) = (1 + o(1)) n^{-b} \frac{(b \log n)^k}{k!},
  \end{align*}
  and the sequences implicit in the $1 + o(1)$ notation may be taken to be
  independent of $k$. Finally, note that
  $\Pr(Z = k) = \sum_{\ell=0}^k \Pr(Y = \ell) \Pr(X = k - \ell)$.
\end{proof}

\begin{proof}[Proof of Proposition~\ref{prop:explicit-sparse}]
We first note that if $a - b \leq \epsilon = \epsilon(C)$ then strong consistency does
not hold. This follows because with constant probability we have that $X$
is less than its mean $a_n \log n$ and the probability that $Y$ is larger than
$a \log n$ is at least $n^{-1/2}$ if $\eps$ is a sufficiently small constant.

Without loss of generality, we may assume that $c \ge 1$. Indeed,
if $c < 1$ then the proposition is trivially true:
on the one hand $P(n, p_n, q_n) = \Omega(n^{-1})$
because $\Pr(X = 0)$ and $\Pr(Y = 0)$ are both $\Omega(n^{-1})$;
on the other hand, $(a + b - 2\sqrt{ab} - 1) \log n + \frac 12 \log \log n
\to -\infty$ because $a + b = c < 1$ and $\sqrt{ab}$ is bounded away from zero as $n \to \infty$.

Let $Z = X+Y$; then
\begin{align*}
\Pr(Y \geq X) &= \sum_{k = 0}^n \Pr(Z = k) \Pr(Y \geq X \mid Z = k)\\
&= \sum_{k = 0}^{10 c \log n} \Pr(Z = k) \Pr(Y \geq X \mid Z = k) + O(n^{-2}),
\end{align*}
where the second equality follows from the fact that $\Pr(Z \geq 10 c \log n) \leq O(n^{-2})$,
recalling that $c \geq 1$.

For a fixed $k \leq 10 c \log n$, we have that
\[
\Pr(Y \geq X \mid Z = k) = (1-o(1)) \Pr(\Binom(k,\eta) \geq k/2),
\]
where $\eta = \frac{b}{a+b} \le \frac 12 (1-\eps)$.
Recall that binomial tail probabilities decay exponentially fast; since $\eta \le \frac 12 (1-\eps)$,
$\Pr(\Binom(k,\eta) \ge k/2) \asymp \Pr(\Binom(k,\eta) = \lceil k/2 \rceil)$.Combining this
with Stirling's approximation we have
\[
\Pr(Y \geq X \mid Z = k) \asymp \frac{2^k}{\sqrt{k}} \eta^{k/2}(1-\eta)^{k/2} = \frac{2^k \theta^k}{\sqrt{k}},
\]
where $\theta = \sqrt{\eta(1-\eta)} = \frac{\sqrt{a b}}{a+b}$.
By Lemma~\ref{lem:poisson},
\[
\Pr(Z = k) = (1+o(1)) n^{-c} \frac{(c \log n)^k}{k!},
\]
and so Stirling's approximation for $k \ge 1$ gives
\[
\Pr(Z = k) \asymp \frac{n^{-c}}{\sqrt{k}} \frac{(c e \log n)^k}{k^k}
\]
Thus we get that
\begin{align*}
\Pr(Y \geq X) &= \Pr(Y = X = 0) + \sum_{k=1}^{10 c \log n} \Pr(Z = k) \Pr(Y \geq X \mid Z = k) +
O(n^{-2}) \\
& \asymp n^{-c} \left( 1+ \sum_{k=1}^{10 c \log n} \frac{(2 c e \theta \log n)^k}{k^{k+1}} \right),
\end{align*}
The analysis of the sum is standard, and we give a sketch. Defining $\ell(k)$ to be the
logarithm of the summand, we have
\[
\ell(k) = k \log (t \log n) - (k+1) \log k, \quad t = 2 c e \theta.
\]
Then
\[
\ell'(k) = \log (t \log n) - (1+1/k) - \log k, \quad \ell''(k) = -1/k(1+o(1)),
\]
and so the maximum is obtained around the value
\[
k^{\ast} = e^{-1} t \log n = 2 c \theta \log n.
\]
Moreover, the maximum value (up to a constant factor) of $\ell$ is
\[
\frac{(2 c e \theta \log n)^{k^{\ast}}}{k^\ast (2 c \theta \log n)^{k^{\ast}}} =
\frac{e^{k^{\ast}}}{k^{\ast}} \asymp \frac{n^{-c + 2c \theta}}{\log n} =
\frac{n^{2\sqrt{ab}}}{\log n}
\]
Since $\ell$ is approximately quadratic around its maximum and $\ell''(k^\ast) \asymp -1/\log n$,
we see that $\exp(\ell(k))$ varies by a constant factor on a window of length $\sqrt{\log n}$
around $k^\ast$, and then drops off geometrically fast beyond that window. Hence, the sum
is given (up to a constant) by $n^{2\sqrt{ab}} \log^{-1/2} n$
and so
\[
  \Pr(Y \ge X) \asymp
  \frac{n^{2\sqrt{ab} - (a+b)}}{\sqrt{\log n}}
\]
Thus $n \Pr(Y \geq X) \to 0$ if and only if
\[
(1+2 \sqrt{ab}-(a+b))\log n - \frac 12 \log \log n \to -\infty,
\]
as needed.
\end{proof}

\subsection{Characterization of dense strong consistency}
Our main tool for proving Proposition~\ref{prop:explicit-dense}
will be the following Local Central Limit Theorem. The proof is a standard
application of Stirling's approximation.
\begin{lemma}\label{lem:lclt}
Let $C > 0$ be an arbitrary constant and $Y \sim \Binom(n,q)$, where
\[
q = q_n = \omega\left(\frac{\log^3(n)}{n}\right), \quad q_n \leq \frac 23.
\]
Let $\sigma_q^2 = q(1-q)$ and let $\phi(x) = (2\pi)^{-1/2} e^{-x^2/2}$.
 Then for all integers $k$ such that $|k-nq| \leq C \sqrt{n \log n} \sigma_q$ it holds that
\[
\Pr(Y = k) = (1+o(1)) \frac{1}{\sqrt n \sigma_q}
\phi \left( \frac{k-nq}{\sqrt{n} \sigma_q} \right).
\]
Moreover,
\[
\Pr(Y = k) = (1+o(1)) \frac{1}{\sqrt n \sigma_q}
\phi \left( \frac{x-nq}{\sqrt{n} \sigma_q} \right),
\]
for every $k-1 \leq x \leq k+1$.
\end{lemma}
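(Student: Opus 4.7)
The plan is the standard route: expand $\Pr(Y=k) = \binom{n}{k} q^k (1-q)^{n-k}$ via Stirling and compare to the Gaussian density. Set $t = k - nq$, so that by hypothesis $|t| \le C\sqrt{n \log n}\,\sigma_q$. First I would observe that in this range $k$ and $n-k$ are both comparable to $nq$ and $n(1-q)$ respectively: indeed, $|t|/(nq) \le C\sqrt{(1-q)\log n/(nq)} = o(1)$ since $nq = \omega(\log^3 n)$, and similarly $|t|/(n(1-q)) = o(1)$ (using $q \le 2/3$ so $1-q \ge 1/3$). Consequently Stirling's formula applies to all three factorials and gives
\[
\binom{n}{k} = \frac{1+o(1)}{\sqrt{2\pi}} \sqrt{\frac{n}{k(n-k)}} \left(\frac{n}{k}\right)^{k} \left(\frac{n}{n-k}\right)^{n-k},
\]
with $o(1)$ uniform in $k$ over the allowed window. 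The prefactor $\sqrt{n/(k(n-k))}$ equals $(1+o(1))/(\sqrt{n}\,\sigma_q)$ because $k(n-k) = n^2 \sigma_q^2(1+o(1))$.

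Next I would analyze the exponential factor $L(t) := -k\log(1+t/(nq)) - (n-k)\log(1 - t/(n(1-q)))$ by Taylor expansion. The linear terms produce
\[
-k\cdot\frac{t}{nq} + (n-k)\cdot\frac{t}{n(1-q)} = -t - \frac{t^2}{nq} + t - \frac{t^2}{n(1-q)} = -\frac{t^2}{n\sigma_q^2},
\]
and the quadratic terms contribute $\frac{k t^2}{2(nq)^2} + \frac{(n-k)t^2}{2(n(1-q))^2} = \frac{t^2}{2n\sigma_q^2}(1+o(1))$, so that the combined leading behavior is $L(t) = -t^2/(2n\sigma_q^2) + (\text{error})$, matching the desired Gaussian exponent.

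The main bookkeeping step is to show the cubic-and-higher remainders in the two Taylor expansions are $o(1)$ uniformly over $k$. The worst terms have the form $k |t|^3/(nq)^3$ and $(n-k)|t|^3/(n(1-q))^3$, both bounded by a multiple of $|t|^3/(nq)^2$. With $|t|^2 \le C^2 n \log n\, \sigma_q^2 \le C^2 n q \log n$, we get
\[
\frac{|t|^3}{(nq)^2} \le C^3 \frac{(nq \log n)^{3/2}}{(nq)^2} = C^3 \frac{\log^{3/2} n}{\sqrt{nq}},
\]
which is $o(1)$ exactly because $nq = \omega(\log^3 n)$. This is the heart of the argument, and the hypothesis on $q_n$ is used precisely here; all other places where we need smallness of $t/(nq)$ and $t/(n(1-q))$ are handled by the same inequality.

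Combining the prefactor asymptotics with the exponential gives
\[
\Pr(Y=k) = (1+o(1))\,\frac{1}{\sqrt{n}\,\sigma_q}\,\phi\!\left(\frac{k-nq}{\sqrt{n}\,\sigma_q}\right),
\]
uniformly in $k$ with $|k-nq| \le C\sqrt{n \log n}\,\sigma_q$, which is the first assertion. For the ``moreover'' part I would use that $\phi'(y) = -y\phi(y)$, so that for $|y| \le C\sqrt{\log n}$ and a perturbation of size $\delta = 1/(\sqrt{n}\,\sigma_q)$ the relative change in $\phi$ is at most $|y|\delta(1+o(1)) = O(\sqrt{\log n/(n\sigma_q^2)}) = o(1)$, again by $n q \gg \log^3 n$; hence replacing $k$ by any $x \in [k-1,k+1]$ inside $\phi$ changes the right-hand side only by a factor $1+o(1)$.
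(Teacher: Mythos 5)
Your proposal is correct and follows essentially the same route as the paper: Stirling's formula for $\binom{n}{k}$, the identification $\sqrt{n/(k(n-k))} = (1+o(1))/(\sqrt{n}\sigma_q)$, a Taylor expansion of the logarithm of the remaining factor whose second-order term yields $-\frac{(k-nq)^2}{2n\sigma_q^2}$, and a bound on the cubic remainder using $nq = \omega(\log^3 n)$. The only cosmetic difference is that you expand in $t = k - nq$ about $t=0$ while the paper expands in $q$ about $k/n$ (so that the first-order term vanishes by construction); both bookkeeping schemes lead to the same expressions, and your handling of the "moreover" part via $\phi'(y) = -y\phi(y)$ is equivalent to the paper's direct comparison of exponents.
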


\begin{proof}
The second statement follows easily from the first one using the formula for $\phi$ and noting that
if $\delta \leq C \sqrt{n \log n} \sigma_q$ and $|\eps| \leq 1$ then
\[
\left(\frac{\delta + \eps}{\sigma_q \sqrt{n}}\right)^2 = \left( \frac{\delta}{\sigma_q \sqrt{n}} \right)^2 + o(1).
\]

To prove the first statement, we begin with Stirling's approximation.
Noting that  $k \to \infty$ as $n \to \infty$, we obtain:
\[
\Pr(Y = k) = \binom{n}{k} q^k(1-q)^{n-k} =
(1+o(1)) \frac{1}{\sqrt{2 \pi}} \sqrt{\frac{n}{k(n-k)}}
\left(\frac{n q}{k}\right)^k \left(\frac{n(1-q)}{n-k}\right)^{n-k}.
\]
We start by analyzing the term
\[
\sqrt{\frac{n}{k(n-k)}} = \frac{1}{\sqrt{n}} \sqrt{\frac{n}{k}} \sqrt{\frac{n}{n-k}}.
\]
Now
\[
k/n \in [q - C \frac{\sigma_q \sqrt{\log n}}{\sqrt{n}}, q+ C\frac{\sigma_q \sqrt{\log n}}{\sqrt{n}}]
\]
and since $q = \omega(n^{-1} \log^3 n)$ implies $\frac{\sigma_q \sqrt{\log n}}{\sqrt{n}} = o(q/\log n)$,
it follows that $n/k = (1+o(1/\log n)) \frac 1q$. Similarly,
$\frac{n}{n-k} = (1 + o(1/\log n)) \frac{1}{1-q}$ and so
\begin{equation}\label{eq:poly-term}
\sqrt{\frac{n}{k(n-k)}} = (1+o(1/\log n)) \frac{1}{\sigma_q \sqrt{n}}.
\end{equation}
Next, we use Taylor expansion around $nq = k$. The first-order term vanishes
and we have
\begin{multline}
\log \left( \left(\frac{n q}{k}\right)^k \left(\frac{n(1-q)}{n-k}\right)^{n-k} \right) \\
\begin{aligned}
 &= -\frac{1}{2} (k-nq)^2 \left(\frac{1}{k} + \frac{1}{n-k}\right)
 + O(|nq-k|^3)\left(\frac{1}{k^2}+\frac{1}{(n-k)^2}\right)
\\ &= - \frac{n}{2k(n-k)} (k-nq)^2 + o(1),
\label{eq:exp-term}
\end{aligned}
\end{multline}
where the last equality uses the fact that
\[
\frac{(nq-k)^3}{\min\{k^2,(n-k)^2\}} \to 0,
\]
which follows from the assumption that $q = \omega(n^{-1}\log^3(n))$.
Now, from~\eqref{eq:poly-term} we have
$\frac{n}{k(n-k)} = (1 + o(1/\log n)) \frac{1}{n \sigma_q^2}$.
Since $(k-nq)^2 = O(\sigma_q^2 n \log n)$, we have
\[
\frac{n}{k(n-k)} (k-nq)^2 = \frac{(k-nq)^2}{n \sigma_q^2} + o(1).
\]
Going back to~\eqref{eq:exp-term}, we have
\[
\log \left( \left(\frac{n q}{k}\right)^k \left(\frac{n(1-q)}{n-k}\right)^{n-k} \right)
= - \frac{(k-nq)^2}{2n\sigma_q^2} + o(1).
\]
The proof follows by combining this with~\eqref{eq:poly-term}
and Stirling's approximation for $\Pr(Y = k)$.
\end{proof}

\begin{proof}[Proof of Proposition~\ref{prop:explicit-dense}]
The second and third conditions are clearly equivalent; we will show
the equivalence of the first two.

Bernstein's inequality implies that
\[
\Pr(|Y-\E Y| \geq 4 \sqrt{n \log n} \sigma_q) = o(n^{-1}),
\Pr(|X-\E X| \geq 4 \sqrt{n \log n} \sigma_p) = o(n^{-1}).
\]
So writing $b_q = 5 \sqrt{n \log n} \sigma_q$ and $b_p = 5 \sqrt{n \log n} \sigma_p$ we have:
\[
\Pr(Y \geq X) =
\sum_{k = \lfloor np - b_p \rfloor}^{\lceil np + b_p \rceil} \sum_{\ell = \lfloor nq - b_q\rfloor}^{\lceil nq + b_q \rceil} 1_{\{k \leq \ell\}}
\Pr(X = k) \Pr(Y = \ell) +o(n^{-1})
\]
Using Lemma~\ref{lem:lclt} for every $k,\ell$ in the range above we have:
\[
\Pr(X = k) \Pr(Y = \ell) = (1+o(1)) \frac{1}{n \sigma_p \sigma_q}
\int_{\Delta(k,\ell)} \phi \left( \frac{y-nq}{\sqrt{n} \sigma_q} \right)
\phi \left( \frac{x-np}{\sqrt{n} \sigma_p} \right) dx dy,
\]
where $\Delta(k,\ell) = (k,\ell) + \Delta$ where
\[
\Delta = \{ (x,y) : 0 \leq y \leq 1, \ y-1 \le x \le y \}
\]
is a parallelogram of unit area. (In applying Lemma~\ref{lem:lclt}
  note that $(x, y) \in \Delta(k, \ell)$ implies that
$|x - k| \le 1$ and $|y - \ell| \le 1$.)
Thus
\[
\Pr(Y \geq X) =
(1+o(1)) \int_{np - b_p}^{np + b_p} \int_{nq - b_p}^{nq + b_q}
1{\{x \le y\}}
\phi \left( \frac{y-nq}{\sqrt{n} \sigma_q} \right)
\phi \left( \frac{x-np}{\sqrt{n} \sigma_p} \right) dy dx + o(n^{-1}),
\]
where we use the fact that the difference between the union of $\Delta(k,\ell)$ and
the integration region above is contained in the set where either
$|y-nq| \geq 4 \sqrt{n \log n} \sigma_q$ or $|x-np| \geq 4 \sqrt{n \log n} \sigma_p$.
Changing variables we see that the last expression is nothing but
\[
\Pr\left( |M| \leq 5 \sqrt{n \log n}, \ |N| \leq 5 \sqrt{n \log n}, \ 
\sigma_q M \geq \sqrt{n}(p-q) + \sigma_p N\right),
\]
Where $M,N \sim \normal(0,1)$ are independent. The proof follows.
\end{proof}

\subsection{Perturbation estimates for dense binomials}

The main approximation that we use to prove
Proposition~\ref{prop:large-majority} is the following:
\begin{lemma}\label{lem:ratio}
  If $X \sim \Binom(m, p)$ then for any $k$ and $\ell$,
 \[
  \log \frac{\Pr(X = k + \ell)}{\Pr(X = k)} \le
  \ell \log \frac{mp}{k+1} + \ell \log \frac{m-k}{m-mp}.
 \]
\end{lemma}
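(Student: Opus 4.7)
The plan is to reduce the claim to a straightforward factor-by-factor bound using the explicit form of the binomial pmf. Assume without loss of generality that $\ell \ge 0$ (the case $\ell < 0$ follows by relabeling $k \mapsto k+\ell$ and $\ell \mapsto -\ell$). Writing $\Pr(X = j) = \binom{m}{j} p^j (1-p)^{m-j}$ and telescoping through consecutive values gives the product formula
\[
\frac{\Pr(X = k+\ell)}{\Pr(X = k)} = \prod_{j=0}^{\ell-1} \frac{(m-k-j)\, p}{(k+1+j)(1-p)}.
\]

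The key observation is that each factor is maximized at $j = 0$: the numerator $(m-k-j)$ is non-increasing in $j$ while the denominator $(k+1+j)$ is non-decreasing in $j$. Hence every term of the product is bounded above by
\[
\frac{(m-k)\, p}{(k+1)(1-p)} \;=\; \frac{mp}{k+1} \cdot \frac{m-k}{m-mp},
\]
where the equality uses $m - mp = m(1-p)$, so that the factor of $m$ cancels. Taking logarithms of the product now gives
\[
\log \frac{\Pr(X = k+\ell)}{\Pr(X = k)} \;\le\; \ell \log \frac{mp}{k+1} + \ell \log \frac{m-k}{m-mp},
\]
which is exactly the claim.

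There is essentially no obstacle to this proof: the entire content is the explicit product formula together with the trivial monotonicity of $j \mapsto m-k-j$ and $j \mapsto k+1+j$. The only place where one must be a little careful is the algebraic identity rewriting $\log\frac{(m-k)p}{(k+1)(1-p)}$ as the sum of the two logarithms appearing in the statement; this is done precisely so that later applications can separately estimate the two ratios $\frac{mp}{k+1}$ and $\frac{m-k}{m-mp}$ using concentration of $X$ around $mp$.
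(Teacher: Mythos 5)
Your proof for $\ell \ge 0$ is correct and essentially identical to the paper's: both write the ratio $\Pr(X=k+\ell)/\Pr(X=k)$ as a telescoping product of consecutive-step ratios and bound each factor by its value at the first step; you work multiplicatively where the paper works with a sum of logarithms, but the content is the same.

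The parenthetical reduction of the case $\ell < 0$ via the relabeling $k \mapsto k+\ell$, $\ell \mapsto -\ell$ is not correct, though. Applying the statement at $(k',\ell')=(k+\ell,-\ell)$ and rearranging gives a \emph{lower} bound on $\log\frac{\Pr(X=k+\ell)}{\Pr(X=k)}$ with $k+\ell+1$ and $m-k-\ell$ appearing where the claimed inequality has $k+1$ and $m-k$; this does not imply the stated upper bound (and for $\ell<0$, $m>0$ the two bounds are actually ordered the wrong way around). The inequality does hold for $\ell<0$, but the correct argument is the mirror of the positive case: each factor of $\Pr(X=k)/\Pr(X=k+\ell)=\prod_{i=1}^{-\ell}\frac{(m-k+i)p}{(k-i+1)(1-p)}$ is bounded below by $\frac{(m-k)p}{(k+1)(1-p)}$. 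Since the paper invokes Lemma~\ref{lem:ratio} only with $\ell\ge 1$, this slip is harmless in context, but the remark should be corrected or the lemma simply restricted to $\ell\ge 0$.
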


\begin{proof}
 We compute
 \begin{align*}
  \log \frac {\Pr(X = k + \ell)}{\Pr(X = k)}
  &= \log \frac {\binom{m}{k+\ell} p^\ell} {\binom mk (1-p)^\ell} \\
  &= \ell \log \frac{p}{1-p} + \sum_{i=1}^\ell (\log(m-k-i+1) - \log(k+i)) \\
  &\le \ell \log \frac{p}{1-p} + \ell \log(m-k) - \ell \log(k+1) \\
  &= \ell \log \frac{mp}{k+1} + \ell \log \frac{m-k}{m-mp}.
  \qedhere
 \end{align*}
\end{proof}

\begin{proof}[Proof of Proposition~\ref{prop:large-majority}]
 Fix $\ell$ with $1 \le \ell \le \sqrt{mp \log m}$.
 We will focus on the proof of~\eqref{eq:large-majority-neg},
 since the proof of~\eqref{eq:large-majority-pos} is analogous.
 We may write
 \[
  \Pr(Y \ge X - \ell)
  = \sum_{k=-\ell}^m \Pr(Y \ge k) \Pr(X = k + \ell).
 \]
 Now, Bernstein's inequality implies that by incurring
 a cost of $2m^{-2}$, we may restrict the sum to those $k$ for which
 $mp - 3\sqrt{mp \log m} \le k + \ell \le mp + 3\sqrt{mp \log m}$.
 Since $\ell \le \sqrt{mp \log m}$, it suffices to take
 $mp - 4\sqrt{mp \log m} \le k \le mp + 4\sqrt{mp \log m}$. Hence,
 \begin{equation}\label{eq:constant-majority-1}
  \Pr(Y \ge X - \ell)
  \le \sum_{k=\lfloor mp - 4\sqrt{mp \log m}\rfloor}
	  ^{\lceil mp + 4\sqrt{mp \log m}\rceil}
	  \Pr(Y \ge k) \Pr(X = k + \ell) + 2m^{-2}.
 \end{equation}
 Now, under the assumption $mp \ge 64 \log m$, we have
 $mp - 4\sqrt{mp \log m} \ge mp/2$ and
 $mp + 4\sqrt{mp \log m} \le 3mp/2$.
 Consider the first term in the upper bound of Lemma~\ref{lem:ratio}:
 \begin{equation}\label{eq:ratio-term-1}
  \log \frac{mp}{k+1}
  \le \frac{|k+1-mp|}{\min\{k+1,mp\}}
  \le 16 \sqrt{\frac{\log m}{mp}}
 \end{equation}
 where the last inequality used $|k - mp| \le 4 \sqrt{mp \log m}$ and
 $k \ge mp/2$. The other term in the upper bound of Lemma~\ref{lem:ratio} is similar:
 \begin{equation}\label{eq:ratio-term-2}
     \log \frac{m-k}{m-mp} \le \frac{|k-mp|}{\min\{m-mp, m-k\}}
     \le C \sqrt{\frac{\log m}{mp}}
 \end{equation}
 for sufficiently large $m$,
 where the second inequality follows by lower-bounding both terms in the
 denominator: $p \le 2/3$ implies
 $m - mp \ge 2mp$ and $k \le mp + 4 \sqrt{mp \log m}$ implies
 $m - k \ge cmp$  for some $c > 0$ and sufficiently large $m$
 (this follows by considering the cases $p \in [2^{-10}, 2/3]$ and
 $p \in [64 m^{-1} \log m, 2^{-10}]$ separately).
 Combining~\eqref{eq:ratio-term-1} and~\eqref{eq:ratio-term-2}
 with Lemma~\ref{lem:ratio}, we obtain
 \begin{equation}\label{eq:compare-probs}
  \log \frac{\Pr(X = k+\ell)}{\Pr(X = k)}
  \le C \ell \sqrt{\frac{\log m}{mp}}.
 \end{equation}
 Applying this to~\eqref{eq:constant-majority-1}, we have
 \begin{align*}
  \Pr(Y \ge X - \ell)
  &\le \exp\left(C \ell \sqrt{\frac{\log m}{mp}} \right)
      \sum_{k=\lfloor mp - 4 \sqrt{mp \log m}\rfloor}
	  ^{\lceil mp + 4 \sqrt{mp \log m}\rceil}
	  \Pr(Y \ge k) \Pr(X = k) + 2m^{-2} \\
  &\le \Pr(Y \ge X) \exp\left(C \ell \sqrt{\frac{\log m}{mp}}\right) + 2m^{-2}.
 \end{align*}

 The lower bound (i.e.~\eqref{eq:large-majority-pos})
 is essentially the same, and we give only a sketch: we write
 \begin{align*}
   \Pr(Y \ge X + \ell) \ge \sum_{k=\lfloor mp - 4 \sqrt{mp \log m}\rfloor}^{\lceil mp + 4 \sqrt{mp \log m}\rceil}
   \Pr(Y \ge k + \ell) \Pr(X = k).
 \end{align*}
 We then use~\eqref{eq:compare-probs} to compare $\Pr(X = k)$ with
 $\Pr(X = k + \ell)$. This leaves us with a sum over $k \in mp \pm 4 \sqrt{mp \log m}$,
 which we compare with the full sum using Bernstein's inequality (picking up
 an additive $2m^{-2}$ term).
\end{proof}

\subsection{Perturbation estimates for sparse binomials}

The sparse case needs a slightly different argument and
has slightly worse bounds. We have the following analogue
of Lemma~\ref{lem:ratio}:

\begin{lemma}\label{lem:ratio-sparse}
 If $mp \le 128 \log m$ and $k = o(m)$ then for sufficiently large $m$ and any $\ell \ge 1$,
 \[
  \log \frac {\Pr(X = k + \ell)}{\Pr(X = k)}
  \le \ell \log \frac{mp}{\ell} + 2\ell
 \]
\end{lemma}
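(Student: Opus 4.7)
The plan is to bound the ratio $\Pr(X = k+\ell)/\Pr(X = k)$ directly, rather than going through the $\ell = 1$ telescoping estimate used in Lemma~\ref{lem:ratio}. Writing out the binomial probabilities, one has
\[
\frac{\Pr(X = k+\ell)}{\Pr(X = k)}
= \prod_{i=0}^{\ell-1} \frac{m-k-i}{k+i+1}\,\left(\frac{p}{1-p}\right)^{\!\ell},
\]
and the natural crude bounds are $m-k-i \le m$ in each numerator factor and $k+i+1 \ge i+1$ in each denominator factor. This gives
\[
\frac{\Pr(X = k+\ell)}{\Pr(X = k)} \le \frac{m^\ell}{\ell!}\left(\frac{p}{1-p}\right)^{\!\ell}.
\]
(If $k+\ell > m$ the left side is zero and the lemma is trivial, so I may assume $k+\ell \le m$.)

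Next I would simplify the $p/(1-p)$ factor using the assumption $mp \le 128 \log m$: for $m$ large enough this forces $p \le 1/2$, so $p/(1-p) \le 2p$, and the bound becomes
\[
\frac{\Pr(X = k+\ell)}{\Pr(X = k)} \le \frac{(2mp)^\ell}{\ell!}.
\]
Finally I would apply the standard Stirling lower bound $\ell! \ge (\ell/e)^\ell$ to replace $1/\ell!$ by $(e/\ell)^\ell$, obtaining
\[
\frac{\Pr(X = k+\ell)}{\Pr(X = k)} \le \left(\frac{2e\,mp}{\ell}\right)^{\!\ell}.
\]
Taking logarithms gives $\ell\log(mp/\ell) + \ell(1+\log 2)$, and since $1+\log 2 < 2$ this is at most $\ell\log(mp/\ell) + 2\ell$, as claimed.

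There is essentially no obstacle here — this is a routine Stirling/ratio estimate. The only thing worth noting is that the hypothesis $k = o(m)$ in the lemma statement is not actually used by the argument above; all one really needs is $k+\ell \le m$ (and $mp$ small, to absorb the $(1-p)^{-\ell}$ factor into a constant). The reason the sparse bound is inevitably weaker than Lemma~\ref{lem:ratio} is structural: in the sparse regime $mp \asymp \log m$, so the individual factor $mp/(k+1)$ is not close to $1$ for typical $k$ near the mode, and one cannot iterate the one-step estimate $\ell$ times without loss. Bounding the whole ratio at once through $\ell!$ is what captures the correct $(mp/\ell)^\ell$ behavior.
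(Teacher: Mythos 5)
Your proof is correct, and it is genuinely a different (and in some ways cleaner) route than the one the paper takes, though both are short computations in the same spirit. The paper starts from the same telescoping expansion used in Lemma~\ref{lem:ratio}, bounds $m-k-i+1 \le m-k$, and then lower-bounds $\sum_{i=1}^\ell \log(k+i)$ by the integral $\int_k^{k+\ell}\log x\,dx$, arriving at
\[
\log\frac{\Pr(X=k+\ell)}{\Pr(X=k)} \le \ell\log\frac{mp}{k+\ell} + \ell\log\frac{m-k}{m-mp} + \ell,
\]
and then uses $k = o(m)$ and $mp = o(m)$ to kill the middle term for large $m$. You instead bound each denominator factor $k+i+1 \ge i+1$, producing a clean $\ell!$, absorb $(1-p)^{-\ell}$ into a factor $2^\ell$ via $p \le 1/2$, and finish with Stirling's $\ell! \ge (\ell/e)^\ell$. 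The two routes are essentially the same idea (an integral comparison versus Stirling amount to the same thing when $k = 0$), but your version has a real structural advantage: by dropping $k$ entirely in both the numerator factors ($m-k-i \le m$) and the denominator factors ($k+i+1 \ge i+1$), you eliminate the need for the hypothesis $k = o(m)$, which the paper's proof genuinely uses to control $\log\bigl((m-k)/(m-mp)\bigr)$. Your observation that this hypothesis is unnecessary (only $k+\ell \le m$ and the sparsity of $p$ matter) is accurate and is a modest but real improvement in generality. Your closing remark about \emph{why} the sparse bound must be weaker than Lemma~\ref{lem:ratio} is also on point: in the dense regime the one-step ratio is multiplicatively close to $1$ near the mode, whereas in the sparse regime it is not, so one must treat the $\ell$-step ratio as a whole.
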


\begin{proof}
 As in the proof of Lemma~\ref{lem:ratio}, we compute
 \begin{align*}
  \log \frac {\Pr(X = k + \ell)}{\Pr(X = k)}
  &= \ell \log \frac{p}{1-p} + \sum_{i=1}^\ell (\log(m-k-i+1) - \log(k+i)) \\
  &\le \ell \log \frac{p}{1-p} + \ell \log(m-k) - \sum_{i=1}^\ell \log(k+i).
 \end{align*}
 This time, we will use a sharper bound on the sum:
 since the logarithm is an increasing function,
 \begin{align*}
  \sum_{i=1}^\ell \log(k+i)
  &\ge \int_k^{k+\ell} \log(x) \, dx \\
  &= (k+\ell) \log(k+\ell) - (k+\ell) - k \log k + k \\
  &\ge \ell \log(k+\ell) - \ell.
 \end{align*}
 Hence, we obtain
 \[
  \log \frac {\Pr(X = k + \ell)}{\Pr(X = k)}
  \le \ell \log \frac{mp}{k+\ell} + \ell \log\frac{m-k}{m-mp} + \ell.
 \]
 Since $k$ and $mp$ are $o(m)$, $\log((m-k)/(m-mp)) = o(1)$,
 and so
 \[
  \log \frac {\Pr(X = k + \ell)}{\Pr(X = k)}
  \le \ell \log \frac{mp}{\ell} + 2\ell
 \]
 for sufficiently large $m$.
\end{proof}

\begin{proof}[Proof of Proposition~\ref{prop:large-majority-sparse}]
 This proof is similar to the proof of Proposition~\ref{prop:large-majority},
 but with Lemma~\ref{lem:ratio-sparse} instead of Lemma~\ref{lem:ratio}
 and some slightly different truncations:
 we write
 \[
   \Pr(Y \ge X - \ell) = \Pr(X \le \ell - 1)
   + \sum_{k=0}^m \Pr(Y \ge k) \Pr(X = k + \ell)
 \]
 By Bernstein's inequality, we may truncate the sum at $\sqrt m$
 at the cost of an additive $e^{-c \sqrt m}$ term.
 We apply the inequality
 \[
     \frac{\Pr(X = k + \ell)}{\Pr(X = k)} 
     \le \left(\frac{e^2 mp}{\ell}\right)^\ell
     \le \left(\frac{C \log m}{\ell}\right)^\ell
 \]
 (which follows from Lemma~\ref{lem:ratio-sparse}) to each term
 in the sum, yielding
 \[
   \sum_{k=0}^m \Pr(Y \ge k) \Pr(X = k + \ell)
   \le \left(\frac {C \log m}{\ell}\right)^\ell \Pr(Y \ge X) + e^{-c\sqrt m}.
 \]

 We may also apply
 Lemma~\ref{lem:ratio-sparse} to bound the term $\Pr(X \le \ell - 1)$, using
 \begin{align*}
   \Pr(X \le \ell - 1)
   &= \sum_{s=0}^{\ell - 1} \Pr(X = s) \\
   &\le \sum_{s=0}^{\ell - 1} \left(\frac{C \log m}{s}\right)^s \Pr(X = 0) \\
   &\le \ell \left(\frac{C \log m}{\ell}\right)^{\ell} \Pr(X = 0) \\
   &\le \left(\frac{C \log m}{\ell}\right)^{C\ell} \Pr(X = 0),
 \end{align*}
 where the second inequality follows (assuming $C \ge e$)
 because $(ey/x)^x$ is an increasing function of $x$ for $x \le y$.
 Putting everything together,
 \[
   \Pr(Y \ge X - \ell) \le 
    \left(\frac{C \log m}{\ell}\right)^{C\ell} \Pr(X = 0)
    + \left(\frac{C \log m}{\ell}\right)^{\ell} \Pr(Y \ge X)
    + e^{-c\sqrt m}.
 \]
 Finally, note that $\Pr(X = 0) \le \Pr(Y \ge X)$ so that the first
 two terms above may be combined at the cost of increasing $C$. For
 the additive term $e^{-c \sqrt m}$, note that $mp \le 128 \log m$
 implies that $\Pr(Y \ge X) \ge \Pr(X = 0) = \Omega(n^{-\alpha})$ for some
 constant $\alpha$, and so $e^{-c \sqrt m}$ may also be absorbed into
 the main term at the cost of increasing $C$.
\end{proof}

\section{Erratum}

The published version of this paper contained a mistake; we are grateful to Jan van Waaij for
pointing it out.

The statement of Lemma~\ref{lem:MAP-minority} is incorrect; the error in the proof was introduced in the inequality
\[
    |E(G) \cap A_{\tau'}| \ge |E(G) \cap A_\tau|,
\]
which does not hold under the assumption of Lemma~\ref{lem:MAP-minority}.
To formulate a correct version, we introduce the notion of a strict minority:
\begin{definition}
    Given a labelled graph $(G, \sigma)$, we say that $v$ has a strict minority if either
    \[
        p > q \text{ and } \#\{u \sim v: \sigma_u = \sigma_v\} < \#\{u \sim v: \sigma_u \ne \sigma_v\} 
    \]
    or
    \[
        p < q \text{ and } \#\{u \sim v: \sigma_u \ne \sigma_v\} < \#\{u \sim v: \sigma_u = \sigma_v\} .
    \]
\end{definition}

Here is a corrected version of Lemma~\ref{lem:MAP-minority} (using the notation of Lemma~\ref{lem:MAP-minority}):

\begin{lemma}\label{lem:MAP-minority-correct}
    If there is a unique maximal $\hat \sigma$ then with respect to $\hat \sigma$ then there cannot
    be both a $+$-labelled node $u$ and a $-$-labelled node $v$ such that either
    \begin{enumerate}
        \item $u$ and $v$ both have strict minorities, or
        \item $u$ and $v$ are non-adjacent and both have minorities.
    \end{enumerate}
\end{lemma}

\begin{proof}
    The proof of Lemma~\ref{lem:MAP-minority-correct} is essentially the same as the proof
    of Lemma~\ref{lem:MAP-minority}, except that the strengthened assumption
    means that the problematic inequality is now true.
    As before, assume that $p > q$, let $u$ and $v$ be any nodes with $\tau_u = +$ and $\tau_v = -$,
    let $A_\tau = \{\{u, v\}: \tau_u = \tau_v\}$
    and let $\tau'$ be the labelling obtained from $\tau$ by swapping the labels of $u$ and $v$.
    We need to show that under either of the two conditions in Lemma~\ref{lem:MAP-minority-correct},
    $|E(G) \cap A_{\tau'}| \ge |E(G) \cap A_\tau|$.

    The sets $E(G) \cap A_{\tau'}$ and $E(G) \cap A_\tau$
    differ only among edges that are incident to either $u$ or $v$, so it suffices to consider such edges, of which there are five types:
    \begin{enumerate}[a)]
        \item if $w \sim u$ has $\tau_w = +$ then $\{u, w\} \in A_\tau$ but not $A_{\tau'}$;
        \item if $w \sim u$, $w \ne v$ has $\tau_w = -$ then $\{u, w\} \in A_{\tau'}$ but not $A_\tau$;
        \item if $w \sim v$ has $\tau_w = -$ then $\{v, w\} \in A_\tau$ but not $A_{\tau'}$;
        \item if $w \sim v$, $w \ne u$ has $\tau_w = +$ then $\{v, w\} \in A_{\tau'}$ but not $A_\tau$;
        \item $\{u, v\}$ belongs to neither $A_\tau$ nor $A_{\tau'}$.
    \end{enumerate}
    Let $N_a$ through $N_e$ be the number of edges of $G$ corresponding to each of the types above.
    Then $|E(G) \cap A_{\tau'}| - |E(G) \cap A_\tau| = N_b + N_d - N_a - N_c$.
    Note that $N_e$ is either zero or one, and it is one if and only if $\{u, v\} \in E(G)$, and 
    note also that $u$ has a minority if and only if $N_a \le N_b + N_e$, while $u$
    has a strict minority if and only if $N_a \le N_b + N_e - 1$ (and similarly for $v$).
    Hence, if $u$ and $v$ both have strict minorities then
    \[
        |E(G) \cap A_{\tau'}| - |E(G) \cap A_\tau| = N_b + N_d - N_a - N_c \ge 2 - 2 N_e \ge 0,
    \]
    while if $u$ and $v$ both have minorities and are non-adjacent then
    \[
        |E(G) \cap A_{\tau'}| - |E(G) \cap A_\tau| = N_b + N_d - N_a - N_c \ge - 2 N_e = 0.
    \]
    Hence, in either case we have established that $|E(G) \cap A_{\tau'}| \ge |E(G) \cap A_\tau|$.

    Finally, note that if $B_\tau = \{\{u,v\}: \tau_u \ne \tau_v\}$ then $|E \cap B_\tau| = |E| - |E \cap A_\tau|$ and so~\eqref{eq:G-given-tau}
    implies that
    \[
        \Pr(G \mid \sigma = \tau) = (1-p)^{|A_\tau|} q^{|E(G)|} (1-q)^{|B_\tau| - |E(G)|} \left(
            \frac{\quad\frac{p}{1-p}\quad}{\frac{q}{1-q}}
        \right)^{|E(G) \cap A_\tau|}.
    \]
    If it were possible to increase $|E(G) \cap A_\tau|$ while maintaining $|E(G)|$, $|A_\tau|$, and $|B_\tau|$, $\tau$
    could not have been the unique maximum a posteriori estimator.
\end{proof}

Since the incorrect Lemma~\ref{lem:MAP-minority} was used to prove that
$P(n, p_n, q_n) = o(n^{-1})$ is necessary for strong consistency, we will now show
how Lemma~\ref{lem:MAP-minority-correct} can be used for the same purpose.
So, for the rest of the section we fix some $\epsilon > 0$ and assume (after passing
to a subsequence of $n$, if necessary) that $P(n, p_n, q_n) \ge \epsilon n^{-1}$.
We will divide the proof into a sparse case and a dense case. In the sparse case,
we show that there is a pair of non-adjacent minorities:

\begin{lemma}\label{lem:erratum-sparse}
    If $P(n, p_n, q_n) \ge \epsilon n^{-1}$ and $n p_n \le 64 \log n$ for
    infinitely many $n$ then with asymptotically positive probability there is
    a non-adjacent pair $u$, $v$ of nodes such that $\sigma_u = +$, $\sigma_v =
    -$, and $u$ and $v$ have minorities.
\end{lemma}

\begin{proof}
    For any set $S = S_n$ of at least $n/4$ vertices, let $N_S$ be the number
    of nodes in $S$ with a minority. The proof of Proposition~\ref{prop:majorities}
    shows that if $\alpha = \Pr(u \text{ is a minority}) \ge \epsilon n^{-1}$
    for all $n$ then $\E N_S = \alpha |S| \ge \epsilon / 4$ and $\Var(N_S) \le C \epsilon$;
    it follows from the Paley-Zygmund inequality that there is some $\delta$ such
    that for any $S = S_n$ with $|S| \ge n/4$,
    \[
        \Pr(\exists u \in S: u \text{ is a minority}) \ge \delta
    \]
    for all $n$.

    We will divide $\{u: \sigma_u = +\}$ into three sets $S_{1,+}, S_{2,+}$,
    and $S_{3,+}$, each of size at least $n/4$; similarly, we divide $\{u: \sigma_u = -\}$
    into $S_{1,-}$, $S_{2,-}$, and $S_{3,-}$. For each of these six sets $S_{i,j}$,
    $\Pr(\exists u \in S_{i,j}: u \text{ is a minority}) \ge \delta$.
    Next, note that the event
    that $u$ has a minority is (in the sense of Harris~\cite{Harris:60})
    monotone increasing in the edges between $+$-labelled and $-$-labelled nodes
    and monotone decreasing in the other edges. It follows from Harris's
    inequality~\cite{Harris:60} that any such events are non-negatively correlated.
    In particular, with probability at least $\delta^6$, every $S_{i,j}$ contains a
    node with a minority ($u_{i,j}$, say).
    
    We will complete the proof by showing that a.a.s.\ it is not the case that
    every $u_{i,+}$ is connected to every $u_{k,-}$.  Indeed, if every
    $u_{i,+}$ is connected to every $u_{k,-}$ then the graph $G$ contains a
    subgraph isomorphic to $K_{3,3}$ (the complete bipartite graph).  However,
    the random graph $G$ is stochastically dominated by the Erd\H{o}s-R\'enyi
    graph $\calG(n,64 n^{-1} \log n)$, and it is well-known (for example, by
    the first moment method) that such a graph a.a.s.\ does not contain a copy
    of $K_{3,3}$.
\end{proof}

To complete the proof we consider the dense case, where we prove that there is a pair of strict minorities:

\begin{lemma}\label{lem:erratum-dense}
    If $P(n, p_n, q_n) \ge \epsilon n^{-1}$ and $np_n \ge 64 \log n$ infinitely
    often then with asymptotically positive probability there are a pair $u$
    and $v$ with opposite labels and strict minorities.
\end{lemma}

\begin{proof}
    Fix a node $u$ with label $+$. Let $X$ and $Y$ be the number of
    $+$-neighbors and $-$-neighbors of $u$ respectively. The event $\{Y \ge
    X\}$ is the event that $u$ has a minority, and the event $\{Y \ge X + 1\}$
    is the event that $u$ has a strict minority.
    By~\eqref{eq:large-majority-pos}, the probability that $u$ has a strict
    minority is at least $\delta n^{-1}$, for some $\delta$ depending on
    $\epsilon$.  It follows that if $N_+$ is the number of $+$-labelled
    vertices with strict minorities, then $\E N_+ \ge \delta$.
    
    In order to
    prove that $\Pr(N_+ \ge 1)$ is bounded away from zero, it suffices to prove that
    $\Var(N_+) \le C (\E N_+)^2$ for some constant $C$. The proof of this is
    essentially the same as the proof of Proposition~\ref{prop:majorities},
    except that we need to consider strict minorities instead of non-strict
    minorities. (Also, we need only consider the dense case.) Since this
    is very similar to the existing argument, we will only give a sketch.
    The key is to prove that the events $\{u \text{ has a strict minority}\}$
    and $\{v \text{ has a strict minority}\}$ are approximately independent.
    If we define $\tilde P(m, n, p, q)$ by
    \[
        \tilde P(m, n, p, q) = \Pr(Y > X) \text{ for $X \sim \Binom(m, p)$, $Y \sim \Binom(n, q)$}
    \]
    then this amounts to proving an analogue of Lemma~\ref{lem:one-less} with
    $\tilde P$ instead of $P$. Since we are interested only in the dense case,
    this requires an analogue of~\eqref{eq:large-majority-pos} with strict inequalities.
    But this just follows from the existing formulation of~\eqref{eq:large-majority-pos},
    since
    \begin{align*}
        \Pr(Y > X + \ell)
        &= \Pr(Y \ge X + \ell + 1) \\
        &\ge \Pr(Y \ge X) e^{\left(-C (\ell+1) \sqrt{\frac{\log m}{mp}}\right)} - 2 m^{-2} \\
        &\ge \Pr(Y > X) e^{\left(-C (\ell+1) \sqrt{\frac{\log m}{mp}}\right)} - 2 m^{-2}
    \end{align*}
    and the difference between $\ell+1$ and $\ell$ can be absorbed into the constant $C$.

    Once we have established that $\Pr(N_+ \ge 1)$ is bounded away from zero,
    it follows by symmetry that $\Pr(N_- \ge 1)$ is bounded away from zero (where $N_-$ is the number of $-$-labelled nodes with a minority). As in the proof of Lemma~\ref{lem:erratum-sparse}, Harris's inequality implies that $\Pr(N_+ \ge 1 \text{ and } N_- \ge 1) \ge \Pr(N_+ \ge 1) \Pr(N_- \ge 1)$, and then it follows that with asymptotically positive probability there are strict minorities with both $+$ and $-$ labels.
\end{proof}

Finally, the proof that $P(n, p_n, q_n) = o(n^{-1})$ is necessary for strong
consistency follows by combining Lemma~\ref{lem:MAP-minority-correct} with Lemma~\ref{lem:erratum-sparse}
in the sparse case, or with Lemma~\ref{lem:erratum-dense} in the dense case.

\bibliography{all,block-model}

\begin{thebibliography}{10}

\bibitem{AbBaHa:14}
E.~Abbe, A.~S. Bandeira, and G.~Hall.
\newblock Exact recovery in the stochastic block model.
\newblock Arxiv 1405.3267.

\bibitem{ACBL:13}
Arash~A. Amini, Aiyou Chen, Peter~J. Bickel, and Elizaveta Levina.
\newblock Pseudo-likelihood methods for community detection in large sparse
  networks.
\newblock {\em The Annals of Statistics}, 41(4):2097--2122, 08 2013.

\bibitem{BC:09}
P.J. Bickel and A.~Chen.
\newblock A nonparametric view of network models and {N}ewman-{G}irvan and
  other modularities.
\newblock {\em Proceedings of the National Academy of Sciences},
  106(50):21068--21073, 2009.

\bibitem{B:87}
R.B. Boppana.
\newblock Eigenvalues and graph bisection: An average-case analysis.
\newblock In {\em 28th Annual Symposium on Foundations of Computer Science},
  pages 280--285. IEEE, 1987.

\bibitem{BCLS:87}
T.N. Bui, S.~Chaudhuri, F.T. Leighton, and M.~Sipser.
\newblock Graph bisection algorithms with good average case behavior.
\newblock {\em Combinatorica}, 7(2):171--191, 1987.

\bibitem{CI:01}
T.~Carson and R.~Impagliazzo.
\newblock Hill-climbing finds random planted bisections.
\newblock In {\em Twelfth annual ACM-SIAM symposium on Discrete algorithms},
  pages 903--909. Society for Industrial and Applied Mathematics, 2001.

\bibitem{CO:10}
A.~Coja-Oghlan.
\newblock Graph partitioning via adaptive spectral techniques.
\newblock {\em Combinatorics, Probability and Computing}, 19(02):227--284,
  2010.

\bibitem{CK:01}
A.~Condon and R.M. Karp.
\newblock Algorithms for graph partitioning on the planted partition model.
\newblock {\em Random Structures and Algorithms}, 18(2):116--140, 2001.

\bibitem{DF:89}
M.E. Dyer and A.M. Frieze.
\newblock The solution of some random {NP}-hard problems in polynomial expected
  time.
\newblock {\em Journal of Algorithms}, 10(4):451--489, 1989.

\bibitem{ErdosRenyi:61}
Paul Erd{\H{o}}s and Alfr{\'e}d R{\'e}nyi.
\newblock On the strength of connectedness of a random graph.
\newblock {\em Acta Mathematica Hungarica}, 12(1):261--267, 1961.

\bibitem{Harris:60}
Theodore~E. Harris.
\newblock A lower bound for the critical probability in a certain percolation
  process.
\newblock In {\em Mathematical Proceedings of the Cambridge Philosophical
  Society}, volume~56, pages 13--20. Cambridge Univ. Press, 1960.

\bibitem{HLL:83}
P.W. Holland, K.B. Laskey, and S.~Leinhardt.
\newblock Stochastic blockmodels: First steps.
\newblock {\em Social Networks}, 5(2):109 -- 137, 1983.

\bibitem{JS:98}
M.~Jerrum and G.B. Sorkin.
\newblock The {M}etropolis algorithm for graph bisection.
\newblock {\em Discrete Applied Mathematics}, 82(1-3):155--175, 1998.

\bibitem{Karp:72}
R.~Karp.
\newblock Reducibility among combinatorial problems.
\newblock In R.~Miller and J.~Thatcher, editors, {\em Complexity of Computer
  Computations}, pages 85--103. Plenum Press, 1972.

\bibitem{KomlosSzemeredi:83}
J{\'a}nos Koml{\'o}s and Endre Szemer{\'e}di.
\newblock Limit distribution for the existence of hamiltonian cycles in a
  random graph.
\newblock {\em Discrete Mathematics}, 43(1):55--63, 1983.

\bibitem{KumarKannan:10}
Amit Kumar and Ravindran Kannan.
\newblock Clustering with spectral norm and the k-means algorithm.
\newblock In {\em Foundations of Computer Science (FOCS), 2010 51st Annual IEEE
  Symposium on}, pages 299--308. IEEE, 2010.

\bibitem{MaMaVi:12}
Konstantin Makarychev, Yury Makarychev, and Aravindan Vijayaraghavan.
\newblock Approximation algorithms for semi-random partitioning problems.
\newblock In {\em Proceedings of the forty-fourth annual ACM symposium on
  Theory of computing}, pages 367--384. ACM, 2012.

\bibitem{MaMaVi:14}
Konstantin Makarychev, Yury Makarychev, and Aravindan Vijayaraghavan.
\newblock Constant factor approximation for balanced cut in the pie model.
\newblock In {\em Proceedings of the 46th Annual ACM Symposium on Theory of
  Computing}, pages 41--49. ACM, 2014.

\bibitem{Massoulie:13}
Laurent Massouli{\'e}.
\newblock Community detection thresholds and the weak ramanujan property.
\newblock In {\em Proceedings of the 46th Annual ACM Symposium on Theory of
  Computing}, pages 694--703. ACM, 2014.

\bibitem{M:01}
F.~McSherry.
\newblock Spectral partitioning of random graphs.
\newblock In {\em 42nd IEEE Symposium on Foundations of Computer Science},
  pages 529--537. IEEE, 2001.

\bibitem{MoNeSl:14b}
E.~Mossel, J.~Neeman, and A.~Sly.
\newblock Belief propagation, robust reconstruction, and optimal recovery of
  block models (extended abstract).
\newblock {\em JMLR Workshop and Conference Proceedings (COLT proceedings)},
  35:1--35, 2014.
\newblock Winner of best paper award at COLT 2014.

\bibitem{MoNeSl:13}
E.~Mossel, J.~Neeman, and A.~Sly.
\newblock Stochastic block models and reconstruction.
\newblock {\em Probability Theory and Related Fields}, 2014.
\newblock (to appear).

\bibitem{MoNeSl:14}
Elchanan Mossel, Joe Neeman, and Allan Sly.
\newblock A proof of the block model threshold conjecture.
\newblock (submitted to Combinatorica), 2014.

\bibitem{NadakuditiNewman:12}
Raj~Rao Nadakuditi and Mark~EJ Newman.
\newblock Graph spectra and the detectability of community structure in
  networks.
\newblock {\em Physical Review Letters}, 108(18):188701, 2012.

\bibitem{Seginer:00}
Yoav Seginer.
\newblock The expected norm of random matrices.
\newblock {\em Combinatorics, Probability and Computing}, 9:149--166, 3 2000.

\bibitem{Vu:07}
Van~H. Vu.
\newblock Spectral norm of random matrices.
\newblock {\em Combinatorica}, 27(6):721--736, 2007.

\bibitem{YunProutiere:14}
Se-Young Yun and Alexandre Proutiere.
\newblock Community detection via random and adaptive sampling.
\newblock {\em arXiv preprint \texttt{arXiv:1402.3072}}, 2014.

\end{thebibliography}
\bibliographystyle{plain}
\end{document}